\begin{document}
\title{Local laws for polynomials of Wigner matrices}

 \author{
L\'aszl\'o Erd\H{o}s\footnote{\hspace{0.15cm}Partially funded by ERC Advanced Grant RANMAT No. 338804} 
\\
{\small \begin{tabular}{c}{IST Austria}\\{lerdos@ist.ac.at} \end{tabular}} 
\and Torben Kr\"uger\footnote{\hspace{0.15cm}Partially supported by the Hausdorff Center for Mathematics \newline  Date: \today }
\addtocounter{footnote}{-2}\addtocounter{Hfootnote}{-2}\\ 
{\small \begin{tabular}{c} University of Bonn \\ torben.krueger@uni-bonn.de \end{tabular}}
\and Yuriy Nemish\footnotemark\\
{\small \begin{tabular}{c} IST Austria\\ yuriy.nemish@ist.ac.at\end{tabular}}
}
\date{} 

\maketitle
\thispagestyle{empty} 

\begin{abstract}  
  We consider general self-adjoint polynomials in several independent random matrices whose
entries are centered and have the same variance. We show that  under 
certain conditions the local law holds up to the optimal scale, i.e.,  the
eigenvalue density on scales just above the eigenvalue spacing follows the global density of
states which is determined by free probability theory. We prove that these conditions
hold for general homogeneous polynomials of degree two  and for symmetrized products of independent matrices with \emph{i.i.d.} entries, thus establishing the optimal bulk local law for
these classes of ensembles. In particular,  we generalize a similar result of Anderson 
for anticommutator. For more general polynomials our conditions are effectively checkable numerically.
   \end{abstract}

\noindent \emph{Keywords:} Polynomials of random matrices, local law, generalized resolvent, linearization, Dyson equation\\
\textbf{AMS Subject Classification:} 60B20, 46L54, 15B52

 \section{Introduction}
\label{sec:introduction}

Polynomials of random matrices have been subject of intensive research in the last thirty years. 
In the 1980's Voiculescu realized that random matrices and their polynomials can be used to  solve some basic problems in operator algebras of
free groups, which gave  birth to \emph{free probability theory}.  Roughly speaking, large independent random matrices serve as concrete approximants 
to free elements in abstract  noncommutative probability spaces, i.e. unital $C^*$-algebras with a tracial state.  In other words,
\emph{freeness} is the appropriate operator algebraic  analogue of independence in classical probability. 
A classical example for such  result is  Theorem 2.2 from \cite{Voic91}
showing that the trace of a self-adjoint polynomial $p(X_1,\ldots,X_k)$ in  $k$ independent $N\times N$ standard complex Gaussian
(GUE)  matrices converges in expectation and almost surely, as the size of the matrices  goes to infinity,
 to the trace of the polynomial $p(\semic_1,\ldots,\semic_k)$ in free semicircular variables.

Voiculescu's pioneering result  has since been extended in many directions. Convergence in operator norm was proved 
in  \cite{HaagThor05}, while convergence of the spectrum, in particular absence of outliers, was established 
in \cite{HaagSchuThor06}.  Another direction of generalizations was to replace Gaussian matrices with Wigner matrices, i.e. 
retain independence of the matrix elements while dropping the special distribution; for the first such result see \cite{Dyke93},
followed by many others, e.g.  \cite{Ande13,BeliBercCapi,BeliCapi17,CapiDoMa07,Male12} and references therein.
Yet another line of research 
 concerns certain qualitative  properties of the limiting spectral measure. For example, the limiting spectral measure 
 for self-adjoint polynomials does not contain atoms \cite{MaiSpeiWebe17,ShlySkou15} and for monomials it is even absolutely continuous \cite{CharShly16}.
  Very recently, the H\"{o}lder continuity of the cumulative distribution function was studied for polynomials \cite{BannMai} and rational functions \cite{MaiSpeiYin1} of random matrices.

 A common feature of all these results, as well as the scope of the underlying methods,
 is that they describe the spectrum of  $p(X_1, \ldots, X_k)$ on the global  scale, which is typically by a  factor
 $N$ larger than the scale of the  eigenvalue spacing. What happens on  scales in between? 
 Recent developments revealed that 
 the eigenvalue density of Wigner and related matrices on  \emph{mesoscopic} scales, i.e., scales involving $\sim N^{\gamma}$ eigenvalues for $0<\gamma<1$, 
also becomes deterministic in the large $N$ limit. Such results are commonly called \emph{local laws} and they have been
established in increasing generality for Hermitian matrices; with independent entries, see e.g.   \cite{AjanErdoKrug17,ErdoKnowYauYin13b,GotzNaumTikhTimu18,HeKnowRose18,TaoVu13},  
with general short range correlation structure for their matrix elements \cite{AjanErdoKrug19,ErdoKrugSchr18}, as well as for adjacency matrices for random regular graphs \cite{BaueHuanYau19,BaueHuanKnowYau17}.
 Local laws beyond mean field models, in particular for band matrices are especially challenging \cite{BaoErdo17, ErdoKnowYauYin13c,YangYin}. 

One of the main motivations for local laws is their key role in the proof of the Wigner-Dyson-Mehta conjecture
on the local spectral universality, see \cite{ErdoYau12}.
Recent developments on the local ergodicity of the Dyson Brownian motion (DBM) have demonstrated that local laws are 
the only model-dependent inputs for the universality proofs using the DBM,
see \cite{ErdoYauBook} for an overview and newer results in \cite{ErdoSchn17,LandYau17,LandYau}.

In this paper we prove  optimal local laws for self-adjoint polynomial models, thus connecting two large areas of recent research in random matrices.
We will combine methods from  free probability theory, most importantly the concept of linearization, with techniques developed
for local laws, such as large deviations and fluctuation averaging phenomenon. We point out that
mesoscopic spectral properties for general polynomials have not been studied before. Local laws have only been
established for a very few specific polynomials such as  (i) the anticommutator, $X_1X_2+X_2X_1$, of two  independent Wigner matrices in \cite{Ande15} 
and (ii) the (non-Hermitian) product $Y_1Y_2\ldots Y_k$  of several independent  \emph{i.i.d.} matrices in \cite{GotzNaumTikh17,Nemi_LLProducts}.

We now explain the method and some difficulties. The first  major obstacle is that the entries of
a general polynomial $P:=p(X_1,\ldots,X_k)$ of, say, independent $N\times N$ Wigner matrices, have a very complex non-local
correlation structure. This makes it impossible to apply the tools developed in \cite{AjanErdoKrug19}  or \cite{AndeZeit08} directly in the polynomial setting.
However, the well-known \emph{linearization trick}, originally developed in the context of automata theory \cite{Klee56,Schu61} and revived for use in random matrix theory \cite{HaagSchuThor06, HaagThor05},
transforms the polynomial model into a much larger random matrix $\Hb$ with 
a transparent correlation structure.  In fact, the linearized matrix  is a tensor linear combination of the independent Wigner matrices
with matrix coefficients whose  dimension $m\times m$ depends only on the polynomial $p$ and is independent of $N$.
This structure exactly corresponds to certain block matrices and more generally  \emph{Kronecker random matrices} introduced in  \cite{AltErdoKrugNemi_Kronecker}.
We remark that the linearization technique has been widely used in the free probability community
to study polynomials of random matrices 
on the global scale, see e.g. \cite{Ande13,BeliMaiSpei17,HaagThor05,HeltMaiSpei18,HeltMaccVinn06}
and  \cite[Chapter~5]{AndeGuioZeitBook} for a pedagogical introduction.

Local laws for  Kronecker matrix $\Hb$ have been studied in detail in \cite{AltErdoKrugNemi_Kronecker} by proving concentration of its resolvent
$(\Hb-z I_m\otimes I_N)^{-1}$  around the solution of corresponding matrix Dyson equation for spectral parameter $z$ in complex upper half-plane.
In contrast to the Kronecker case, to study the resolvent $(P-z)^{-1}$ of our polynomial,
we have to consider the \emph{generalized resolvent} of the linearized matrix $\Hb$, i.e., $(\Hb - z J\otimes I_N)^{-1}$, where $J$ is a rank-one $m\times m$
 matrix. Thus the results on the Kronecker matrices cannot be directly applied, in fact \emph{a priori} it is unclear whether the generalized resolvent is stable.
This is the second major obstacle in the study of polynomial models, and we overcome it by  simultaneously considering the generalized resolvent of $\Hb$ and its usual regularized version. It turns out that a certain nilpotency structure inherent for linearizations of polynomials yields the boundedness of the generalized resolvents
even after the regularization is removed. 

After these two key obstacles cleared, we can essentially use the local law established
for general  Kronecker matrices  in  \cite{AltErdoKrugNemi_Kronecker}
under two basic conditions: (i) the solution of the underlying Dyson equation is bounded and (ii) the stability operator is invertible. 
These conditions are verified for homogeneous polynomials  of degree two in Wigner matrices, substantially  generalizing the case of anticommutator studied by Anderson in \cite{Ande15}.
 We also verify them for the symmetrized product $Y_1 \cdots Y_k (Y_1 \cdots Y_k)^*$ of independent matrices with \emph{i.i.d.} entries.
 For more general polynomials, 
the validity of these conditions depends on the structure of the linearization but they are independent of $N$, so they are
numerically checkable. Notice that the linearization of a polynomial is not unique. In fact, any of 
the standard linearizations, obtained via a simple recursive procedure, typically has unnecessarily 
large dimension. It is much more effective to use the so-called \emph{minimal linearization}, which  is canonical 
\cite{BersReutBook,HeltMaccVinn06,BallGroeMala2005,BallGroeMala2006a,BallGroeMala2006b}, and we present numerical examples to demonstrate its advantages. Since both linearizations
are nilpotent, our theory equally applies to them.
We expect that for any self-adjoint polynomial there exists a linearization for which the conditions (i) and (ii) above hold everywhere in the bulk, i.e., where the density of states is bounded and bounded away from zero,
and in fact the minimal linearization is a natural  candidate. 

The task of dealing with the generalized resolvent of the linearization is inherent in other works on 
polynomials of random matrices that use the resolvent method, see  
\cite{Ande13,CapiDoMa07,HaagThor05,Male12}. In the most general setup,
 Anderson in  \cite{Ande13} used one of  the explicit standard linearizations 
to prove the  global law and the convergence of the norm for polynomials in Wigner matrices.  The structure of the standard
linearization allowed him to control the generalized resolvent directly from the resolvent of  $P$ via Schur complement formula.
A simpler version of this idea was presented in  \cite[Chapter~5.5]{AndeGuioZeitBook}. 
For the canonical minimal linearization such  simple a priori bound is not available. From algebraic point of view, the main novelty 
of our work is to identify a nilpotency structure in the minimal linearization and show that this structure is
sufficient to control the generalized resolvent.  From the analytic point of view,  we advocate the method of the stability
analysis of the Dyson equation combined with large deviation and fluctuation averaging estimates as
presented in \cite{AltErdoKrugNemi_Kronecker}, which itself is a natural extension  of many previous works on local 
laws for Wigner and  Wigner-type matrices.  This approach substitutes   the Poincar\'{e} inequality used in \cite{CapiDoMa07}
and the $L^p$ bounds used in \cite{Ande13} whose analogue for Wigner  and Wishart matrices go back to Bai and Silverstein \cite{Bai93,BaiSilv98}.

We close this introduction with a remark on local spectral universality. Our local law is optimal and it provides the necessary
 input for the customary proofs via the Dyson Brownian motion (DBM) as mentioned above. Thus we could easily prove bulk universality
 for polynomials that already  have a small additive GUE component. We cannot, however,  apply the 
 usual DBM argument to the linearized matrix since it would need to   assume that 
 a small global Gaussian component is present in  $\Hb$, but $\Hb$ has many zero blocks by construction.
 This fundamental difficulty has been overcome for certain band  matrices  \cite{BourErdoYauYin17,BourYauYin} which also has many zero entries.
 However, the specific  band structure was essential in those proofs. For the local spectral universality for a polynomial $P$
 the structure of the linearized
 matrix needs to  be exploited in a similar fashion. We note that apart from the trivial case of Hermitian polynomials of a single random
 matrix, currently the only nontrivial  universality results for polynomials are obtained for  very special cases and only for  Gaussian matrices
 by exploiting their determinantal structure, see, e.g.,  the  survey \cite{AkemIpse15} on products of large Gaussian random matrices.
 
 In Section~\ref{sec:polyAndLin} we introduce the concept of nilpotent linearization, the corresponding Dyson equation and
we present our main result together with  the conditions expressed in terms of the solution to the Dyson equation. Section~\ref{sec:aboutlinearizations}
is devoted to control the generalized resolvent by exploiting the nilpotent structure. In Section~\ref{sec:MDE} we present the existence
and uniqueness of the solution to  the Dyson equation by using
semicircular variables.
 In Section~\ref{sec:locallaw} we give a proof of the local law.
Finally, as an application,  in Section~\ref{sec:examples} we show the optimal bulk local law for general homogeneous polynomials of degree two in Wigner matrices and for symmetrized products of matrices with \emph{i.i.d.} entries.
Additional information on two different linearizations,
 as well as their numerical comparison
 are deferred to Appendix~\ref{sec:aLinCM}, while in Appendix~\ref{sec:semicircular} we collected some basic information   on
 semicircular variables for the reader's convenience.
 
\medskip

\emph{Acknowledgement.} The authors are grateful to Oskari Ajanki for his invaluable help at the initial stage of this project, to Serban Belinschi for useful discussions, to Alexander Tikhomirov for calling our attention to the model example in Section~\ref{sec:exampl-sing-valu} and to  the  anonymous referee for suggesting to simplify certain proofs.

\section{Main results}
\label{sec:polyAndLin}

\subsection{Linearization and Dyson equation in $C^*$-algebras}\label{sec:linDyson}

Fix $\alpha_*,\beta_*\in \NN$.
Let $\mathscr{A}$ be a unital $C^*$-algebra with  norm $\| \cdot  \|_{\Acc}$ and identity element $\cstarunit$, and let $x_1,\ldots,x_{\alpha_*},y_1,\ldots,y_{\beta_*} \in \mathscr{A}$ with $x_i^*=x_i$ for $1\leq i\leq \alpha_*$.
For any $n\in \NN$ and $\rb = (r_1,\ldots,r_n) \in \Acc^{n}$ we define $\|\rb \|: = \max_{1\leq i \leq n} \|r_i\|_{\Acc}$.
Denote by
\begin{equation*}
  \CC\la \xb,\yb,\yb^* \ra : = \CC\la x_1,\ldots,x_{\alpha_*},y_1,\ldots,y_{\beta_*},y_1^*,\ldots,y_{\beta_*}^* \ra  
\end{equation*}
the set of polynomials with complex coefficients in noncommutative elements $\{x_{\alpha},y_{\beta},y_{\beta}^*,1\leq\alpha\leq \alpha_*,  1\leq \beta\leq \beta_*\}$.
Let $p:= p(\xb, \yb, \yb^*) \in \CC \la \xb,\yb,\yb^* \ra$ and assume that $p$ is self-adjoint, i.e.,
\begin{equation*}
  (p(\xb, \yb, \yb^*))^*=p(\xb, \yb, \yb^*)
  .
\end{equation*}

It is a common and convenient practice to study the polynomials via their linearizations.
Linearization allows to transform polynomial model into a linear one, which is typically easier to analyze.
The price for doing this is the increased dimension of the model, which can quickly become prohibitive for more complicated polynomials.
\begin{defn}[Self-adjoint linearization]\label{def:saLin}
  Let $m\in\NN$ and let $\Lb\in (\CC\la \xb,\yb,\yb^* \ra)^{m\times m}$ be a matrix, whose matrix elements are polynomials of degree at most $1$.
  Suppose that
  \begin{equation} \label{eq:Lexpr1}
    \Lb  =
    \begin{pmatrix}
      \lambda & \ell^*
      \\
      \ell & \widehat{\Lb}
    \end{pmatrix},
  \end{equation}
  where $\widehat{\Lb}$ is the $(m-1)\times (m-1)$ submatrix of $\Lb$.
  We call $\Lb$ a \emph{self-adjoint linearization} (or simply \emph{linearization}) of $p\in \CC \la \xb, \yb, \yb^* \ra$ if $\Lb^*=\Lb$  and there exists $\epsD>0$ such that for all $\|\xb\|<\epsD, \|\yb\|< \epsD$, the matrix $\widehat{\Lb}$ is invertible and satisfies 
  \begin{equation}\label{eq:Lin11}
    p
    =
    \lambda - \ell^*\widehat{\Lb}^{\,-1}\ell
    .
  \end{equation}
  We will refer to $m$ as the \emph{dimension} of the linearization $\Lb$.
\end{defn}
Note that due to the property $\Lb^*=\Lb$ a self-adjoint linearization $\Lb$ can be written as
\begin{equation}\label{eq:linearization}
  \Lb
  =
  K_0\otimes\cstarunit - \sum_{\alpha=1}^{\alpha_*} K_\alpha\otimes x_\alpha - \sum_{\beta=1}^{\beta_*}( L_{\beta}\otimes y_\beta+L^*_{\beta} \otimes y^*_\beta)
  ,
\end{equation}
where $K_\alpha, L_\beta\in \CC^{m\times m}$ and $K_0^*=K_0$, $K_\alpha^*=K_\alpha$.
In this paper all linearizations are self-adjoint, so we will not stress self-adjointness all the times.

For each polynomial one can write many different linearizations.
In the related literature \cite{AndeGuioZeitBook,BersReutBook,HaagThor05,HeltMaccVinn06} one can distinguish two groups of methods used for constructing the linearizations of polynomial (and more generally rational) functions.
One group uses very explicit algorithms to build linearizations first for monomials, and then extending them to linear combinations of monomials.
These algorithms are well-know, but for the sake of completeness we will  give in  Appendix~\ref{sec:abiglinearization} a version of such an explicit linearization.
This is a standard construction that typically yields a linearization in very high dimension.
For many practical reasons it is better to work with smaller linearizations, which naturally leads to the notion of \emph{minimal} linearization.
\begin{defn}[Minimal linearization]
  A linearization of a polynomial  is called \emph{minimal} if it has the smallest dimension among all linearizations.
\end{defn}
Minimal linearization can be obtained by reducing the dimension of some previously constructed linearization (see e.g. \cite[Chapter~2.3]{BersReutBook}) and then using the symmetrization trick if needed  to restore self-adjointness \cite[Lemma~4.1 (3)]{HeltMaccVinn06}.
For completeness, as well as for the reader's convenience, in  Appendix~\ref{sec:minimallinearization} we present a somewhat different
 algorithmic procedure that  directly yields a minimal (self-adjoint) linearization from any (self-adjoint) linearization. 
 
Typically the dimension of a minimal linearization is significantly smaller compared to the standard
 linearization constructed in  Appendix~\ref{sec:abiglinearization} (see  Appendix~\ref{sec:aLinComp} for comparison), 
 which makes it much more convenient to work with if we want to study the model numerically.

In order to use linearizations for studying the resolvents of polynomials of random matrices it will be convenient to work with a special class of \emph{nilpotent} linearizations that we introduce  now. 
\begin{defn}[Nilpotent family] A family of matrices $\{ R_i \in \CC^{m\times m}  \; : \; i\in I\}$    is called \emph{nilpotent} if there exists an integer $n$ such that
  $R_{i_1} R_{i_2} \ldots R_{i_n}=0$ for any $n$-tuple of indices $(i_1, i_2,\ldots , i_n)\in I^n$.
\end{defn}
Define the  matrix $J := e_1 e_1^{t} \in \CC^{m\times m}$, where $e_1= (1,0,\ldots 0)^{t}\in \CC^m$, i.e. $J$ is an $m\times m$ matrix having the $(1,1)$-entry equal to $1$ and all the other entries equal to zero.
Let $\la \cdot\, , \cdot \ra\, : \, \CC^{m}\times \CC^{m} \rightarrow \CC$ be the usual scalar product in $\CC^{m}$ linear in the second variable.
For brevity we will denote $\llbr n \rrbr := \{1,\ldots , n\}$ for any $n \in \NN$.
\begin{defn}[Nilpotent linearization]\label{def:nilpotentlin}
  A linearization of a polynomial $p$ of the form \eqref{eq:linearization} is called \emph{nilpotent} if 
  \begin{itemize}
    \setlength\itemsep{0em}
  \item[(i)] $K_0$ is invertible;
  \item[(ii)]   $\la  e_1 , K_0^{-1} e_1\ra=1$
  \item[(iii)] The family of matrices
    \begin{equation*}
      \Big\{ \proj' K_\alpha K_0^{-1} \proj',\,  \proj'  L_\beta K_0^{-1} \proj', \,\proj'  L_\beta^* K_0^{-1} \proj' \; : \;
      \alpha \in \llbr \alpha_* \rrbr, \; \beta \in \llbr \beta_* \rrbr\Big\}
    \end{equation*}
    is nilpotent, where we set $\proj : = JK_0^{-1}$ and $\proj':=I-\proj$
  \end{itemize}
\end{defn}
One can easily see that (ii) is equivalent to $p(0,0,0)=\cstarunit$.
Indeed, let  $\la \cdot, \cdot \ra_{\Acc} \, : \, \CC^{m}\otimes \Acc \times \CC^{m}\otimes \Acc\rightarrow \Acc$ be an operator given by
  \begin{equation*}
    \la \lb , \rb \ra_{\Acc}
    :=
    \sum_{k=1}^{m} \lb_{k}^{*} \rb_{k}
  \end{equation*}
  where $\rb, \lb \in \CC^{m}\otimes \Acc$, $\rb = \sum_{k=1}^{m} e_k \otimes \rb_{k}$, $\lb = \sum_{k=1}^{m} e_k \otimes \lb_{k}$ and $e_k = (\delta_{ik})_{k=1}^{m}$.
  Then by the Schur complement formula and \eqref{eq:Lin11} we have that
  \begin{equation*}
    \la e_1 \otimes \cstarunit, \Lb^{-1} e_1\otimes \cstarunit \ra_{\Acc}
    =
    p^{-1}
    .    
  \end{equation*}
If we now take $\xb=\yb=0$, then $\Lb^{-1} = K_0^{-1}\otimes \cstarunit$ and thus $\la e_1, K_0^{-1}\, e_1 \ra \cstarunit = (p(0,0,0))^{-1}$.
Shifting the polynomial by a constant, without loss of generality, we may and will  assume in the rest of the paper that the constant term of the polynomial is 1, i.e. we write $p(\xb, \yb, \yb^*)=\cstarunit- q(\xb,\yb,\yb^*)$ for some polynomial 
$q(\xb,\yb,\yb^*)$  with $q(0,0,0)=0$.
Furthermore, note that $\pi$ is a projection by (ii) and $J=e_1 e_1^{t}$, but in general it is not an orthogonal projection.

We will show in Section~\ref{sec:Nilp} that for any polynomial of the form $p = \cstarunit - q$  both  linearizations constructed in  Appendix~\ref{sec:aLinCM}  belong to the class of nilpotent linearizations.
This property will be used to obtain an a priori bound for the \emph{generalized resolvent} of the linearization, that we define below.

Denote by $\Amxm$ the set of $m\times m$ matrices  with elements from $\Acc$.
We can look at $\Lb$ as an operator on $\Amxm$ equipped with the Banach space structure from $\mathscr{A}$.
For any $z\in\CC_+$  we will consider the \emph{generalized resolvent} of $\Lb$ defined  as $(\Lb-zJ \otimes \cstarunit)^{-1}$.

From the Schur complement formula
\begin{equation}\label{eq:schur}
  \big\la e_1 \otimes \cstarunit,  (\Lb-zJ \otimes \cstarunit )^{-1} e_1\otimes \cstarunit\big\ra_{\Acc}
  =
  \big(\lambda - z{\cstarunit} - \ell^*\widehat{\Lb}^{\,-1}\ell\big)^{-1}
  =
  (p-z{\cstarunit})^{-1}
\end{equation}
i.e.    the $(1,1)$-component of the generalized resolvent is the resolvent of $p$, viewed as an element of  $\mathscr{A}$.
In particular, if we take $\Acc$ to be $\CC^{N\times N}$, then the resolvent of a polynomial $p$ of matrices of size $N\times N$ is given by the {upper left $N\times N$ block} of the generalized resolvent of the corresponding linearization.

In Section~\ref{sec:atrivialbound} we show that generalized resolvent of a nilpotent linearization is well defined for all $z\in \CC_{+}$.
More precisely, define a norm $\|\, \cdot \,  \|$ on $\Amxm$  by 
\begin{equation*}
  \| \Rb \|
  :=
  \max_{1\leq k, l \leq m} \| \Rb_{kl} \|_{\Acc}
  ,
\end{equation*}
where $\Rb = \sum_{k,l=1}^{m} E_{kl}\otimes \Rb_{kl}$ and $E_{kl} = (\delta_{ik}\delta_{lj})_{1\leq i,j\leq m}$ is the standard basis in $\CC^{m\times m}$.
Then the following lemma holds.
\begin{lem} \label{lem:trivBound}  Let $q\in \CC\la \xb,\yb,\yb^* \ra$ be a self-adjoint polynomial with $q(0,0,0)=0$.
  Let $\Lb\in \Amxm$ be a nilpotent linearization of $\cstarunit - q(\xb,\yb, \yb^*)$
  Then there exist $C_1>0$ and $n_1\in \NN$, depending on $\Lb$, such that for all $z\in \CC_+$
  \begin{equation} \label{eq:trivResBound}
    \| ( \Lb -zJ\otimes \cstarunit)^{-1}\|
    \leq
    C_1 \bigg(1+\frac{1}{\Im z}\bigg)\, \big(1+\max_{1\leq \alpha\leq \alpha_*}\|x_{\alpha}\|_{\Acc}^{n_1}+\max_{1\leq \beta \leq \beta_*}\|y_{\beta}\|_{\Acc}^{n_1}\big)
    .
  \end{equation}
\end{lem}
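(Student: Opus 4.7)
The plan is to reduce inverting $\Lb - zJ\otimes\cstarunit$ to inverting two easier operators: the scalar $p - z\cstarunit \in \Acc$, whose inverse is bounded by $1/\Im z$ since $p$ is self-adjoint and $z\in\CC_+$, and the finite-dimensional operator $\proj' - \proj'\widetilde V\proj'$ on the $\proj'$-range that condition~(iii) of Definition~\ref{def:nilpotentlin} renders invertible by a \emph{finite} Neumann sum. First, using invertibility of $K_0$ from condition~(i), I right-multiply by $K_0^{-1}\otimes\cstarunit$ to obtain $\widetilde{\Lb} := (\Lb - zJ\otimes\cstarunit)(K_0^{-1}\otimes\cstarunit) = I\otimes\cstarunit - z\proj\otimes\cstarunit - \widetilde V$, with $\widetilde V := \sum_\alpha K_\alpha K_0^{-1}\otimes x_\alpha + \sum_\beta(L_\beta K_0^{-1}\otimes y_\beta + L_\beta^* K_0^{-1}\otimes y_\beta^*)$, and note that $\proj^2=\proj$ by condition~(ii). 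A bound on $(\Lb - zJ\otimes\cstarunit)^{-1}$ will then follow from one on $\widetilde{\Lb}^{-1}$ at the cost of a constant factor $\|K_0^{-1}\|$.

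Next I perform the Schur complement decomposition of $\widetilde{\Lb}$ relative to the oblique projections $\proj$ and $\proj'$. The $\proj'$-block is $\proj'-\proj'\widetilde V\proj'$, and by condition~(iii) the family $\{\proj' K_\alpha K_0^{-1}\proj', \proj' L_\beta K_0^{-1}\proj', \proj' L_\beta^* K_0^{-1}\proj'\}$ is nilpotent of some order $n=n(q)$. Expanding $(\proj'\widetilde V\proj')^n$ as a sum over $n$-tuples of the variables and using $(\proj')^2=\proj'$, every resulting matrix factor is a product of $n$ elements from this family, hence vanishes. Consequently $(\proj'-\proj'\widetilde V\proj')^{-1}=\sum_{k=0}^{n-1}(\proj'\widetilde V\proj')^k$ on the $\proj'$-range, with operator norm bounded by a polynomial of degree $n-1$ in $\max_\alpha\|x_\alpha\|_\Acc$ and $\max_\beta\|y_\beta\|_\Acc$.

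It remains to identify the $\proj$-range Schur complement $S := \proj\widetilde{\Lb}\proj - \proj\widetilde{\Lb}\proj'(\proj'\widetilde{\Lb}\proj')^{-1}\proj'\widetilde{\Lb}\proj$ with multiplication by $p-z\cstarunit$. For this I would use a polynomial-extension argument: for $\|\xb\|,\|\yb\|<\epsD$, Definition~\ref{def:saLin} together with \eqref{eq:Lin11} and the $(1,m-1)$ Schur formula ensures $\Lb - zJ\otimes\cstarunit$ is invertible, and using $\widetilde{\Lb}^{-1}=(K_0\otimes\cstarunit)(\Lb - zJ\otimes\cstarunit)^{-1}$ together with \eqref{eq:schur} a direct computation gives $\proj\widetilde{\Lb}^{-1}\proj = (p-z\cstarunit)^{-1}\proj$. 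Schur's block-inversion identity then forces $S|_{\proj\text{-range}} = (p-z\cstarunit)\proj|_{\proj\text{-range}}$. Since the finite Neumann sum makes $S$ itself a noncommutative polynomial in $\xb,\yb,\yb^*$ at fixed $z$, this identity extends to arbitrary tuples.

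Substituting $S^{-1}|_\proj = (p-z\cstarunit)^{-1}\proj$ (with $\Acc$-norm bounded by $1/\Im z$) together with the $\proj'$-block inverse into the explicit Schur inversion formula expresses each of the four blocks of $\widetilde{\Lb}^{-1}$ as a product of these two inverses with $\proj\widetilde V\proj,\proj\widetilde V\proj',\proj'\widetilde V\proj$, yielding $\|\widetilde{\Lb}^{-1}\| \leq C(1 + 1/\Im z)(1 + \max_\alpha\|x_\alpha\|_\Acc^{n_1} + \max_\beta\|y_\beta\|_\Acc^{n_1})$ for constants $C, n_1$ depending only on $q$, and then \eqref{eq:trivResBound} via $(\Lb - zJ\otimes\cstarunit)^{-1} = (K_0^{-1}\otimes\cstarunit)\widetilde{\Lb}^{-1}$. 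The main subtlety is the algebraic identification of $S$ with $(p-z\cstarunit)\proj$; the polynomial-extension route via the small-norm regime sidesteps a cumbersome direct computation through the $K_0^{-1}$ change of basis.
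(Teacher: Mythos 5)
Your proposal is correct and follows essentially the same route as the paper's proof: pass to $\Tb=(\Lb-zJ\otimes\cstarunit)(K_0^{-1}\otimes\cstarunit)$ (the paper's \eqref{eq:rewrite} and \eqref{eq:aTscDef}), invert the $\proj'$-block by the finite Neumann sum $\Sb$ that nilpotency provides, identify the $\proj$-block with $(\cstarunit-\tilde q-z\cstarunit)^{-1}$ via the Schur complement as in \eqref{eq:aSchurC110}--\eqref{eq:aSchurC22}, and bound it by $1/\Im z$ using self-adjointness. Your explicit polynomial-extension step for identifying the $\proj$-range Schur complement with $p-z\cstarunit$ beyond the small-norm regime is a slightly more careful rendering of what the paper asserts directly, but it is the same argument.
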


Suppose now that we have a nilpotent linearization $\Lb$ in the form \eqref{eq:linearization}.
Define the linear map $\SuOp:\CC^{m\times m}\rightarrow \CC^{m\times m}$ by
\begin{equation}\label{eq:SuOp}
  \SuOp[R]
  =
  \sum_{\alpha=1}^{\alpha_*} K_\alpha R K_\alpha +\sum_{\beta=1}^{\beta_*} (L_{\beta} R L^*_{\beta} +L^*_{\beta} R L_{\beta}) 
  .
\end{equation}
For any $z\in \CC_+$ (spectral parameter) we consider the equation 
\begin{equation}
  \label{eq:MDE}
  - M^{\,-1} 
  =
  z J - K_0 + \SuOp[M]
\end{equation}
for the unknown matrix $M\in \CC^{m\times m}$.
We will always consider solutions with the side condition that $\Im M\ge 0$ where $\Im M = \frac{1}{2i}(M-M^*) $.
We call equation \eqref{eq:MDE} the \emph{Dyson equation for linearization (DEL)}.

Note that \eqref{eq:MDE}  is very similar to the matrix Dyson equations (MDE) extensively studied in
 the literature in connection with  large random matrices (see e.g. \cite{HeltRaFaSpei07} and \cite{AjanErdoKrug19}).
Their solutions typically give the deterministic part of the resolvent of a random matrix.
The main difference between \eqref{eq:MDE} and the MDE in \cite{AjanErdoKrug19} is that instead of the identity matrix, the spectral parameter $z$ 
appears with a coefficient matrix $J$ of smaller rank.
This makes \eqref{eq:MDE} much harder to analyse, in particular basic boundedness and stability properties do not follow directly from the structure of \eqref{eq:MDE} alone.
Nevertheless, the fact that \eqref{eq:MDE} comes from the linearization of a polynomial, especially that it is nilpotent still ensures its good properties. 

 For any  matrix  $R\in \CC^{m\times m}$ we denote by $\| R\|$ the operator norm induced by the Euclidean norm in $\CC^{m}$. 
The next lemma states the existence and uniqueness of the solution to \eqref{eq:MDE}, in particular
we may denote the solution $M=M(z)$, indicating its dependence on the spectral parameter.

\begin{lem}[Existence and uniqueness of solution of DEL]\label{lem:MDEexistence}
  Let $\Lb$ be a nilpotent linearization of the self-adjoint polynomial $\cstarunit - q(\xb, \yb,\yb^*)$ with $q(0,0,0)=0$  and let $\SuOp:\CC^{m\times m}\rightarrow \CC^{m\times m}$ be defined as in \eqref{eq:SuOp}.
  There exists a matrix-valued function $\GSol \,: \, \CC_{+}\rightarrow \CC^{m\times m}$ such that for all $z\in \CC_+$
  \begin{itemize}
    \setlength\itemsep{0em}
  \item[(i)] $\| \GSol (z)\| \leq C \, ( 1+1/\Im z)$ for some $C > 0$ independent of $z$;   
  \item[(ii)] $\GSol (z)$ depends analytically on $z$;
  \item[(iii)] $\Im \GSol (z) \geq 0$;
  \item[(iv)] $\GSol (z)$ satisfies the DEL \eqref{eq:MDE}.
  \end{itemize}
  This function is the unique solution of \eqref{eq:MDE} in the class of matrix-valued functions with $\Im M(z)\ge 0$  that are  analytic in the upper half-plane, i.e. if $M'\,: \, \CC_+ \rightarrow \CC^{m\times m}$ and $M'$ satisfies $(i)-(iv)$, then $M'=\GSol$.
\end{lem}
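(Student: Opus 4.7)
The plan is to construct $\GSol$ explicitly via free probability and then derive uniqueness from a stability argument combined with analytic continuation from the region of large $\Im z$. For the existence part, let $(\Acc_0,\tau)$ be a faithful tracial $C^*$-probability space containing free standard semicircular elements $s_1,\dots,s_{\alpha_*}$ and free standard circular elements $c_1,\dots,c_{\beta_*}$, all mutually free (cf.\ Appendix~\ref{sec:semicircular}). Form the operator
$$
\Lb_0 := K_0\otimes\cstarunit - \sum_{\alpha=1}^{\alpha_*}K_\alpha\otimes s_\alpha - \sum_{\beta=1}^{\beta_*}\bigl(L_\beta\otimes c_\beta + L_\beta^*\otimes c_\beta^*\bigr) \in \CC^{m\times m}\otimes\Acc_0,
$$
i.e.\ the nilpotent linearization $\Lb$ with its formal indeterminates replaced by free (semi)circular variables. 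By Lemma~\ref{lem:trivBound}, applied in $\Acc_0$, the generalized resolvent $G(z) := (\Lb_0 - zJ\otimes\cstarunit)^{-1}$ is well defined on $\CC_+$ with $\|G(z)\| \leq C\bigl(1 + (\Im z)^{-1}\bigr)$. I then define
$$
\GSol(z) := (\mathrm{id}\otimes\tau)[G(z)] \in \CC^{m\times m}.
$$

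Properties (i)-(iii) transfer directly from $G$: the norm bound follows from the bound on $G$ together with $|\tau(a)|\leq\|a\|_{\Acc_0}$; analyticity in $z$ from the Neumann-series representation of the resolvent; and $\Im\GSol(z)\geq 0$ from the identity $\Im G(z) = (\Im z)\,G(z)(J\otimes\cstarunit)G(z)^*\geq 0$ combined with positivity of $\mathrm{id}\otimes\tau$. For (iv), I expand $G$ as a Neumann series around $G_0 := (K_0 - zJ)^{-1}\otimes\cstarunit$ in the noise $\mathcal{V} := \sum_\alpha K_\alpha\otimes s_\alpha + \sum_\beta(L_\beta\otimes c_\beta + L_\beta^*\otimes c_\beta^*)$ and apply $\mathrm{id}\otimes\tau$. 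By freeness, only non-crossing pair partitions of the $s_\alpha, c_\beta$ factors survive, and the standard operator-valued resummation (the free Schwinger-Dyson relation for our element) yields the self-consistent identity
$$
\GSol = G_0 + G_0\,\SuOp[\GSol]\,\GSol,
$$
which rearranges directly into the DEL \eqref{eq:MDE}. The a priori bound of Lemma~\ref{lem:trivBound} is precisely what justifies the convergent resummations at the operator-valued level.

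For uniqueness, suppose $M$ and $M'$ both satisfy (i)-(iv). Subtracting the DELs and multiplying by $M^{-1}$ on the left and $M'^{-1}$ on the right gives
$$
M^{-1}(M - M')M'^{-1} = \SuOp[M - M'],
$$
so $\Delta := M - M'$ is annihilated by the stability operator $R \mapsto R - M\,\SuOp[R]\,M'$. At $z = iy$ with $y$ large, the bound (i) together with the DEL and a Sherman-Morrison expansion of $(K_0 - zJ)^{-1}$ (using $\la e_1, K_0^{-1}e_1\ra = 1$) forces $\GSol(iy)$ to be determined by a finite expansion in $K_0^{-1}$-dressed versions of the coefficient matrices whose termination is exactly the nilpotency condition of Definition~\ref{def:nilpotentlin}(iii); in particular, $M(iy) = M'(iy)$ for $y$ sufficiently large. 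Since both maps are analytic on $\CC_+$, the identity theorem propagates the equality to all of $\CC_+$.

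The hardest step is the rigorous derivation of the DEL in (iv): the operator-valued Schwinger-Dyson manipulation must be carried out carefully because the coefficients $K_\alpha, L_\beta$ do not commute with the spectral shift $zJ$, and the generalized resolvent does not fit the standard matrix Dyson framework of \cite{AjanErdoKrug_PTRF2}, so existing results cannot be invoked as a black box. The nilpotency of the linearization enters through Lemma~\ref{lem:trivBound}, providing the a priori operator-norm bound needed to make the Neumann-type expansions convergent, and simultaneously driving the finite termination in the uniqueness argument above.
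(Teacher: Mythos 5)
Your overall strategy coincides with the paper's: define $\GSol(z)=(\id\otimes\tau)\bigl(\Lb_{\mathrm{sc}}-zJ\otimes\freeunit\bigr)^{-1}$ with free (semi)circular elements substituted into the linearization, transfer (i)--(iii) from Lemma~\ref{lem:trivBound} and the resolvent identity for the imaginary part, derive the DEL from the free Schwinger--Dyson (integration by parts) relation, and prove uniqueness via a series expansion controlled by nilpotency followed by the identity theorem.

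There is, however, a genuine gap in your justification of (iv). The a priori bound of Lemma~\ref{lem:trivBound} controls the generalized resolvent itself; it does \emph{not} make the Neumann expansion of $G(z)$ around $G_0=(K_0-zJ)^{-1}\otimes\cstarunit$ in the noise $\mathcal{V}$ convergent, and for the generalized resolvent taking $|z|$ large does not shrink $G_0\mathcal{V}$ the way it does for an ordinary resolvent, because the spectral parameter enters only through the rank-one matrix $J$. The paper resolves this by conjugating with $D_z=\sqrt{1-z}\,\proj+\proj'$, so that the rescaled coefficients $A_\gamma^z$ satisfy $\|(A_\gamma^z)^{k^*}\|\to 0$ as $|z|\to\infty$ precisely because of the nilpotency of the family $\{\proj'A_\gamma\proj'\}$; this verifies the convergence hypothesis \eqref{eq:aIPFcond} of the integration-by-parts lemma only for $|z|$ large, and the DEL is then extended to all of $\CC_+$ by analyticity of both sides of the equation. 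Your write-up asserts the resummation for all $z$, attributes its convergence to the wrong input, and omits the final analytic-continuation step for (iv). The uniqueness sketch matches the paper's in outline (expansion convergent for large $|z|$ via nilpotency, then the identity theorem), but the injectivity of $R\mapsto R-M\,\SuOp[R]\,M'$ at large $|z|$ again has to be made quantitative through the $D_z$-rescaled expansion rather than through Lemma~\ref{lem:trivBound} alone.
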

Lemma~\ref{lem:MDEexistence} will be proven in Section~\ref{sec:MDE}. In the rest of the paper, $\GSol=\GSol(z)$  will always denote the 
unique solution to \eqref{eq:MDE} obtained in Lemma~\ref{lem:MDEexistence}.

\begin{lem}[Stieltjes transform representation] \label{lem:StiltRep}
Let $\GSol(z) $ be the unique solution to DEL \eqref{eq:MDE} constructed in Lemma~\ref{lem:MDEexistence}.
  We then have the following: 
  \begin{itemize}
    \setlength\itemsep{0em}
  \item[(i)] For any $z\in \CC_+$
    \begin{equation}\label{eq:matSt}
      \GSol(z)
      =
      \GSol^{\infty}+\int_{\RR} \frac{V(dx)}{x-z}
      ,
    \end{equation}
    where $\GSol^{\infty} \in \CC^{m\times m}$ and $V(dx)$ is a (positive semidefinite)  matrix-valued measure on $\RR$ with compact support;
  \item[(ii)] For almost every $x \in \RR$ there exists the limit $\lim_{y \rightarrow 0_{ + }} \pi^{-1} \Im \GSol(x+\imunit y) = V(x) \in \CC^{m\times m}$; if the
   limit is finite on some interval $I\subset \RR$ everywhere, then  $V(dx)$ is absolutely continuous on $I$ and $V(dx)=V(x)dx$; 
  \item[(iii)]  There exists $C >0$ such that for any $z\in \CC_+$
    \begin{equation*}
      \Tr \Im \GSol(z)
      \leq
      C \la e_1, \Im \GSol(z) \, e_1 \ra
      .
    \end{equation*}
    In particular, we have that $\mathrm{supp} (V_{11}) = \mathrm{supp} (\Tr V)$.
  \end{itemize}
\end{lem}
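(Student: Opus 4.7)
The overall strategy is to treat (i) and (ii) as a direct application of the matrix-valued Nevanlinna--Herglotz representation theorem to $\GSol$, and to prove (iii) by realising $\GSol$ as a partial expectation of the generalised resolvent of $\Lb$ in a $C^*$-algebra and then using the Schur-complement structure of $\Lb$ together with the nilpotency of Definition~\ref{def:nilpotentlin}.

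For (i) and (ii), items (ii) and (iii) of Lemma~\ref{lem:MDEexistence} say exactly that $\GSol\colon\CC_+\to \CC^{m\times m}$ is an analytic matrix Herglotz (Nevanlinna) function, while the bound $\|\GSol(z)\|\le C(1+1/\Im z)$ from item~(i) gives $\sup_{y\ge 1}\|\GSol(\imunit y)\|<\infty$. The standard matrix-valued Nevanlinna representation theorem then produces a Hermitian $\GSol^{\infty}\in\CC^{m\times m}$ and a finite positive semidefinite matrix-valued Borel measure $V$ on $\RR$ for which \eqref{eq:matSt} holds, with $\GSol^{\infty}=\lim_{y\to\infty}\GSol(\imunit y)$; its imaginary part vanishes by dominated convergence on the Poisson integral. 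Part (ii) is the matrix version of Stieltjes inversion: $\pi^{-1}\Im\GSol(x+\imunit y)$ converges as $y\downarrow 0$ to the Lebesgue density of the absolutely continuous part of $V$ for a.e.\ $x$, and a standard Poisson-kernel argument shows that if the limit is finite everywhere on an interval $I$ then the singular part of $V$ restricted to $I$ vanishes.

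For (iii) I invoke the free-probabilistic construction of $\GSol$ carried out in the proof of Lemma~\ref{lem:MDEexistence}: there is a $C^*$-algebra $\Acc$ with a faithful tracial state $\tau$ and a free semicircular/circular family $(\xb,\yb,\yb^*)\subset\Acc$ such that $\GSol(z) = (\mathrm{id}\otimes\tau)[G(z)]$, where $G(z):=(\Lb-zJ\otimes\cstarunit)^{-1}\in\CC^{m\times m}\otimes\Acc$ is the generalised resolvent. The resolvent identity $G-G^* = 2\imunit(\Im z)\,G(J\otimes\cstarunit)G^*$, combined with $J=e_1e_1^{t}$, yields $(\Im \GSol)_{ij} = (\Im z)\,\tau[g_ig_j^*]$, where $g_i:=G_{i1}\in\Acc$ is the $i$-th entry of the first column of $G$. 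Hence
\[
\Tr\Im\GSol(z) = (\Im z)\sum_{i=1}^m\tau(g_ig_i^*), \qquad \langle e_1,\Im\GSol(z)e_1\rangle = (\Im z)\,\tau(g_1g_1^*).
\]
Using the block form \eqref{eq:Lexpr1} together with \eqref{eq:Lin11}, solving $(\Lb-zJ\otimes\cstarunit)(Ge_1\otimes\cstarunit) = e_1\otimes\cstarunit$ gives $g_1=(p-z\cstarunit)^{-1}$ and $g_i = -(\widehat{\Lb}^{-1}\ell)_{i-1}\,g_1$ for $i\ge 2$, so the $C^*$-inequality $g_i^*g_i\le\|(\widehat{\Lb}^{-1}\ell)_{i-1}\|_\Acc^2\,g_1^*g_1$ forces
\[
\Tr\Im\GSol(z) \le \Bigl(1+\sum_{i=2}^m\|(\widehat{\Lb}^{-1}\ell)_{i-1}\|_\Acc^2\Bigr)\langle e_1,\Im\GSol(z)e_1\rangle.
\]

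The main obstacle is making sense of the prefactor, i.e.\ showing that $\widehat{\Lb}^{-1}$ is well-defined and of finite $C^*$-norm when evaluated on the semicircular/circular family, since those elements have fixed operator norms and Definition~\ref{def:saLin} guarantees invertibility of $\widehat\Lb$ only for arguments of small norm. This is precisely where nilpotency is used: the Neumann-series argument of Section~\ref{sec:aboutlinearizations} proving Lemma~\ref{lem:trivBound}, applied to $\widehat{\Lb}$ (which carries no spectral parameter) rather than to $\Lb-zJ\otimes\cstarunit$, produces a \emph{finite} polynomial expansion of $\widehat{\Lb}^{-1}$ in $(\xb,\yb,\yb^*)$ with bounded matrix coefficients, so that $\|\widehat{\Lb}^{-1}\ell\|_\Acc<\infty$ uniformly. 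The final claim $\mathrm{supp}(V_{11})=\mathrm{supp}(\Tr V)$ then follows at once: the Poisson representations $\Tr\Im\GSol(z) = (\Im z)\int|x-z|^{-2}\,\Tr V(dx)$ and $\langle e_1,\Im\GSol(z)e_1\rangle = (\Im z)\int|x-z|^{-2}\,V_{11}(dx)$ obtained from (i), together with the inequality of (iii), give $\Tr V\le C\,V_{11}$ as scalar measures by Stieltjes inversion, while $V_{11}\le\Tr V$ is immediate from the positive semidefiniteness of $V$.
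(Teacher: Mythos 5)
Your overall architecture coincides with the paper's (matrix Nevanlinna representation for (i)--(ii); realisation of $\GSol$ as $(\id\otimes\tau)$ of the generalised resolvent over a free semicircular family, plus a Schur-complement factorisation of the first column, for (iii)), but two steps are not justified as written.

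First, in (i) you deduce that $V$ is a \emph{finite} measure and that the pure Stieltjes form \eqref{eq:matSt} holds from $\sup_{y\ge 1}\|\GSol(\imunit y)\|<\infty$ alone. This implication is false for general Herglotz functions: e.g.\ the scalar function $m(z)=\int_{|x|\ge 1}|x|^{-1/2}(x-z)^{-1}\,dx$ is Herglotz and bounded at infinity, yet $\eta\,\Im m(\imunit\eta)\sim\eta^{1/2}\to\infty$ and the representing measure has infinite mass. What the representation theorem gives a priori is $B_1z+B_0+\int(\tfrac{1}{x-z}-\tfrac{x}{1+x^2})\,dV$ with only $\int(1+x^2)^{-1}dV<\infty$; boundedness kills $B_1$ but nothing more. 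The missing input is $\sup_{\eta}\eta\,\|\Im\GSol(\imunit\eta)\|<\infty$, which the paper proves from the same entrywise factorisation you use in (iii): each entry of $\Im\GSol$ equals $\eta\,\tau\big(a(\semic)\,(\cstarunit-\tilde q-z)^{-1}(\cstarunit-\tilde q-\bar z)^{-1}\,b(\semic)\big)$ with $a,b$ fixed polynomials, and the trivial bound $\eta^2\|(\cstarunit-\tilde q-\imunit\eta)^{-1}\|^2\le 1$ then gives finiteness. So your toolkit contains the fix, but you must carry out this step \emph{before} invoking \eqref{eq:matSt}, not after.

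Second, in (iii) your resolution of "the main obstacle" --- that nilpotency yields a finite Neumann series for $\widehat{\Lb}^{-1}$ --- does not follow from Definition~\ref{def:nilpotentlin}. The nilpotency hypothesis concerns the family $\{\proj' K_\alpha K_0^{-1}\proj',\dots\}$ compressed by the \emph{oblique} rank-one projection $\proj=JK_0^{-1}$, whose complementary range $\mathrm{ran}\,\proj'=\ker(e_1^tK_0^{-1}\,\cdot\,)$ need not be $\mathrm{span}(e_2,\dots,e_m)$; neither invertibility of $\widehat K_0$ nor nilpotency of $\{\widehat K_\gamma\widehat K_0^{-1}\}$ is assumed (these happen to hold for the standard linearization of Appendix~\ref{sec:abiglinearization}, but not for a general nilpotent, e.g.\ minimal, one). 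The paper therefore performs the generalized Schur complement with respect to $\pproj,\pproj'$ rather than with respect to the $(1,1)$-minor: the finite series $\Sb$ in \eqref{eq:aSchurC11}--\eqref{eq:aSchurC22} is built from $\{\proj'A_\gamma\proj'\}$, and those formulas show directly that every entry of the first column of the generalised resolvent is $b_i(\xb)\,(p-z\cstarunit)^{-1}$ with $b_i$ a fixed polynomial of norm controlled by the nilpotency degree --- which is exactly the statement $g_i=a_i g_1$ you need. (Alternatively, since $\widehat{\Lb}$ carries no spectral parameter, its invertibility and a $z$-independent norm bound for $\widehat{\Lb}^{-1}$ follow from Lemma~\ref{lem:trivBound} at $z=\imunit$ together with the quotient formula for block inverses; but some such argument must replace the claimed finite Neumann expansion.) With these two repairs the rest of your argument, including the derivation of $\mathrm{supp}(V_{11})=\mathrm{supp}(\Tr V)$ from $\Tr V\le C\,V_{11}$, goes through.
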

This lemma will be proven in Section~\ref{sec:MDE}.

\subsection{Polynomials  and linearization of random matrices} \label{sec:RM}

In this section we specialize the setup from Section~\ref{sec:linDyson} to the matrix setup, i.e. to the case when $\Acc=\CC^{N\times N}$ for some $N\in \NN$ equipped with the usual matrix operator norm, induced by the Euclidean norm on $\CC^N$, and Hermitian conjugation to define the $C^*$-algebra structure.
To indicate this special case in the notation, instead of $x_1, x_2, \ldots y_1, y_2,  \ldots$ we will use capital letters, $X_1, X_2, \ldots$ and $Y_1, Y_2,\ldots$ for the $N\times N$ matrices.
Moreover, we assume that these matrices are random and independent.
The self-adjoint matrices $X_\alpha$ will be Wigner-type matrices, i.e. they have independent elements up to Hermitian symmetry,  while the matrices $Y_\beta$ will have independent entries without any restriction. We assume the matrix elements are centered and their variances are $1/N$. We collect these assumptions in the following list:

\begin{ass}
  Let $\Xb^{(N)}:=\{X_{\alpha}^{(N)}, \alpha \in \llbr \alpha_* \rrbr\}$ and $\Yb^{(N)}:=\{Y_{\beta}^{(N)}, \beta \in \llbr \beta_* \rrbr\}$ be two families of $N\times N$ random matrices such that
  \begin{itemize}
    \setlength\itemsep{0em}
  \item[(\textbf{H1})] the joint family {$\Xb^{(N)}\cup \Yb^{(N)}$ is independent;}
  \item[(\textbf{H2})] $X_{\alpha}^{(N)}$ are Hermitian random matrices having {independent} centered entries with variance $N^{-1}$;
  \item[(\textbf{H3})] $Y_{\beta}^{(N)}$ are (non-Hermitian) random matrices having {independent} centered entries with variance $N^{-1}$;
  \item[(\textbf{H4})] entries of $X_{\alpha}^{(N)}$ and $Y_{\beta}^{(N)}$ satisfy the moment bounds 
    \begin{equation*}
      \max_{i,j\in \llbr N \rrbr} \Big(\max_{\alpha\in \llbr \alpha_* \rrbr} \ME{ |\sqrt{N} X_{\alpha}^{(N)}(i,j)|^{p}}+ \max_{\beta\in \llbr \beta_* \rrbr} \ME{ |\sqrt{N} Y_{\beta}^{(N)}(i,j)|^{p} }\Big)
      \leq
      C_{p}
      .
    \end{equation*}
  \end{itemize}  
\end{ass}

Another set of assumptions concerns the properties of the solution of the Dyson equation  for linearization \eqref{eq:MDE}.
To this end we introduce the notions of the \emph{$\kappa$-bulk} and the \emph{stability operator}, which plays a crucial role in the analysis of the stability of the solution of \eqref{eq:MDE}.

\begin{defn}[Density of states]\label{def:ds} Let $\GSol(z)$ denote the unique solution of the DEL \eqref{eq:MDE}  given    in Lemma~\ref{lem:MDEexistence}.
   Define the function $\rho : \RR \rightarrow [0, +\infty]$
  \begin{equation*}
    \rho (E)
    :=
    \lim_{\eta\rightarrow 0_{ + }}\frac{1}{\pi} \la e_1, \Im \GSol(E+\imunit \eta)\, e_1 \ra
      ,
  \end{equation*}
  where the limit exists due to Lemma~\ref{lem:StiltRep} almost everywhere.  If the limit does not exist at $E$, we set $\rho(E)=\infty$ for convenience, to make the definition unambiguous. 
  We will refer to $\rho$  as the (absolutely continuous part of the) \emph{density of states} of $p$.
\end{defn}
 It will follow from the proof of Lemma~\ref{lem:MDEexistence} (see  \eqref{eq:41} and \eqref{eq:45} below) that $\rho (E)$ does not depend on the choice of linearization. 

\begin{defn}[Bulk, $\epsB$-bulk]\label{def:Bkappa}
  We say that $E\in\RR$ belongs to the \emph{bulk} if $0<\rho(E)<\infty$.
  For any $\epsB>0$ we define the set $B_{\epsB}:=\{E\in\RR \, : \, \epsB<\rho(E)<\epsB^{-1} \}$, which we will call the $\epsB$-\emph{bulk}.
\end{defn}
 We remark that Definition~\ref{def:ds} slightly differs from the standard definition used for the matrix Dyson equation in \cite{AjanErdoKrug19},
where the density of states was defined via the trace of $\Im \GSol$ as $\widetilde \rho (E) :=
    \lim_{\eta\rightarrow 0_{ +}}\frac{1}{\pi m} \Tr \Im \GSol(E+\imunit \eta)$ and  not only its (1,1)-component. The current definition is justified since 
  our main object is the polynomial $p$ and not its linearization $\Lb$.
Note, that it follows from $(iii)$ in Lemma~\ref{lem:StiltRep} that $\rho(E)$ and $\widetilde\rho(E)$ are comparable,
i.e.,  \textit{a posteriori}  the bulk could have  been defined using  $\rho$ instead of $\widetilde{\rho}$.

From now on we fix $\epsB>0$.

\begin{defn}[Stability operator]
  Let $\SuOp$ be defined as in \eqref{eq:SuOp} and $M$ obtained in Lemma~\ref{lem:MDEexistence}.
  Then the operator 
  \begin{equation*}
    \GStOp: \CC^{m\times m} \rightarrow \CC^{m\times m}
    , \quad
    \GStOp\,[R]
     :=
    R- \GSol \, \SuOp[R]\,\GSol 
  \end{equation*}
  is called the \emph{stability operator} corresponding to the DEL \eqref{eq:MDE}.
\end{defn}

\begin{ass}
  There exists a constant $C_3$, depending only on $\epsB$ and the polynomial $p$, such that for any $z\in \CC_+$ with $\Re z \in B_{\epsB}$ and $0 < \Im z < \infty$ we have
  \begin{itemize}
    \setlength\itemsep{0em}
  \item[(\textbf{M1})] $\|\GSol(z)\|\leq C_3$; 
  \item[(\textbf{M2})] $\|\GStOp^{-1}(z)\|\leq C_3$.
  \end{itemize}  
\end{ass}

The local law is formulated using the following the notion of \emph{stochastic domination}.
\begin{defn}[Stochastic domination]
  Let $\mathscr{D}\subset \CC$ and let $(\Phi_w^{(N)})_{N\in \NN}$ and $(\Psi_w^{(N)})_{N\in \NN}$, $w\in \mathscr{D}$, be two sequences of nonnegative random variables.
  Then we say that $\Phi$ is stochastically dominated by $\Psi$ uniformly on $\mathscr{D}$  if for all $\epsE, D > 0$ there exists $C(\epsE, D)>0$ such that for all $N\in \NN$
  \begin{equation*}
    \Pr{ \Phi_w^{(N)} \geq N^{\epsE} \Psi_w^{(N)}}
    \leq
    \frac{C(\epsE, D)}{N^{D}}
  \end{equation*}
  with $C(\epsE, D)$ independent of $N$ and $w$.
  In this case we write $\Phi \prec \Psi$. 
\end{defn}

We are now ready to state our main result.
\begin{thm}[Local law for polynomials] \label{thm:mainPoly}
   Let $p\in \CC \la \xb, \yb, \yb^* \ra$ be a self-adjoint polynomial with $p(0,0,0)=\cstarunit$  and let $\Lb$ be a nilpotent linearization of $p$ be defined as in \eqref{eq:linearization}.
  Let $\GSol(z)$ be a solution of the corresponding \emph{DEL} \eqref{eq:MDE} constructed as in Lemma~\ref{lem:MDEexistence}.
  Suppose that the families of random matrices $\Xb^{(N)}, \Yb^{(N)}$ satisfy conditions \textup{(\textbf{H1})-(\textbf{H4})} and that $\GSol(z)$ satisfies \textup{(\textbf{M1})-(\textbf{M2})}  for some fixed $\epsB>0$. 
  Then  the local law holds for $p(\Xb^{(N)},\Yb^{(N)}, [\Yb^{(N)}]^*)$ in the $\epsB$-bulk up to the optimal scale, i.e., for any $\gamma>0$
  \begin{equation}
    \label{eq:locallawpoly}
    \max_{i,j\in\llbr N \rrbr} \|\PRes_{ij}(z)-\la e_1, \GSol(z)\, e_1 \ra \delta_{ij}\|
    \prec
    \sqrt{\frac{1}{N\Im z}}
    ,\qquad
    \Big\| \frac{1}{N} \sum_{i=1}^{N}\PRes_{ii}(z)-\la e_1, \GSol(z)\, e_1 \ra \Big\|
    \prec
    \frac{1}{N\Im z}
  \end{equation}
  uniformly for $z\in D_{\kappa, \gamma}$ with $D_{\kappa, \gamma}:= \{z\in \CC\; : \;\Re z\in B_{\epsB}, \; N^{-1+\gamma} \leq \Im z \leq 1\}$, where $\PRes(z)$ is the resolvent matrix of the polynomial 
  \begin{equation*}
    \PRes(z)
    :=
    \Big(p(\Xb^{(N)},\Yb^{(N)}, [\Yb^{(N)}]^*)-z\otimes I_N\Big)^{-1}
    .
  \end{equation*}
\end{thm}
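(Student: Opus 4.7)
My plan is to reduce the local law for the polynomial $p$ to a local law for the linearized Kronecker-type matrix $\Lb$, but with respect to the \emph{generalized} resolvent $G(z) := (\Lb - zJ\otimes I_N)^{-1}$ rather than the ordinary one. The Schur complement formula \eqref{eq:schur} identifies $\PRes(z)$ with the upper-left $N\times N$ block of $G(z)$, and $\la e_1,\GSol(z)\,e_1\ra$ is precisely the reference value in \eqref{eq:locallawpoly}, so the theorem will follow once the upper-left block of $G$ is shown close to $\la e_1,\GSol\,e_1\ra I_N$ in the entrywise and averaged senses.

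The overall strategy follows the scheme of \cite{AltErdoKrugNemi_Kronecker} for Kronecker random matrices. First I would derive an approximate self-consistent equation for the block trace $\la G\ra := N^{-1}\sum_{i=1}^N G^{(ii)}$, viewed as an element of $\CC^{m\times m}$. Standard resolvent identities together with a Schur/cumulant expansion over the independent centered entries of $\Xb^{(N)},\Yb^{(N)}$ with variance $N^{-1}$ yield, up to a fluctuating remainder $\mathcal{D}(z)$,
\begin{equation*}
-G^{-1}(z) = zJ\otimes I_N - K_0\otimes I_N + \SuOp[\la G\ra]\otimes I_N + \mathcal{D}(z),
\end{equation*}
where the operator $\SuOp$ of \eqref{eq:SuOp} arises from the Wigner-type variance of $\Xb$ (giving $K_\alpha\la G\ra K_\alpha$) and from the independent entries of $\Yb$ (giving $L_\beta\la G\ra L_\beta^* + L_\beta^*\la G\ra L_\beta$). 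The remainder $\mathcal{D}(z)$ is controlled by the usual large deviation estimates for quadratic forms via \textbf{(H4)} and by the fluctuation averaging phenomenon. Subtracting the DEL \eqref{eq:MDE} and linearizing yields an identity of the form $\la G\ra - \GSol = \GStOp^{-1}[\text{error}] + \text{quadratic corrections}$, so that invertibility of the stability operator \textbf{(M2)} converts error bounds into closeness bounds, while \textbf{(M1)} controls the perturbative expansion.

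This feeds into a bootstrap in $\eta=\Im z$, descending along a fine $N^{-C}$-grid from $\eta=1$ down to $\eta=N^{-1+\gamma}$. At the initialization $\eta=1$, the spectral norms $\|X_\alpha\|,\|Y_\beta\|$ are $O(1)$ with overwhelming probability by \textbf{(H4)} and classical operator-norm bounds for Wigner and i.i.d.\ matrices, so the a priori estimate of Lemma~\ref{lem:trivBound} yields $\|G(z)\|=O(1)$ on this event, which together with \textbf{(M1)} starts the bootstrap. At each subsequent step the preceding bound plus Lipschitz continuity of $G$ in $z$ gives a weak control that, inserted into the self-consistent equation and combined with \textbf{(M2)}, upgrades to the sharp averaged bound $\prec (N\eta)^{-1}$ and the entrywise bound $\prec (N\eta)^{-1/2}$ matching the right-hand sides of \eqref{eq:locallawpoly}.

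The principal obstacle is that the ordinary Kronecker local law of \cite{AltErdoKrugNemi_Kronecker} relies on the trivial Hermitian bound $\|(\Hb-\zeta I)^{-1}\|\le 1/\Im\zeta$, which \emph{fails} for the generalized resolvent because the spectral shift $zJ\otimes I_N$ has rank only $N<mN$, so $\Lb - zJ\otimes I_N$ is not even normal. This is exactly where the nilpotent structure of the linearization (Definition~\ref{def:nilpotentlin}) is essential: Lemma~\ref{lem:trivBound} supplies the replacement bound $\|G(z)\|\lesssim (\Im z)^{-1}$ up to polynomial factors in $\|\Xb\|,\|\Yb\|$ that are tamed on overwhelming-probability events. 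Once this substitute a priori bound is secured, the remainder of the argument runs essentially in parallel with \cite{AltErdoKrugNemi_Kronecker}, with \textbf{(M1)} and \textbf{(M2)} playing the roles that boundedness of the MDE solution and invertibility of its stability operator play in the standard Kronecker local law.
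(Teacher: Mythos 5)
Your reduction via the Schur complement formula \eqref{eq:schur} is exactly the paper's (the proof of Theorem~\ref{thm:mainPoly} is literally Theorem~\ref{thm:main} plus \eqref{eq:schur}), and your identification of the first obstacle --- no trivial norm bound for the generalized resolvent, cured by the nilpotency through Lemma~\ref{lem:trivBound} --- is correct. But your plan has a genuine gap: you never address the \emph{second} consequence of the non-normality of $\Hb - zJ\otimes I_N$, namely the failure of the Ward identity. Your self-consistent-equation scheme needs, at every step of the bootstrap, bounds of the form $\sum_k |G_{kl}|^2 \lesssim \Im G_{ll}/\eta \lesssim 1/\eta$ to run the large deviation estimates for the quadratic forms in $\mathcal D(z)$ and the fluctuation averaging. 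For an ordinary resolvent this is the Ward identity $G^*G=\Im G/\eta$. For the generalized resolvent one only has $G^*(\eta J\otimes I_N)G=\Im G$, which controls $G^*(J\otimes I_N)G$ but not $G^*G$, because $J$ is rank one in the $m\times m$ factor. The a priori bound $\|G\|\lesssim \eta^{-1}$ from Lemma~\ref{lem:trivBound} only gives $\sum_k|G_{kl}|^2\lesssim N\eta^{-2}$, which is off by a factor $N/\eta$ and is useless for closing the bootstrap at scale $(N\eta)^{-1/2}$.

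The paper's proof of Theorem~\ref{thm:main} resolves this by running the whole argument simultaneously for the $\omega$-regularized objects $\Ress{\imunit\epsG}(z)=(\Hb-zJ\otimes I_N-\imunit\epsG\, I\otimes I_N)^{-1}$ and $\Soll{\imunit\epsG}(z)$: the regularized resolvent at spectral parameter $\imunit(\eta+\epsG)$ \emph{is} a genuine resolvent, so the Kronecker local law of \cite{AltErdoKrugNemi_Kronecker} (Lemma~\ref{lem:LLgoodB}) and the Ward identity apply to it; the resolvent identity then transfers the resulting bounds on $\Lambda_{\mathrm{hs}}^{\epsG}$ and $\Lambda_{\mathrm{w}}^{\epsG}$ back to the generalized resolvent (see \eqref{eq:HSbound} and the surrounding estimates), and the regularization is removed by a continuity argument in $\epsG$ (from $\epsG_1$ down to $0$ at $\eta=1$) before the descent in $\eta$. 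Lemma~\ref{lem:RegBnd} supplies the uniform-in-$\epsG$ bounds on $\Soll{\imunit\epsG}$ and $\GStOp_{\imunit\epsG}^{-1}$ needed for this. Without this device, or an equivalent substitute for the Ward identity adapted to the generalized resolvent, your bootstrap cannot be closed.
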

 Note that the typical distance between two adjacent eigenvalues in the bulk is of order $N^{-1}$. 
Thus the exponent  in the bound $\Im z \geq N^{-1+\gamma}$  is the lowest possible 
that allows for a deterministic limit of the resolvent.
In \eqref{eq:locallawpoly} we formulated the local law in the entrywise
 and in the tracial sense, but it is easy to extend the first result to a more general \emph{anisotropic sense}
 that approximates  $\langle \ub, g(z) \vb\rangle$  for any deterministic vectors $\ub, \vb\in \CC^N$
 by adapting the method from \cite[Section 7]{BloeErdoKnowYauYin14} or \cite[Section~6.1]{AjanErdoKrug17} to Kronecker random matrices in the spirit of \cite{AltErdoKrugNemi_Kronecker}.

We now comment on the assumptions \textup{(\textbf{M1})-(\textbf{M2})}. We expect that these hold for an
appropriate linearization for any self-adjoint polynomial, but this remains an open question in full generality.
 However, in Section~\ref{sec:example} we prove \textup{(\textbf{M1})-(\textbf{M2})} for a general homogeneous polynomial of degree two in Wigner matrices.
For other polynomials we remark that these two assumptions can be  checked numerically since they 
require the solution $M(z)$ of the Dyson equation for a linearization \eqref{eq:MDE} that can be computed by an effective fixed point iteration.
The numerics can be speeded up by reducing the  dimension of the \emph{DEL}, e.g.
by  considering the minimal linearization instead of the standard one, see
Appendix~\ref{sec:aLinComp} for some examples.

 Local laws provide information that can be used to estimate with relatively high precision the  locations of individual eigenvalues of the corresponding random matrix, as well as to show the delocalization of its eigenvectors.
These results have been obtained many times in the literature, therefore we state them without proofs and refer the interested reader to, e.g.,
\cite[Section~5]{AjanErdoKrug17}.
\begin{cor}[Bulk rigidity]
  Let $\lambda_i$, $1\leq i\leq N$, be the eigenvalues of $p(\Xb^{(N)},\Yb^{(N)},[\Yb^{(N)}]^*)$ in the increasing order.
  For each $E\in B_{\epsB}$ denote by $\iota(E)$ the index of the eigenvalue that is typically close to $E$, i.e.,
  \begin{equation}\label{eq:iota}
    \iota(E)
    :=
    \Big\lceil N \int_{-\infty}^{E} \rho(dx) \Big\rceil
    .
  \end{equation}
  Then
  \begin{equation*}
    \sup \{ |\lambda_{\iota(E)}-E| \, : \, E\in B_{\epsB}\}
    \prec
    \frac{1}{N}
    .
  \end{equation*}
\end{cor}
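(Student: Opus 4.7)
The plan is to derive the rigidity statement from the local law \eqref{eq:locallawpoly} via the usual Helffer--Sj\"ostrand route, reducing everything to an estimate on the empirical eigenvalue counting function. Set
\[
  \mathfrak{n}(E) := \frac{1}{N}\,\#\{i : \lambda_i\leq E\},
  \qquad
  n(E) := \int_{-\infty}^{E}\rho(dx),
\]
so that $n(E) = \iota(E)/N + O(1/N)$ by definition of $\iota$. Note that the tracial part of \eqref{eq:locallawpoly} reads, after observing that $\tfrac{1}{N}\sum_i \PRes_{ii}(z) = \int (x-z)^{-1}\mathfrak{n}(dx)$ and, by Lemma~\ref{lem:StiltRep}, $\langle e_1,M(z)e_1\rangle = m_\rho^\infty + \int (x-z)^{-1}\rho(dx)$, exactly as a bound on the difference of Stieltjes transforms of $\mathfrak{n}$ and $\rho$ (the constant $\langle e_1,M^\infty e_1\rangle$ cancels after taking imaginary parts in what follows).

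The first step is to prove
\[
  \sup_{E\in B_{\epsB}}|\mathfrak{n}(E) - n(E)|\;\prec\;\frac{1}{N}.
\]
For a fixed $E_0\in B_{\epsB}$ I would pick a smooth indicator $\chi_{E_0}$ of $(-\infty,E_0]$ supported on a fixed compact interval that contains $\mathrm{supp}(\rho)$ (using the trivial a priori norm bound on $p(\Xb,\Yb,\Yb^*)$ via \textbf{(H4)} to control any eigenvalues escaping this interval with arbitrarily high probability). Then one applies the Helffer--Sj\"ostrand formula
\[
  \chi_{E_0}(\lambda)
  =
  \frac{1}{2\pi}\int_{\mathbb{R}^2}\frac{\partial_{\bar z}\widetilde{\chi}_{E_0}(z)}{\lambda-z}\,dx\,dy,
\]
with $\widetilde{\chi}_{E_0}$ a standard quasi-analytic extension, and integrates against $d\mathfrak{n}-d\rho$. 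After splitting the $y$-integral at $\eta_0:=N^{-1+\gamma}$, the region $|y|\geq\eta_0$ is handled directly by the tracial local law (which provides a $1/(Ny)$ gain, integrable up to the cutoff scale), while the region $|y|<\eta_0$ is treated by integrating by parts in $y$ and bounding $y\,\Im m(E+iy)$ by its value at $y=\eta_0$, using monotonicity in $y$ of $y\,\Im m$ for both $\mathfrak{n}$ and $\rho$. Taking a union bound over a polynomially dense net in $B_{\epsB}$ (the counting function is monotone and changes by at most $1/N$ between lattice points, while $n$ is Lipschitz since $\rho\leq\epsB^{-1}$) upgrades the pointwise estimate to the uniform one.

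The second step converts the counting function bound into eigenvalue rigidity. By definition $\mathfrak{n}(\lambda_{\iota(E)})=\iota(E)/N = n(E)+O(1/N)$, hence
\[
  |n(\lambda_{\iota(E)}) - n(E)|
  \leq
  |n(\lambda_{\iota(E)}) - \mathfrak{n}(\lambda_{\iota(E)})| + O(1/N)
  \;\prec\;
  \frac{1}{N}.
\]
Since $\rho\geq \epsB$ on $B_{\epsB}$ and $B_{\epsB}$ has a positive distance from the complement of the bulk (so $\lambda_{\iota(E)}$ lies in the bulk with overwhelming probability, which follows from the same counting function estimate applied at the edges of $B_{\epsB}$), the map $n$ is bi-Lipschitz on a neighbourhood of $E$, and we conclude $|\lambda_{\iota(E)}-E|\prec 1/N$ uniformly in $E\in B_{\epsB}$.

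The only technically delicate point is justifying the Helffer--Sj\"ostrand integration near the real axis with the cutoff at the optimal scale $\eta_0=N^{-1+\gamma}$; this is a well-documented manipulation (as in \cite[Section~5]{AjanErdoKrug_PTRF1}) and poses no new difficulty here because the local law \eqref{eq:locallawpoly} holds in precisely the required domain $D_{\kappa,\gamma}$ and in the required two senses (tracial average and entrywise) with the optimal exponents. Everything else is a direct application of the local law together with the bulk regularity $\epsB<\rho<\epsB^{-1}$ provided by the assumption $E\in B_{\epsB}$.
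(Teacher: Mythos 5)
The paper itself states this corollary without proof, deferring to \cite[Section~5]{AjanErdoKrug_PTRF1}, so there is no in-paper argument to compare against; your Helffer--Sj\"ostrand route is the standard one, and your second step (converting a counting-function estimate into rigidity via the bi-Lipschitz property of $n$ on the bulk) is fine. However, your first step has a genuine gap. You claim that $\sup_{E\in B_{\epsB}}|\mathcal{N}(E)-n(E)|\prec 1/N$ (with $\mathcal{N}$ the empirical counting function) follows by applying Helffer--Sj\"ostrand to a smoothed indicator of $(-\infty,E_0]$ whose quasi-analytic extension is supported over a compact set containing all of $\mathrm{supp}(\rho)$, and that the region $|y|\geq\eta_0$ is ``handled directly by the tracial local law.'' But \eqref{eq:locallawpoly} is only available for $\Re z\in B_{\epsB}$: the Helffer--Sj\"ostrand integral necessarily runs over spectral parameters whose real parts lie at the edges of the spectrum, in possible gaps, and in regions where $\rho<\epsB$ or $\rho>\epsB^{-1}$, i.e.\ outside $D_{\kappa,\gamma}$, where the theorem gives no control whatsoever (and where \textup{(\textbf{M1})--(\textbf{M2})} are not even assumed). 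So the closing assertion that the local law ``holds in precisely the required domain'' is false for the \emph{absolute} counting function, and the anchoring of the index $\iota(E)=\lceil N\int_{-\infty}^{E}\rho(dx)\rceil$ --- which requires counting \emph{all} eigenvalues below $E$, not just those in the bulk --- is exactly the point your argument does not cover.

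The missing ingredient is the global-scale input. Decompose $\chi_{E_0}=f+g$, where $f$ is a fixed, $N$-independent smooth function agreeing with $\chi_{E_0}$ outside a fixed window around $E_0$ contained in the bulk, and $g=\chi_{E_0}-f$ is the sharp part supported in that window. The contribution of $f$ is controlled to precision $\prec 1/N$ by Proposition~\ref{pr:speed} (which holds without \textup{(\textbf{M1})--(\textbf{M2})} and for arbitrary fixed smooth test functions), while the contribution of $g$ is handled by your Helffer--Sj\"ostrand argument with cutoff $\eta_0=N^{-1+\gamma}$, now legitimately, since the relevant spectral parameters have real part in $B_{\epsB}$. (A residual uniformity issue remains for $E_0$ within the fixed window-width of the boundary of $B_{\epsB}$; this is inherent in the statement and is resolved in the standard way by working on a slightly smaller bulk set.) A smaller imprecision: on the net argument, $\mathcal{N}$ is not a priori guaranteed to change by only $1/N$ between lattice points; one should instead use the monotone sandwich $\mathcal{N}(E_j)-n(E_{j+1})\leq \mathcal{N}(E)-n(E)\leq \mathcal{N}(E_{j+1})-n(E_j)$ together with the Lipschitz continuity of $n$.
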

\begin{cor}[Delocalization of bulk eigenvectors]
  For $1\leq i\leq N$ denote by  $\mathbf{u}_i\in \CC^{N}$ the normalized eigenvector of $p(\Xb^{(N)},\Yb^{(N)},[\Yb^{(N)}]^*)$ that corresponds to the eigenvalue $\lambda_i$.
  Then for any deterministic unit vector $\mathbf{b}\in \CC^{N}$ and $E\in B_{\epsB}$ we have
    \begin{equation*}
    |\mathbf{b} \cdot \mathbf{u}_{\iota(E)}|
    \prec
    \frac{1}{\sqrt{N}}
    ,
  \end{equation*}
  where $\iota(E)$ is defined as in \eqref{eq:iota}.
\end{cor}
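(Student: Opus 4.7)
The proof proceeds by the standard resolvent/spectral decomposition argument combined with the local law. Because the statement involves a general deterministic vector $\mathbf{b}\in\CC^N$, the natural input is the anisotropic version of the local law, which by the remark following Theorem~\ref{thm:mainPoly} can be obtained from \eqref{eq:locallawpoly} by adapting the method of \cite[Section 7]{BloeErdoKnowYauYin} or \cite[Section~6.1]{AjanErdoKrug_PTRF1} to our Kronecker setting. Thus I will take as input the statement that for any deterministic unit vectors $\mathbf{a},\mathbf{b}\in\CC^N$,
\begin{equation*}
  \bigl|\la \mathbf{a}, \PRes(z)\mathbf{b}\ra - \la e_1,\GSol(z)e_1\ra\,\la \mathbf{a},\mathbf{b}\ra\bigr|
  \prec
  \sqrt{\tfrac{1}{N\Im z}}
\end{equation*}
uniformly for $z\in D_{\epsB,\gamma}$.

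The plan is as follows. Fix $E\in B_{\epsB}$ and set $\eta := N^{-1+\gamma}$ for an arbitrarily small $\gamma>0$, and put $z:=E+\imunit\eta$. Using the spectral decomposition of the self-adjoint matrix $p(\Xb^{(N)},\Yb^{(N)},[\Yb^{(N)}]^*)$ in its eigenbasis $\{\mathbf{u}_k\}_{k=1}^N$ one has
\begin{equation*}
  \Im \la \mathbf{b}, \PRes(z)\mathbf{b}\ra
  \;=\;
  \sum_{k=1}^{N} \frac{\eta\,|\mathbf{b}\cdot\mathbf{u}_k|^2}{(E-\lambda_k)^2+\eta^2}
  \;\geq\;
  \frac{\eta\,|\mathbf{b}\cdot\mathbf{u}_{\iota(E)}|^2}{(E-\lambda_{\iota(E)})^2+\eta^2}.
\end{equation*}
By the bulk rigidity corollary, $|E-\lambda_{\iota(E)}|\prec N^{-1}$, which is much smaller than $\eta$ for any $\gamma>0$, so with overwhelming probability the denominator on the right-hand side is at most $2\eta^2$. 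Rearranging gives
\begin{equation*}
  |\mathbf{b}\cdot\mathbf{u}_{\iota(E)}|^2
  \;\leq\;
  2\eta\,\Im \la \mathbf{b}, \PRes(z)\mathbf{b}\ra.
\end{equation*}

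Now I apply the anisotropic local law with $\mathbf{a}=\mathbf{b}$: it yields $|\la \mathbf{b},\PRes(z)\mathbf{b}\ra-\la e_1,\GSol(z)e_1\ra|\prec(N\eta)^{-1/2}$, and by condition \textup{(\textbf{M1})} together with Lemma~\ref{lem:StiltRep} the deterministic term $\la e_1,\GSol(z)e_1\ra$ is bounded by a constant depending only on $\epsB$ and $p$. Hence $\Im\la\mathbf{b},\PRes(z)\mathbf{b}\ra\prec 1$, and combining with the previous display,
\begin{equation*}
  |\mathbf{b}\cdot\mathbf{u}_{\iota(E)}|^2 \;\prec\; \eta \;=\; N^{-1+\gamma}.
\end{equation*}
Since $\gamma>0$ is arbitrary, the definition of stochastic domination allows us to absorb the factor $N^{\gamma}$, yielding $|\mathbf{b}\cdot\mathbf{u}_{\iota(E)}|\prec N^{-1/2}$, as claimed.

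The only non-routine ingredient is the anisotropic upgrade of \eqref{eq:locallawpoly}; this is where I expect the main work to lie, but since the matrix Dyson equation machinery and the Kronecker setup are already in place from Theorem~\ref{thm:mainPoly}, this extension follows along the well-established lines cited above and does not require new ideas. All remaining steps are standard spectral-decomposition estimates.
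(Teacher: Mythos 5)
Your argument is correct and is exactly the standard spectral-decomposition-plus-rigidity proof that the paper invokes by reference (to \cite[Section~5]{AjanErdoKrug_PTRF1}) without writing out. You also correctly identify that for a general deterministic $\mathbf{b}$ the entrywise law \eqref{eq:locallawpoly} is insufficient and the anisotropic upgrade is the real input; the paper itself only asserts this upgrade in the remark after Theorem~\ref{thm:mainPoly}, so your proof carries the same (acknowledged) dependency as the paper's.
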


Although the main focus  of this paper is the local law and its consequences, we remark that our method also gives an optimal $1/N$ speed of convergence 
of the empirical spectral distribution of any self-adjoint polynomial $p(\Xb^{(N)},\Yb^{(N)}, [\Yb^{(N)}]^*)$ to its limiting density on the global scale. More precisely, we have the following:

\begin{pr}[Speed of convergence]\label{pr:speed}
   Let $p\in \CC \la \xb, \yb, \yb^* \ra$ be a self-adjoint polynomial with $p(0,0,0)=\cstarunit$   and let $\rho$ be the density of states.
     Suppose that the families of random matrices $\Xb^{(N)}, \Yb^{(N)}$ satisfy conditions \textup{(\textbf{H1})-(\textbf{H4})}.
 Let $\lambda_1, \ldots, \lambda_N$ be the eigenvalues of $p(\Xb^{(N)},\Yb^{(N)},[\Yb^{(N)}]^*)$ and let $f$ be a smooth function on $\RR$. Then
 \begin{equation}\label{fp}
    \bigg| \frac{1}{N} \sum_{i=1}^N f(\lambda_i) - \int_\RR f(x)\rho(dx)\bigg| \prec \frac{1}{N}.
  \end{equation}
 In particular, we have
 \begin{equation}\label{pp}
    \bigg| \frac{1}{N} \Tr p(\Xb^{(N)},\Yb^{(N)},[\Yb^{(N)}]^*)  - \int_\RR x\rho(dx)\bigg| \prec \frac{1}{N}.
 \end{equation}
  \end{pr}
Note that  this result does not assume the  conditions \textup{(\textbf{M1})-(\textbf{M2})}. In fact,  
\eqref{pp} shows that the speed of convergence in the customary definition of asymptotic freeness of
the random variables $(\Xb^{(N)},\Yb^{(N)},[\Yb^{(N)}]^*)$ is of order $1/N$.

 In the rest of the paper, whenever this does not cause any confusion, we will suppress the $N$-dependence in $\Xb$, $\Yb$ and other $N$-dependent objects.

\section{Linearizations: nilpotency and a priori bound}
\label{sec:aboutlinearizations}

In this section we prove that the linearizations  of polynomials   constructed in  Appendix~\ref{sec:aLinCM} possess some nice properties.
More precisely, we show in Lemmas \ref{lem:bignilp} and \ref{lem:anilpotency} that both the standard and the minimal  linearizations  are nilpotent, and then, in Section~\ref{sec:atrivialbound}, we prove that the bound \eqref{eq:trivResBound} holds for the generalized resolvents of any nilpotent linearization. 

 Note, that in  Appendix~\ref{sec:aLinCM} we consider linearizations of noncommutative polynomials in \emph{self-adjoint} variables only.
We start this section with a short remark explaining why this is indeed enough. 

Define the real and imaginary parts of an element $a\in\Acc$ as
  \begin{equation*}
    \Re a
    :=
    \frac{a+a^*}{2}
    ,\quad
    \Im a
    :=
    \frac{a-a^*}{2\,\imunit}
  \end{equation*}
  so that $\Re a$ and $\Im a$ are self-adjoint and $a=\Re a +\imunit \Im a$.
  Then \eqref{eq:linearization} can be rewritten as
  \begin{equation}\label{eq:aLsum3}
    \Lb
    =
    K_0\otimes \cstarunit - \sum_{\gamma=1}^{\gamma_*} K_{\gamma}\otimes x_{\gamma}
    ,
  \end{equation}
  where $\gamma_*:= \alpha_*+2\beta_*$ and we defined for $\beta\in \llbr \beta_*\rrbr$
  \begin{equation}\label{eq:aNewK}
    x_{\alpha_*+\beta}
    :=
    \sqrt{2}\Re x_{\beta}
    ,\quad
    x_{\alpha_*+\beta_*+\beta}
    :=
    \sqrt{2}\Im x_{\beta}
    ,\quad
    K_{\alpha_*+\beta}
    :=
    \sqrt{2}\Re L_{\beta}
    ,\quad
    K_{\alpha_*+\beta_*+\beta}
    :=
    -\sqrt{2} \Im L_{\beta}
    .
  \end{equation}
  We can now use formulas \eqref{eq:linearization}, \eqref{eq:aLsum3} and \eqref{eq:aNewK} to switch between linearizations of $q\in \CC \la \xb, \yb,\yb^*\ra $ and $\tilde{q} \in \CC \la \xb, \Re \yb, \Im \yb \ra$ with $\tilde{q}(\xb,\Re \xb, \Im \xb)=q(\xb,\yb,\yb^*)$.
  Clearly $q(0,0,0)=0$ is equivalent to $\tilde q(0,0,0)=0$ which we will assume in the sequel.
  Therefore, in the current section and Section~\ref{sec:MDE}, with a slight abuse of notation, by defining $\xb = (x_{\gamma},\, \gamma \in \llbr \gamma_*\rrbr)$, it will be enough to consider only self-adjoint polynomials only of the form $\tilde q(\xb)$ with $\tilde q(0)=0$ and  linearizations $\Lb$ of $\tilde{q}(\xb)$ of the form \eqref{eq:aLsum3}.
  In Section~\ref{sec:locallaw}, we will go back to the linearization \eqref{eq:linearization}.

\subsection{Joint nilpotency} \label{sec:Nilp}

In the next lemma we show that the standard linearization constructed in  Appendix~\ref{sec:abiglinearization} is nilpotent.
\begin{lem}[Nilpotency of the standard linearization]\label{lem:bignilp}
  Let  $\tilde{q} \in \CC \la \xb  \ra$  be a self-adjoint polynomial satisfying $\tilde q(0)=0$.
  Let
  \begin{equation*}
    \Lb =     K_0\otimes \cstarunit - \sum_{\gamma=1}^{\gamma_*} K_{\gamma}\otimes x_{\gamma}
  \end{equation*}
   be a  linearization of $\cstarunit - \tilde{q}$ constructed in  Appendix~\ref{sec:abiglinearization}.
  Then $\Lb$  is  nilpotent.
\end{lem}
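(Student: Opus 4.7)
The plan is to work directly with the explicit block structure of the standard linearization $\Lb$ constructed in Appendix~\ref{sec:abiglinearization}. This linearization is built in two stages: first, for each monomial $c_j\, x_{i_1^{(j)}}\cdots x_{i_{k_j}^{(j)}}$ appearing in $\tilde q$, one writes an elementary ``degree-ladder'' block (with $-I$ on one sub-diagonal and a single $x_\gamma$-entry on the opposite corner, so that the Schur complement of the top corner reproduces the monomial); then these elementary blocks are assembled block-diagonally, sharing a common first row and column that feed the coefficients $c_j$ into the $(1,1)$-entry. I would begin by writing this structure out explicitly and reading off $K_0$ and each $K_\gamma$ in the decomposition \eqref{eq:aLsum3}.

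Conditions (i) and (ii) of Definition~\ref{def:nilpotentlin} are then essentially automatic. The constant part $K_0$ consists of the $(1,1)$-entry equal to $1$ together with a block-triangular arrangement of $-I$ blocks coming from the elementary ladders, so $K_0^{-1}$ exists and is itself block triangular. Condition (ii), $\langle e_1, K_0^{-1} e_1\rangle = 1$, is equivalent to $\tilde q(0)=0$, as observed in the discussion following Definition~\ref{def:nilpotentlin}, and this holds by assumption.

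The heart of the proof is condition (iii). The projection $\proj = JK_0^{-1}$ is rank-one with image $\mathrm{span}\{e_1\}$, so $\proj'$ kills the first row/column of any matrix sandwiched between two copies of it; this is precisely the ``coefficient-feeding'' portion of $K_0^{-1}$ that couples the different monomial ladders. Each $K_\gamma$ is supported only at the shift positions inside the ladders where the variable $x_\gamma$ appears. Multiplying $K_\gamma$ by the block-triangular $K_0^{-1}$ and sandwiching by $\proj'$ therefore produces a matrix which, in the natural block basis indexed by the internal levels of each monomial ladder, is \emph{strictly} block triangular with a shift pattern common to all $\gamma$. Any product of $n$ such matrices must then vanish once $n$ exceeds the total number of interior levels; since this number is bounded by $m$, the whole family is jointly nilpotent with index at most $m$.

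The main obstacle I anticipate is the bookkeeping needed to verify that removing the shared first row and column by $\proj'$ really does decouple the monomial ladders, so that products of $\proj' K_\gamma K_0^{-1}\proj'$ decompose block-diagonally into products within each ladder, where strict triangularity is transparent. The key algebraic observation that makes this work is that in the standard linearization the \emph{only} interaction between different monomial blocks takes place through the common corner row/column, and $\proj'$ kills exactly this interaction; once this is made precise, joint nilpotency reduces to the finite-level strict-triangular nilpotency inside each elementary ladder.
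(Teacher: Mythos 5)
Your proposal is correct and follows essentially the same route as the paper: conditions (i)--(ii) are read off from the explicit form of $K_0$ in \eqref{eq:apermut}, and (iii) comes from the identity $\proj' K_\gamma K_0^{-1}\proj'=\Theta\widehat K_\gamma\widehat K_0^{-1}\Theta^{-1}$ together with the observation that each variable entry of a ladder sits exactly one step ahead of the $\cstarunit$-entries of the permutation $\widehat K_0$ (the paper's relation \eqref{eq:alphaPerm}), so that $\widehat K_\gamma\widehat K_0^{-1}=\sum_i c_i^\gamma E_{i,i+1}$ is strictly upper triangular. One inaccuracy to fix when writing this up: the constant block of $K_0$ is a phased symmetric \emph{permutation} matrix (each ladder contributes an anti-diagonal of $-\cstarunit$'s, and rule (R3) additionally swaps the $U$ and $U^*$ super-blocks), not a block-triangular matrix as you describe; this does not harm the argument, and the ladder-by-ladder decoupling you flag as the main obstacle is in fact not needed, since joint nilpotency already follows from the common strict upper triangularity of all the $\widehat K_\gamma\widehat K_0^{-1}$ in one fixed basis.
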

\begin{proof}
 First of all, note that (i) and (ii) in the definition of the nilpotent linearization follow directly from \eqref{eq:apermut}. 
Thus, in order to finish the proof we need to show that the family
\begin{equation*}
        \Big\{ \proj' K_\gamma K_0^{-1} \proj', \; : \;  \gamma \in \llbr \gamma_* \rrbr \Big\}
\end{equation*}
is nilpotent, where, we recall,
  \begin{equation*}
    \proj
    :=
    J K_0^{-1}
    ,\quad
    \proj'
    :=
    I-\proj
    .
  \end{equation*}

  From the representation \eqref{eq:apermut}  we have that $\proj = J K_0^{-1} = J= K_0^{-1}J$, hence $\proj$ commutes with $K_0^{-1}$.
  This implies that for any $\gamma\in \llbr \gamma_* \rrbr$
  \begin{equation*}
    \proj' K_{\gamma} K_0^{-1} \proj'
    =
    \proj' K_{\gamma}(\proj + \proj') K_0^{-1} \proj'
    =
    \proj' K_{\gamma} \proj' K_0^{-1} \proj'
    =
    \Theta \widehat{K}_{\gamma}  \widehat{K}_0^{-1} \Theta^{-1}
    ,
  \end{equation*}
  where  we recall the structure of $K_0$ and $ K_\gamma$, explicitly indicating  their minors after separating the  first row and column:
  \begin{equation*}
    K_0
    = \left(
      \begin{array}{c|ccc}
        1 & 0 & \cdots & 0
        \\ \hline
        0 & & &
        \\
        \vdots & & \Theta \widehat{K}_0 &
        \\
        0 & & &
      \end{array}
    \right)
    ,\quad
    K_{\gamma}
    = \left(
      \begin{array}{c|ccc}
        * &* & \cdots & *
        \\ \hline
        * & & &
        \\
        \vdots & & \Theta \widehat{K}_{\gamma} &
        \\
        *& & &
      \end{array}
    \right)
    ,
  \end{equation*}
  with $\widehat{K}_0,\widehat{K}_{\gamma}\in \{0,1\}^{(m-1)\times (m-1)}$ and $\widehat{K}_0$ being a permutation matrix.
  Stars indicate arbitrary unspecified matrix elements. 
  
  The key observation is the following relation between the location of nonzero matrix elements of $\widehat{K}_0$ and $\widehat{K}_{\gamma}$
  \begin{equation}\label{eq:alphaPerm}
    \mbox{if }\quad
    \widehat{K}_0 = (e_{\tau_1}, e_{\tau_2}, \ldots, e_{\tau_{m-1}})^{t}\quad
    \mbox{ then }\quad
    \widehat{K}_{\gamma} = (c^{\gamma}_1 e_{\tau_2}, c^{\gamma}_2 e_{\tau_3}, \ldots, c^{\gamma}_{m-2} e_{\tau_{m-1}},0)^{t}
  \end{equation}
  where $\tau$ is the permutation on $\llbr m-1 \rrbr$ determined by the permutation matrix  $\widehat{K}_0$,  $e_\tau$ is the $\tau$-th coordinate vector in $\CC^{m-1}$ and  $c_i^{\gamma} \in \{0,1\}$ are some constants.
  In other words \eqref{eq:alphaPerm} says that an entry of $\widehat{\Lb}$ may  contain $x_{\gamma}$ only if the entries just below it and on the right side contain $\cstarunit$.
  For the proof, notice that this  rule is immediate for the basic block of $\Lb$ for  monomials \eqref{eq:abasicblock}  and it  remains valid after taking the conjugate transpose or applying any of the rules (R1)-(R3).

  Next, the fact that $\widehat{K}_0$ is a symmetric permutation matrix implies that $\widehat{K}_0^{-1} = \widehat{K}_0$, which means that $\widehat{K}_0^{-1} = (e_{\tau_1}, e_{\tau_2}, \ldots, e_{\tau_{m-1}})$.
  Therefore,
  \begin{equation*}
    \widehat{K}_{\gamma}\widehat{K}_0^{-1}
    =
    (c^{\gamma}_1 e_{\tau_2}, c^{\gamma}_2 e_{\tau_3}, \ldots, c^{\gamma}_{m-2} e_{\tau_{m-1}},0)^{t} (e_{\tau_1}, e_{\tau_2}, \ldots, e_{\tau_{m-1}})
    =
    \sum_{i=1}^{m-2} c_i^{\gamma}E_{i,i+1} 
  \end{equation*}
  is strictly upper-triangular.
  A family of strictly upper-triangular matrices is obviously nilpotent.
  This finishes the proof of the lemma.
\end{proof}

\begin{lem}[Nilpotency of the minimal linearization]\label{lem:anilpotency}
  Let  $\tilde{q} \in \CC \la \xb  \ra$  be a self-adjoint polynomial satisfying $\tilde q(0)=0$.
  Let
  \begin{equation*}
    \Lb =     K_0\otimes \cstarunit - \sum_{\gamma=1}^{\gamma_*} K_{\gamma}\otimes x_{\gamma}
  \end{equation*}
  be a minimal linearization of $\cstarunit - \tilde{q}$.
  Then $\Lb$ is nilpotent. 
\end{lem}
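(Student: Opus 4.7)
The plan is to reduce the minimal case to Lemma~\ref{lem:bignilp} by showing that the procedure that produces a minimal linearization from an arbitrary (in particular, from the standard) linearization preserves the three properties of Definition~\ref{def:nilpotentlin}.

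First, I would dispose of conditions (i) and (ii), since these do not use minimality and hold for any linearization of $\cstarunit - \tilde q$. Indeed, setting $\xb = 0$ in the Schur identity \eqref{eq:Lin11} gives $\cstarunit = \lambda_0 - \ell_0^*\widehat{K}_0^{-1}\ell_0$, and since the linearization requires $\widehat{K}_0$ invertible, the block decomposition of $K_0$ in \eqref{eq:Lexpr1} together with $p(0)=\cstarunit \ne 0$ shows that $K_0$ itself is invertible by the Schur formula. Evaluating the Schur formula for $\la e_1, K_0^{-1}e_1\ra$ then yields $\la e_1, K_0^{-1} e_1\ra = (p(0))^{-1} = 1$.

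Next, for condition (iii), I would start from the standard linearization $\Lb^{\textup{std}}$ of Appendix~\ref{sec:abiglinearization}, which is nilpotent by Lemma~\ref{lem:bignilp}, and trace it through the two-step procedure described after Definition~\ref{def:nilpotentlin} that produces a minimal linearization: (a) the dimension reduction of \cite[Chapter~2.3]{BersReut} (refined in Appendix~\ref{sec:minimallinearization}), followed by (b) the self-adjoint symmetrization of \cite[Lemma~4.1(3)]{HeltMaccVinn}. For step (a), the reduction quotients the ambient space $\CC^{m}$ by subspaces $V$ that are invariant under the family $\{K_\gamma^{\textup{std}} (K_0^{\textup{std}})^{-1}\}$ and compatible with the distinguished vector $e_1$ (in that $\proj$ acts trivially on $V$). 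The elementary fact that restrictions and quotients by invariant subspaces preserve joint nilpotency of a family of operators then implies that the projected family on the reduced space is still nilpotent. The projections $\proj = JK_0^{-1}$ and $\proj' = I - \proj$ transform consistently because $J = e_1 e_1^{t}$ is pinned to the first coordinate, which the reduction leaves fixed.

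The crucial step is (b), the symmetrization, which can mix $\proj$ and $\proj'$ in a non-trivial way. I would show that the symmetrization trick amounts to conjugation by an invertible matrix that commutes with the projection $J \otimes \cstarunit$, so that the new family $\{\proj'_{\textup{new}} K_\gamma K_0^{-1} \proj'_{\textup{new}}\}$ is similar to the old nilpotent family restricted to an appropriate invariant subspace. Since similarity preserves nilpotency, this closes the argument.

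The main obstacle I anticipate is verifying this last compatibility carefully: the minimal linearization produced by the appendix's algorithm need not inherit the simultaneous strict upper-triangularizability proved in \eqref{eq:alphaPerm} for $\Lb^{\textup{std}}$ in any obviously structural way, and one must carefully check that the permutation/shift structure responsible for the nilpotent algebra generated by $\{\proj' K_\gamma K_0^{-1} \proj'\}$ in Lemma~\ref{lem:bignilp} is preserved (not just up to similarity but also after the symmetric block-doubling) by the reduction. If that bookkeeping turns out to be awkward, an alternative would be to argue directly: should the family $\{\proj' K_\gamma K_0^{-1} \proj'\}$ fail to be nilpotent, the generated associative algebra would possess a nonzero idempotent on $\textup{Range}(\proj')$, giving a nontrivial invariant decomposition of $\widehat{\Lb}$ consistent with \eqref{eq:Lin11}, and thereby producing a strictly smaller linearization of $\cstarunit - \tilde q$, contradicting the minimality of $\Lb$.
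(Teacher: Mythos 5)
Your handling of conditions (i) and (ii) is fine, and indeed these hold for \emph{any} linearization of $\cstarunit-\tilde q$ by the Schur-complement argument you give. For (iii), however, both of your routes have genuine gaps, and the paper argues quite differently: it expands $(\cstarunit-\tilde q(\xb))^{-1}$ once as the geometric series coming from the linearization and once as $\cstarunit+\sum_{\ell}\tilde q^{\ell}$ with $\tilde q=\sum_\beta\tilde q_\beta$ split into homogeneous parts, compares degree by degree using the rank-one factorization $\la\proj B_1\ra\la\proj B_2\ra=\la\proj B_1\proj B_2\ra$, and obtains
\[
\tilde q_\ell=\sum_{\alpha_1,\dots,\alpha_\ell=1}^{\gamma_*}\la\proj A_{\alpha_1}\proj'A_{\alpha_2}\proj'\cdots\proj' A_{\alpha_\ell}\ra\, x_{\alpha_1}\cdots x_{\alpha_\ell},
\]
so that all these traces vanish for $\ell>\deg\tilde q$; the span conditions of Proposition~\ref{prop:min}, which \emph{characterize} minimality, then upgrade the vanishing of all such traces to the operator identity $\proj'A_{\gamma_1}\proj'\cdots A_{\gamma_\ell}\proj'=0$.

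The decisive flaw in your proposal is that neither route uses the finite degree of $\tilde q$, which is indispensable. Your fallback argument deduces nilpotency from minimality alone (non-nilpotent algebra $\Rightarrow$ idempotent $\Rightarrow$ smaller realization). If that worked, it would apply verbatim to a minimal self-adjoint realization of $\cstarunit-\tilde q$ for a non-polynomial rational $\tilde q$ with $\tilde q(0)=0$; such realizations exist, satisfy (i)--(ii), and are \emph{not} nilpotent, precisely because the traces above are the coefficients of $\tilde q_\ell$ and do not eventually vanish. Moreover, the step ``idempotent $\Rightarrow$ strictly smaller linearization of the same polynomial'' is unsubstantiated: an idempotent in the algebra generated by $\{\proj'A_\gamma\proj'\}$ does not obviously produce a subspace that contains the vectors required by Proposition~\ref{prop:min} and is invariant in the needed sense. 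Your primary route has two further problems. First, the lemma concerns an \emph{arbitrary} minimal linearization, not only the output of the appendix's algorithm applied to the standard one, so you would need uniqueness of minimal realizations up to a similarity fixing $v_2=e_1$ and $v_1=K_0^{-1}e_1$ (such a similarity does conjugate $\proj$ and $\proj'$ correctly, but you neither state nor prove this). Second, the reduction in Appendix~\ref{sec:minimallinearization} compresses the \emph{non-nilpotent} family $\{A_\gamma\}$ by $P_{\tilde U}$ and then changes basis via two different bases $B_L\neq B_R$ with $\KK_0=B_L^*B_R$; the new $\proj'$ is defined through the new $K_0$, so there is no a priori relation between $\proj'_{\mathrm{new}}\KK_\gamma\KK_0^{-1}\proj'_{\mathrm{new}}$ and any compression of the old nilpotent family $\proj'A_\gamma\proj'$. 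That is exactly the ``bookkeeping'' you defer, and it is the whole content of the claim.
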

\begin{proof}
  By \eqref{eq:aKmin}, \eqref{eq:aKB} and \eqref{eq:111} properties (i) and (ii) from the definition of the nilpotent linearization are satisfied.
   Thus it is left to show that the family of matrices
   \begin{equation*}
        \Big\{ \proj' K_\gamma K_0^{-1} \proj', \; : \;  \gamma \in \llbr \gamma_* \rrbr \Big\}
\end{equation*}
is  nilpotent. 
  Define for brevity $A_{\gamma}:=K_{\gamma}K_0^{-1}$. 
  Assume that  $\|\xb \| \leq \epsD $ for $\epsD>0$ small enough, so that
  \begin{equation}\label{eq:aTBexp0}
    \Big\la K_0^{-1}e_1 \otimes \cstarunit , \Big(I\otimes \cstarunit - \sum_{\gamma=1}^{\gamma_*} A_{\gamma} \otimes x_{\gamma}\Big)^{-1}   e_1 \otimes \cstarunit \Big\ra_{\Acc}
    =
    \frac{1}{\cstarunit - p(\xb)}
  \end{equation}
  and the objects on both sides can be expanded into a convergent geometric series.
  Using the notation 
  \begin{equation*}
    \pproj
    :=
    \proj\otimes \cstarunit
    ,\quad
    \pproj'
    :=
    I\otimes \cstarunit - \pproj
    ,
  \end{equation*}
  and defining the trace operator $ \la \,\cdot \,\ra_{\Acc} : \Amxm \rightarrow \Acc$ by
  \begin{equation*}
    \la \Rb \ra_{\Acc}
    :=
    \sum_{k=1}^{m} \Rb_{kk}
    \quad
    \mbox{ for }
    \Rb \in \Amxm
    ,
  \end{equation*}
  equality \eqref{eq:aTBexp0} can be rewritten as
  \begin{equation*}
   \Big\la \pproj \Big(I\otimes \cstarunit - \sum_{\gamma=1}^{\gamma_*} A_{\gamma} \otimes x_{\gamma}\Big)^{-1}  \Big\ra_{\Acc}
    =
    \frac{1}{\cstarunit - \tilde{q}(\xb)}
    .
  \end{equation*}

  Now using the geometric series expansion for $(I\otimes \cstarunit - \sum_{\gamma=1}^{\gamma_*} A_{\gamma} \otimes x_{\gamma})^{-1} $ we have
  \begin{align}
    \frac{1}{\cstarunit - \tilde{q}(\xb)}
    &=
      \Big\la  \pproj \, \Big(I\otimes \cstarunit + \sum_{k=1}^{\infty} \sum_{(\alpha_1, \ldots , \alpha_k)\in \llbr \gamma_*\rrbr^k} A_{\alpha_1} \cdots A_{\alpha_k}\otimes x_{\alpha_1}\cdots x_{\alpha_k} \Big) \Big\ra_{\Acc} \nonumber
    \\\label{eq:TBexp2}
    &=
      \la \proj \ra \otimes \cstarunit + \sum_{k=1}^{\infty} \sum_{(\alpha_1, \ldots , \alpha_k)\in \llbr \gamma_*\rrbr^k} \la \proj  A_{\alpha_1} \cdots A_{\alpha_k} \ra \otimes x_{\alpha_1}\cdots x_{\alpha_k}
      ,
  \end{align}
  where $\la \, \cdot \, \ra$ denotes the usual trace operator, i.e., $\la R \ra = \Tr R$ for $R\in \CC^{m\times m}$.
  Since the polynomial $\tilde{q}$ has no constant term, we can write it as $\tilde{q}(\xb) = \sum_{\beta=1}^{\infty} \tilde{q}_{\beta}(\xb)$, where 
  $\tilde{q}_\beta$ is a homogeneous polynomial of degree $\beta$. Clearly this summation is finite since $\tilde{q}_\beta\equiv 0$
  whenever $\beta$ is larger than the degree of $\tilde{q}$. In other words, 
  $\tilde{q}_1$ denotes the linear part of $\tilde{q}$,  $\tilde{q}_2$ the quadratic  part, etc. 
  Then $(\cstarunit - \tilde{q}(\xb))^{-1}$ can be expanded as 
  \begin{equation}\label{eq:TBexp3}
    \frac{1}{\cstarunit - \tilde{q}(\xb)}
    =
    \cstarunit +\sum_{\ell=1}^{\infty} \tilde{q}^{\ell}(\xb)
    =
    \cstarunit +\sum_{\ell=1}^{\infty} \sum_{\beta_1,\ldots,\beta_{\ell} =1}^{\mathrm{deg}(\tilde{q})} \tilde{q}_{\beta_1} \cdots \tilde{q}_{\beta_{\ell}}
    .
  \end{equation}
  By construction we know that $\la \proj \ra =1$.
  If we now compare \eqref{eq:TBexp2} and \eqref{eq:TBexp3} recursively degree by degree, then
  from degree one terms we get that
  \begin{equation*}
    \tilde{q}_1
    =
    \sum_{\gamma=1}^{\gamma_*} \la \proj A_{\gamma} \ra x_{\gamma}
    .
  \end{equation*}
  Similarly, from comparing the degree two terms we have
  \begin{equation*}
    \tilde{q}_2+\tilde{q}_1 \tilde{q}_1
    =
    \sum_{\alpha_1, \alpha_2 =1}^{\gamma_*} \la \proj A_{\alpha_1} A_{\alpha_2}  \ra x_{\alpha_1} x_{\alpha_2}
    ,
  \end{equation*}
  so that
  \begin{equation*}
    \tilde{q}_2
    =
    \sum_{\alpha_1, \alpha_2 =1}^{\gamma_*} \la \proj A_{\alpha_1} A_{\alpha_2}  \ra x_{\alpha_1} x_{\alpha_2} - \sum_{\alpha_1=1}^{\gamma_*} \la \proj A_{\alpha_1} \ra x_{\alpha_1} \sum_{\alpha_2=1}^{\gamma_*} \la \proj A_{\alpha_2}  \ra x_{\alpha_2}
    =
    \sum_{\alpha_1,\alpha_2=1}^{\gamma_*} \la \proj A_{\alpha_1} \proj' A_{\alpha_2}  \ra x_{\alpha_1} x_{\alpha_2}
    ,
  \end{equation*}
  where we used the following factorization property, based upon $J=e_1^* e_1$: for any $B_1, B_2 \in \CC^{m\times m}$
  \begin{equation}\label{eq:trCommu} 
    \la \proj B_{1}  \ra \la \proj B_{2}  \ra
    =
    \la e_1, K_0^{-1} \, B_1\, e_1 \ra \la e_1,  K_0^{-1}\, B_2 \, e_1 \ra
    =
    \la \proj B_1 \proj B_2 \ra
    .
  \end{equation}
  Next, from comparing the degree three terms in \eqref{eq:TBexp2} and \eqref{eq:TBexp3} we get
  \begin{equation*}
    \tilde{q}_3 + \tilde{q}_2\tilde{q}_1 + \tilde{q}_1 \tilde{q}_2 + \tilde{q}_1\tilde{q}_1 \tilde{q}_1
    =
    \sum_{\alpha_1,\alpha_2,\alpha_3=1}^{\gamma_*} \la \proj A_{\alpha_1}A_{\alpha_2} A_{\alpha_3} \ra x_{\alpha_1} x_{\alpha_2} x_{\alpha_3} 
  \end{equation*}
  and thus
  \begin{align*}
    \tilde{q}_3
    & =
      \sum_{\alpha_1,\alpha_2,\alpha_3=1}^{\gamma_*} \la \proj A_{\alpha_1}(\proj+\proj')A_{\alpha_2}(\proj+\proj') A_{\alpha_3} \ra x_{\alpha_1} x_{\alpha_2} x_{\alpha_3}
      -\sum_{\alpha_1,\alpha_2,\alpha_3=1}^{\gamma_*} \la \proj A_{\alpha_1}\proj'A_{\alpha_2} \ra \la \proj A_{\alpha_3}  \ra x_{\alpha_1} x_{\alpha_2} x_{\alpha_3}
    \\ & \quad
         - \sum_{\alpha_1,\alpha_2,\alpha_3=1}^{\gamma_*} \la \proj A_{\alpha_1}  \ra \la \proj A_{\alpha_2}\proj'A_{\alpha_3} \ra  x_{\alpha_1} x_{\alpha_2} x_{\alpha_3}
         -\sum_{\alpha_1,\alpha_2,\alpha_3=1}^{\gamma_*} \la \proj A_{\alpha_1} \ra \la \proj A_{\alpha_2} \ra \la \proj A_{\alpha_3}  \ra x_{\alpha_1} x_{\alpha_2} x_{\alpha_3}
    \\
    &=
      \sum_{\alpha_1,\alpha_2,\alpha_3=1}^{\gamma_*} \la \proj A_{\alpha_1}(\proj+\proj')A_{\alpha_2}(\proj+\proj') A_{\alpha_3} \ra x_{\alpha_1} x_{\alpha_2} x_{\alpha_3}
      -\sum_{\alpha_1,\alpha_2,\alpha_3=1}^{\gamma_*} \la \proj A_{\alpha_1}\proj'A_{\alpha_2} \proj A_{\alpha_3}  \ra x_{\alpha_1} x_{\alpha_2} x_{\alpha_3}
    \\ &\quad
         - \sum_{\alpha_1,\alpha_2,\alpha_3=1}^{\gamma_*} \la \proj A_{\alpha_1} \proj  A_{\alpha_2}\proj'A_{\alpha_3} \ra  x_{\alpha_1} x_{\alpha_2} x_{\alpha_3}
         -\sum_{\alpha_1,\alpha_2,\alpha_3=1}^{\gamma_*} \la \proj A_{\alpha_1}\proj  A_{\alpha_2}\proj  A_{\alpha_3}  \ra x_{\alpha_1} x_{\alpha_2} x_{\alpha_3}
    \\ &=
         \sum_{\alpha_1,\alpha_2,\alpha_3=1}^{\gamma_*} \la \proj A_{\alpha_1}\proj'A_{\alpha_2}\proj' A_{\alpha_3} \ra x_{\alpha_1} x_{\alpha_2} x_{\alpha_3}
         ,
  \end{align*}
  where again, similarly as for quadratic terms, we used \eqref{eq:trCommu} to change the order of multiplication and taking trace.

  Now we prove the general formula for $\tilde{q}_\ell$ by induction on the degree $\ell$.
  Suppose that for any $k < \ell$
  \begin{equation}\label{eq:aIndHyp}
    \tilde{q}_k
    =
    \sum_{\alpha_1,\ldots ,\alpha_k=1}^{\gamma_*} \la \proj A_{\alpha_1}\proj'A_{\alpha_2}\proj'\cdots A_{\alpha_{k-1}}\proj' A_{\alpha_k} \ra x_{\alpha_1} \cdots  x_{\alpha_k}
    .
  \end{equation}
  Then from comparing the degree $\ell$ terms in \eqref{eq:TBexp2} and \eqref{eq:TBexp3} we get
  \begin{align*}
    &\sum_{\alpha_1,\ldots ,\alpha_{\ell}=1}^{\gamma_*} \la  \proj A_{\alpha_1}A_{\alpha_2}\cdots A_{\alpha_{\ell-1}} A_{\alpha_{\ell}} \ra x_{\alpha_1} \cdots  x_{\alpha_{\ell}}
    \\ & \quad
         = \sum_{j_1,\ldots,j_{\ell-1}\in \{0,1\}} \sum_{\alpha_1,\ldots ,\alpha_{\ell}=1}^{\gamma_*} \la \pi A_{\alpha_1}\kappa_{j_1} A_{\alpha_2}\kappa_{j_2}\cdots A_{\alpha_{\ell-1}}\kappa_{j_{\ell-1}} A_{\alpha_{\ell}} \ra x_{\alpha_1} \cdots  x_{\alpha_{\ell}}
    \\ & \quad
         =
         \tilde{q}_{\ell}+\tilde{q}_{\ell-1}\tilde{q}_1+\tilde{q}_{\ell-2}\tilde{q}_2+\tilde{q}_{\ell-2} \tilde{q}_1 \tilde{q}_1 +\cdots + \tilde{q}_1 \tilde{q}_1\cdots \tilde{q}_1
  \end{align*}
  where $\kappa_0= \proj'$ and $\kappa_1=\proj$.
  Using the factorization property \eqref{eq:trCommu} and the induction hypothesis \eqref{eq:aIndHyp} one can see that for the terms in the last sum can be written as
  \begin{equation*}
    \tilde{q}_{i_1}\cdots \tilde{q}_{i_t}
    =
    \sum_{\alpha_1,\ldots ,\alpha_{\ell}=1}^{\gamma_*} \la \pi A_{\alpha_1}\kappa_{j_1} A_{\alpha_2}\kappa_{j_2}\cdots A_{\alpha_{\ell-1}}\kappa_{j_{\ell-1}} A_{\alpha_{\ell}} \ra x_{\alpha_1} \cdots  x_{\alpha_{\ell}}
  \end{equation*}
  with
  \begin{equation*}
    j_{s} =\left\{
      \begin{array}{ll}
        1,& s\in \{i_1,i_1+i_2,\ldots, i_1+i_2+\cdots + i_{t-1}\},
        \\
        0, &\mbox{otherwise}
      \end{array}
    \right.  
  \end{equation*}
  and $i_s<m$.
  Therefore, we deduce by induction that for all $\ell \in \NN$
  \begin{equation*}
    \tilde{q}_{\ell}
    =
    \sum_{\alpha_1,\ldots ,\alpha_{\ell}=1}^{\gamma_*} \la \proj A_{\alpha_1}\proj'A_{\alpha_2}\proj'\cdots A_{\alpha_{\ell-1}}\proj' A_{\alpha_{\ell}} \ra x_{\alpha_1} \cdots x_{\alpha_{\ell}}
    .
  \end{equation*}
  In particular, if  $\tilde{q}$ is a polynomial of degree $\ell^*$, then for any $\ell>\ell^*$
  \begin{equation}\label{eq:anilpotencytrace}
    \la \proj A_{\alpha_1}\proj'A_{\alpha_2}\proj'\cdots A_{\alpha_{\ell-1}}\proj' A_{\alpha_{\ell}} \ra
    =
    0
    .
  \end{equation}
  Since $\Lb$ is a minimal linearization and  $K_0, K_{\gamma} \in \CC^{m\times m}$, by Proposition~\ref{prop:min} we have that
  \begin{equation*}
    \mathrm{span} \Big(\bigcup_{\overline{\alpha} \in \Icc} A_{\overline{\alpha}}  e_1 \Big)
      =
      \CC^m
      ,\quad
    \mathrm{span} \Big(\bigcup_{\overline{\alpha}\in \Icc} A^*_{\overline{\alpha}} K_0^{-1} e_1 \Big)
      =
      \CC^m
      ,
    \end{equation*}
    where $\Icc$, $A_{\overline{\alpha}}$ and $A^*_{\overline{\alpha}}$ are defined in \eqref{eq:aIcc} and \eqref{eq:Ralpha}.
      For $\overline{\alpha}= (\alpha_1, \ldots, \alpha_k) \in \Icc$ denote 
  \begin{equation*}
    \tilde{r}_{\overline{\alpha}}
    :=
    A_{\alpha_1} \proj' \cdots \proj' A_{\alpha_k} e_1
    ,\quad
    \tilde{l}_{\overline{\alpha}}
    :=
    A_{\alpha_1}^* {\proj'}^* \cdots {\proj'}^* A_{\alpha_k}^* K_0^{-1}e_1
    .
  \end{equation*}
  Then we can show that in fact
  \begin{equation}\label{eq:aeq121}
    \mathrm{span} \Big(\bigcup_{\overline{\alpha}\in \Icc} \tilde{r}_{\overline{\alpha}} \Big)
      =
      \CC^m
      ,\quad
    \mathrm{span} \Big(\bigcup_{\overline{\alpha}\in \Icc} \tilde{l}_{\overline{\alpha}} \Big)
      =
      \CC^m
      .
  \end{equation}
  Indeed, using the fact that for any $(\alpha_1, \ldots, \alpha_{\ell})\in \llbr \gamma_*\rrbr^{\ell}$ and any $B\in \CC^{m\times m}$
  \begin{equation*}
    A_{\alpha_1}\proj' \cdots \proj' A_{\alpha_{k-1}} \proj B  e_1
    =
    A_{\alpha_1} \proj' \cdots  \proj' A_{\alpha_{k-1}} e_1 \la \proj B \ra
  \end{equation*}
  it can be easily seen that
  \begin{equation*}
    A_{\overline{\alpha}}  e_1 
    =
    A_{\alpha_1} (\proj+\proj') A_{\alpha_{2}}\cdots (\proj+\proj')  A_{\alpha_{\ell}} e_1
    =
    \tilde{r}_{\overline{\alpha}} + u
    ,
  \end{equation*}
  where 
  \begin{equation*}
    u
    \in
    \mathrm{span} \Big(\{\tilde{r}_{\emptyset}\}\cup \bigcup_{k=1}^{\ell-1} \bigcup_{\overline{\beta}\in \llbr \gamma_*\rrbr^{k}}  \tilde{r}_{\overline{\beta}} \Big)
    ,
  \end{equation*}
  This means that for any $\ell\in \NN$
  \begin{equation*}
    \mathrm{span} \Big(\{e_1 \}\cup \bigcup_{k=1}^{\ell} \bigcup_{\overline{\alpha}\in \llbr \gamma_*\rrbr^{k}}  A_{\overline{\alpha}} e_1 \Big)
    \subset
    \mathrm{span} \Big(\{ \tilde{r}_{\emptyset}\}\cup \bigcup_{k=1}^{\ell} \bigcup_{\overline{\alpha}\in \llbr \gamma_*\rrbr^{k}} \tilde{r}_{\overline{\alpha}} \Big)
  \end{equation*}
  which implies the first equality in \eqref{eq:aeq121}.
  The second equality can be shown similarly.

  After all these preparations, we are ready to prove the nilpotency.
  Fix $\ell>\ell^*$ and $(\gamma_1, \ldots, \gamma_{\ell})\in \llbr \gamma_*\rrbr^{\ell}$, where $\ell^*$ denotes the degree of $\tilde{q}$.
  Then for any $\overline{\alpha}, \overline{\beta} \in \Icc$ of lengths $k_{\alpha}$ and $k_{\beta}$ correspondingly, by \eqref{eq:anilpotencytrace} we have
  \begin{align*}
    &\la \tilde{l}_{\overline{\alpha}}, \proj'A_{\gamma_1}\proj'\cdots A_{\gamma_{\ell}} \proj' \tilde{r}_{\overline{\beta}}\ra
    \\
    &\quad    =
      \big\la \proj A_{\alpha_{k_\alpha}} \proj' A_{\alpha_{k_\alpha-1}} \proj' \cdots A_{\alpha_1} \proj' A_{\gamma_1} \proj' \cdots A_{\gamma_{\ell}} \proj' A_{\beta_1} \proj' A_{\beta_2} \proj' \cdots A_{\beta_{k_\beta}} \big\ra
      = 0
      ,
  \end{align*}
  which together with \eqref{eq:aeq121} implies that $\proj'A_{\gamma_1}\proj'\cdots A_{\gamma_{\ell}} \proj'=0$.
  This completes the proof of the lemma.
\end{proof}

\subsection{A priori bound}\label{sec:atrivialbound}
The  a priori bound on the generalized resolvent of any nilpotent linearization was formulated in Lemma~\ref{lem:trivBound}. Now we give its proof using the Schur complement formula.
\begin{proof}[Proof of Lemma~\ref{lem:trivBound}]
  First of all,  with the definition 
  \begin{equation}\label{eq:aTscDef}
    \Tb (z)
    :=
    \Big(I\otimes \cstarunit -z\proj\otimes \cstarunit - \sum_{\gamma=1}^{\gamma_*} A_{\gamma}\otimes  x_{\gamma}\Big)^{-1},
  \end{equation}
  we can rewrite the generalized resolvent as
  \begin{equation}\label{eq:rewrite}
    \Big( (K_0-zJ)\otimes \cstarunit - \sum_{\gamma=1}^{\gamma_*} K_\gamma\otimes  x_{\gamma} \Big)^{-1}
    =
    (K_0^{-1}\otimes \cstarunit) \,  \Tb (z)
    .
  \end{equation}
  Taking the quadratic form of this identity at $e_1$, we have
  \begin{equation*}
    \la  K_0^{-1} e_1 \otimes \cstarunit, \, \Tb (z) \,e_1 \otimes \cstarunit  \ra_{\Acc} =  
    \Big\la e_1 \otimes \cstarunit,\Big((K_0-zJ)\otimes \cstarunit - \sum_{\gamma=1}^{\gamma_*} K_{\gamma}\otimes  x_{\gamma}\Big)^{-1} e_1 \otimes \cstarunit \Big\ra_{\Acc}.
  \end{equation*}
  From the definition of the linearization  and \eqref{eq:schur}, the right hand side  is just the resolvent  $((1-z) \cstarunit - \tilde{q}(\mathbf{ x}))^{-1}$, hence
  \begin{equation}\label{eq:aSchurC110}
    \la  K_0^{-1} e_1 \otimes \cstarunit, \,\Tb (z) \,e_1 \otimes \cstarunit  \ra_{\Acc}
    =
    \frac{1}{\cstarunit - \tilde{q}(\mathbf{ x})-z\cstarunit}
    .
  \end{equation}
  After multiplying this identity by $e_1 \otimes \cstarunit$ on the left and  $(K_0^{-1}e_1)^* \otimes \cstarunit$ on the right we obtain 
  \begin{equation}\label{eq:aSchurC11}
    \pproj  \Tb  (z) \pproj
    =
    \proj \otimes \frac{1}{\cstarunit - \tilde{q}(\mathbf{ x})-z\cstarunit} 
    ,
  \end{equation}
  recalling $\pi= JK_0^{-1}$ and $J= e_1^* e_1$.

  With $\pproj=\proj\otimes \cstarunit$, and $\pproj'= I\otimes \cstarunit - \pproj$, we now define
  \begin{equation}
    \label{eq:21}
    \Sb
    :=
    \pproj' + \sum_{k=1}^{\infty} \Big(\pproj' \Big(\sum_{\gamma=1}^{\gamma_* } A_{\gamma}\otimes x_{\gamma} \Big) \pproj'\Big)^{k}
    =
    \pproj' + \sum_{k=1}^{\infty} \Big(\sum_{\gamma=1}^{\gamma_* } \proj'A_{\gamma}\proj'\otimes x_{\gamma} \Big)^{k}
  \end{equation}
  where the series are convergent, in fact finite, by the joint nilpotency of the family of matrices $\{ \proj' A_{\gamma} \proj', 1\leq \gamma \leq \gamma_*  \}$ (see Lemma~\ref{lem:anilpotency}).
  In particular, there exists a  $k^*\in \NN$ such that 
  \begin{equation}\label{eq:Sbound}
    \|\Sb\|\le C \big(1+ \max_\gamma \| x_\gamma\|^{k^*}_{\Acc}\big) <\infty.
  \end{equation}
  Notice that $\Sb$  is the inverse of  $\pproj' \Tb  (z) \pproj'$ on the range of $\pproj$, i.e. 
  \begin{equation*}
    \pproj' \Big( I\otimes \cstarunit -z\pproj - \sum_{\gamma=1}^{\gamma_* } A_{\gamma}\otimes  x_{\gamma} \Big)\, \pproj' \Sb
    =
    \Sb \, \pproj' \Big(I\otimes \cstarunit -z\pproj - \sum_{\gamma=1}^{\gamma_* } A_{\gamma}\otimes  x_{\gamma} \Big)\, \pproj'
    =
    \pproj'
    .
  \end{equation*}

  By the generalized Schur complement formula for $ \Tb  (z)  =(\pproj+\pproj') \Tb (z) \,(\pproj+\pproj')$  we have
  \begin{align}
    \pproj \, \Tb  (z) \, \pproj'
    &=
      -\Big(\proj\otimes \frac{1}{\cstarunit - \tilde{q}(\mathbf{ x})-z\cstarunit}\Big) \Big(I\otimes \cstarunit -z\pproj - \sum_{\gamma=1}^{\gamma_*} A_{\gamma}\otimes  x_{\gamma}\Big) \, \pproj' \Sb \nonumber
    \\ \label{eq:aSchurC12}
    &
      =
      -\Big(\proj\otimes\frac{1}{\cstarunit - \tilde{q}(\mathbf{ x})-z\cstarunit}\Big) \Big(\sum_{\gamma=1}^{\gamma_*} \proj A_{\gamma}\proj'\otimes  x_{\gamma}\Big) \, \Sb
      ,\\ \label{eq:aSchurC21}
    \pproj' \Tb  (z) \, \pproj
    &
      =
      -\Sb \Big(\sum_{\gamma=1}^{\gamma_*} \proj' A_{\gamma}\proj\otimes  x_{\gamma} \Big)\Big(\proj \otimes \frac{1}{\cstarunit - \tilde{q}(\mathbf{ x})-z\cstarunit} \Big) 
      ,\\\label{eq:aSchurC22}
    \pproj' \Tb (z) \pproj'
    &=
      \Sb+\Sb \Big(\sum_{\gamma=1}^{\gamma_*} \proj' A_{\gamma}\proj\otimes  x_{\gamma}\Big)\Big(\proj \otimes \frac{1}{\cstarunit - \tilde{q}(\mathbf{ x})-z\cstarunit}\Big)  \Big(\sum_{\gamma=1}^{\gamma_*} \proj A_{\gamma}\proj'\otimes  x_{\gamma}\Big) \, \Sb
      .
  \end{align}
  Since $\tilde{q}(\mathbf{ x})$ is self-adjoint, we have a bound on the inverse of $\cstarunit - \tilde{q}(\mathbf{ x})-z\cstarunit$
  \begin{equation}\label{eq:aTBsc}
    \Big\|\frac{1}{\cstarunit - \tilde{q}(\mathbf{ x})-z\cstarunit} \Big\|_{\Acc}
    \leq
    \frac{1}{\eta}
    .
  \end{equation}
  Using now \eqref{eq:aTBsc}, the boundedness of $S$ and formulas \eqref{eq:aSchurC11}-\eqref{eq:aSchurC22} it can be seen that there exists $C>0$ such that $\|T\|\leq C(1+\eta^{-1})$.
  The bound \eqref{eq:trivResBound} now follows from \eqref{eq:rewrite} and \eqref{eq:Sbound}. 
\end{proof}

\begin{rem} \label{rem:33}
  Let $\pi_{\#}\in \CC^{m\times m}$ be  an arbitrary  rank $1$ projection, $\pi_{\#}':= I_m - \pi_{\#}$, $\tilde{\pi}_{\#}:= \pi_{\#}\otimes \cstarunit$, $\tilde{\pi}_{\#}':= \pi_{\#}'\otimes \cstarunit$ and suppose that
  \begin{equation}
    \label{eq:35}
    \tilde{\pi}_{\#} (\Lb - zJ\otimes \cstarunit)^{-1} \tilde{\pi}_{\#}
    =
    \pi_{\#} \otimes \frac{1}{\cstarunit - \tilde{q}(\xb) - z\cstarunit}
    ,
  \end{equation}
  where $(\Lb - zJ\otimes \cstarunit)^{-1}$ is invertible due to \eqref{eq:trivResBound}, and the right-hand side is well-defined since $z\in\CC_{+}$ and $\tilde{q}(\xb)$ is self-adjoint.
  Using the invertibility of $\Lb - zJ\otimes \cstarunit$ , we have trivially that
  \begin{equation}
    \label{eq:36}
    (\tilde{\pi}_{\#} + \tilde{\pi}_{\#}')(\Lb - zJ\otimes \cstarunit) (\tilde{\pi}_{\#} + \tilde{\pi}_{\#}')(\Lb - zJ\otimes \cstarunit)^{-1} (\tilde{\pi}_{\#} + \tilde{\pi}_{\#}')
    =
    I_{m}\otimes \cstarunit
    ,
  \end{equation}
  and, in particular,
  \begin{align}
    \label{eq:37}
    \tilde{\pi}_{\#}'(\Lb - zJ\otimes \cstarunit) (\tilde{\pi}_{\#} + \tilde{\pi}_{\#}')(\Lb - zJ\otimes \cstarunit)^{-1} \tilde{\pi}_{\#} 
    &=
    0_{m\times m}\otimes \cstarunit
    ,
    \\ \label{eq:38}
    \tilde{\pi}_{\#}'(\Lb - zJ\otimes \cstarunit) (\tilde{\pi}_{\#} + \tilde{\pi}_{\#}')(\Lb - zJ\otimes \cstarunit)^{-1} \tilde{\pi}_{\#}'
    &=
      \tilde{\pi}_{\#}'
      .
  \end{align}
  It can be easily checked from  \eqref{eq:35} and  \eqref{eq:37}-\eqref{eq:38} that
  \begin{equation}
    \label{eq:34}
    \tilde{\pi}_{\#}' (\Lb - zJ\otimes \cstarunit)\tilde{\pi}_{\#}'\Big( (\Lb - zJ\otimes \cstarunit)^{-1} - (\Lb - zJ\otimes \cstarunit)^{-1} \big(\pi_{\#}\otimes (\cstarunit - \tilde{q}(\xb) - z\cstarunit)\big) (\Lb - zJ\otimes \cstarunit)^{-1}\Big)\tilde{\pi}_{\#}'
    =
    \tilde{\pi}_{\#}'
    ,
  \end{equation}
  so that $\tilde{\pi}_{\#}' (\Lb - zJ\otimes \cstarunit)\tilde{\pi}_{\#}'$ is invertible  on $\mathrm{Ran}\, \tilde{\pi}_{\#}'$  and, moreover,  its inverse  satisfies the bound
  \begin{equation}
    \label{eq:39}
     \big\| \big(\tilde{\pi}_{\#}' (\Lb - zJ\otimes \cstarunit)\tilde{\pi}_{\#}' \big)^{-1}\big\|
    \leq
    \big\| (\Lb - zJ\otimes \cstarunit)^{-1} \big\| \, \Big( 1 + \| \cstarunit - \tilde{q}(\xb) - z\cstarunit \|_{\Acc} \big\| (\Lb - zJ\otimes \cstarunit)^{-1} \big\| \Big)
    .
  \end{equation}
  In particular, if we take $\pi_{\#} = J$, then by the definition of the linearization and the Schur complement formula
  \begin{equation}
    \label{eq:40}
    (J\otimes \cstarunit) (\Lb - zJ\otimes \cstarunit)^{-1} (J\otimes \cstarunit)
    =
     J\otimes \frac{1}{ \cstarunit - \tilde{q}(\xb) - z\cstarunit }
    ,
  \end{equation}
  and, therefore, $\widehat{\Lb}$, defined as in \eqref{eq:Lexpr1}, is invertible and satisfies the bound \eqref{eq:39}.

  Notice also, that $\widehat{\Lb}$ is independent of $z$, so by taking $C_{2}$ to be equal to the evaluation of the bound on the right-hand side of \eqref{eq:39} at, say, $z=\imunit$, we conclude that
  \begin{equation}
    \label{eq:54}
    \big\| \widehat{\Lb}^{\, -1} \big\| \leq C_{2}
  \end{equation}
   with $C_{2}$ depending only on $\|x_{\gamma}\|_{\Acc}$, $1\leq \gamma\leq \gamma_{*}$ and the linearization $\Lb$.
\end{rem}

\section{Solution to the polynomial Dyson equation}
\label{sec:MDE}

Before starting the proof of Lemma~\ref{lem:MDEexistence} we observe that the linear map $\SuOp$ can be written using only self-adjoint matrices.
Indeed, if we define (compare with \eqref{eq:aNewK})
\begin{equation} \label{eq:symmetrized}
  K_{\alpha_*+\beta}
  :=
  \sqrt{2}\Re L_{\beta}
  ,\quad
  K_{\alpha_*+\beta_*+\beta}
  :=
  -\sqrt{2}\Im L_{\beta}
  ,\quad
  1\leq \beta\leq \beta_*
  ,
\end{equation}
then for any $R\in \CC^{m\times m}$
\begin{equation*}
  \SuOp[R]=\sum_{\alpha=1}^{\alpha_*+2\beta_*} K_\alpha R K_{\alpha}
  .
\end{equation*}
Therefore, the Dyson equation for linearization  \eqref{eq:MDE} can also be written as
\begin{equation}\label{eq:MDEothersym}
  -  M^{\,-1} 
  =
  zJ-K_0 + \sum_{\alpha=1}^{\gamma_*} K_{\alpha} M K_{\alpha}
  .
\end{equation}
where we introduced $\gamma_*:= \alpha_*+2\beta_*$ for brevity.

In the sequel we will use the following notations for comparison relations.
Let $\mathscr{D}\subset \CC$ and let $(\phi_w^{(N)})_{N\in \NN}$ and $(\psi_w^{(N)})_{N\in \NN}$, $w\in \mathscr{D}$, be two sequences of complex-valued functions on $\mathscr{D}$.
We will write $\phi_w^{(N)} \lesssim \psi_w^{(N)}$ (or simply $\phi \lesssim \psi$) if there exists $C>0$ depending only on the polynomial $p$ such that $\phi_w^{(N)} \leq C \psi_w^{(N)}$ uniformly for $w\in \mathscr{D}$ and $N\in \NN$.
If $\phi \lesssim \psi$ and $\psi \lesssim \phi$ then we will write $\phi \sim \psi$.

Also, from now on we will always denote the real and imaginary parts of the spectral parameter $z$ by $E$ and $\eta$ correspondingly, i.e., $z:=E+\imunit \eta$.
\begin{proof}[Proof of Lemma~\ref{lem:MDEexistence}]
  \emph{Existence.}
   Let  $\{\semic_1, \ldots, \semic_{\gamma_*}\}$ be a family of free semicircular variables in a $C^*$--probability space $(\Scc,\tau)$ (see  Appendix~\ref{sec:freeIntro}).
  Define
  \begin{equation*}
    \Lb_{\mathrm{sc}}
    :=
    K_0 \otimes \freeunit - \sum_{\gamma=1}^{\gamma_*} K_{\gamma}\otimes \semic_{\gamma}
    ,
  \end{equation*}
  and for $z\in \CC_{+}$ define a function $  \GSol_{\mathrm{sc}}(z) : \CC_+ \rightarrow \CC^{m\times m}$ by  
  \begin{equation}\label{eq:MscDef}
    \GSol_{\mathrm{sc}}(z)
    :=
    (\id\otimes \tau)\big(\Lb_{\mathrm{sc}}-zJ\otimes \freeunit \big)^{-1}
    .
  \end{equation}
  The subscript in $\Lb_{\mathrm{sc}}$ and $\GSol_{\mathrm{sc}}$ refers to the semicircular elements.

  We will show that the function $\GSol_{\mathrm{sc}}$ is well-defined on $\CC_{+}$ and satisfies \emph{(i)-(iv)} of  Lemma~\ref{lem:MDEexistence}.
  We now introduce some notation that will be used throughout the proof.
  Let $\proj$ and $ \proj'$ denote as before projections on $\CC^{m\times m}$ given by  $\proj = JK_0^{-1}$, $\proj'= I-\proj$, and  let $\pproj$ and $\pproj'$ be projections on $\Cmxm$ defined by
  \begin{equation*}
    \pproj
    :=
    \proj\otimes \freeunit
    ,\quad
    \pproj'
    :=
    I\otimes \freeunit - \pproj
    =
    \proj'\otimes \freeunit
    .
  \end{equation*}
  Define also the matrices $A_{\gamma}:= K_{\gamma} K_0^{-1}$, $\gamma\in\llbr \gamma_* \rrbr$.
  Notice that the  nilpotency of $\Lb$ implies that $\{ \proj' A_\gamma\proj'\}_{\gamma=1}^{\gamma^*}$ is a nilpotent family.

  We first  show that $\GSol_{\mathrm{sc}} (z) $ is well-defined and  properties $(i)$-$(iii)$ hold. To see this, we apply Lemma~\ref{lem:trivBound} with $\Acc = \Scc$ to $(\Lb_{\mathrm{sc}}-zJ\otimes \freeunit )^{-1}$.
  Then from \eqref{eq:trivResBound} (assuming only self-adjoint variables) we obtain that for any $z\in \CC_+$
  \begin{equation}\label{scbound}
    \|(\Lb_{\mathrm{sc}}-zJ\otimes \freeunit)^{-1}\|
    \lesssim
    1+\frac{1}{\eta}
    ,
  \end{equation}
  Moreover, simple computation shows that 
  \begin{equation*}
    \Im \GSol_{\mathrm{sc}}(z)= \eta \, (\id\otimes \tau) \left((\Lb_{\mathrm{sc}}-\overline{z}J\otimes \freeunit)^{-1}  (J\otimes \freeunit) ( \Lb_{\mathrm{sc}} - z J \otimes \freeunit)^{-1}\right)
    ,    
  \end{equation*}
  which yields that $\Im \GSol_{\mathrm{sc}} (z)$ is positive semi-definite.

  In order to prove that $\GSol_{\mathrm{sc}}(z)$ satisfies the \emph{DEL} \eqref{eq:MDE}, consider  its regularizations, i.e.,  a family of matrix-valued functions $\{\GSol_{\mathrm{sc},\,\imunit \epsH_{k}}\}_{k\in \NN}$ given by
  \begin{equation}
    \label{eq:1}
    \GSol_{\mathrm{sc},\,\imunit \epsH_{k}}(z)
    :=
    (\id\otimes \tau)\big(\Lb_{\mathrm{sc}}-(zJ + \imunit \epsH_{k} I)\otimes \freeunit \big)^{-1}
  \end{equation}
  with $z \in \CC_{+}$ and $\epsH_{k} = k^{-1}$.
  For any fixed $k\in\NN$, the imaginary part of $zJ + \imunit \epsH_{k} I$ is positive definite, therefore it follows from \cite[Lemma~5.4]{HaagThor05}  (see also \cite[Proposition~4.1]{Lehn99})  that the function $\GSol_{\mathrm{sc},\,\imunit \epsH_{k}} (z) $ is analytic in $\CC_{+}$ and satisfies the \emph{self-consistent} (or \emph{Matrix Dyson}) equation
  \begin{equation}
    \label{eq:8}
    - \Big[\GSol_{\mathrm{sc},\,\imunit \epsH_{k}}(z)\Big]^{-1} 
    =
    K_{0} - (zJ + \imunit \epsH_{k} I) + \Gamma[\GSol_{\mathrm{sc},\,\imunit \epsH_{k}}(z)]
    ,
  \end{equation}
  which can be viewed as a regularized version of the \emph{DEL} \eqref{eq:MDE}.
   Using the \emph{a priori} bound \eqref{scbound}, for any fixed $\eta = \Im z$ we have
  \begin{equation}
    \label{eq:18}
    |\epsH_{k}|\, \big\|(\Lb_{\mathrm{sc}}-zJ\otimes \freeunit)^{-1}\big\|
    <
    \frac{1}{2} 
  \end{equation}
   if $k\in \NN$ is large enough.
  Therefore, 
  the resolvent identity implies that for $k\in\NN$ large enough  (depending on $\eta = \Im z$) 
  \begin{equation}
    \label{eq:30}
    (\Lb_{\mathrm{sc}}-(zJ+\imunit \epsH_{k})\otimes \freeunit)^{-1}
    =
    \Big(I\otimes \freeunit-\imunit \epsH_{k} \,(\Lb_{\mathrm{sc}}-zJ\otimes \freeunit)^{-1}\Big)^{-1}(\Lb_{\mathrm{sc}}-zJ\otimes \freeunit)^{-1}
    ,
  \end{equation}
  so that by definition \eqref{eq:1} the function $\GSol_{\mathrm{sc},\,\imunit \epsH_{k}}(z)$ satisfies a  $\epsH_k$-independent bound
  \begin{equation}
    \label{eq:13}
    \| \GSol_{\mathrm{sc},\,\imunit \epsH_{k}}(z) \|
    \lesssim
    1 + \frac{1}{\eta}
    ,\quad
     \forall \, k \geq k_{0}(\eta) 
    .
  \end{equation}
  On the other hand, the trivial resolvent bound implies that
  \begin{equation}
    \label{eq:32}
    \| \GSol_{\mathrm{sc},\,\imunit \epsH_{k}}(z) \|
    \leq
    \frac{1}{\epsH_{k}}
    .
  \end{equation}
  Therefore, it is  easy to see that the family of function $\{\GSol_{\mathrm{sc},\,\imunit \epsH_{k}} : \CC_{+} \rightarrow  \CC^{m\times m}  \}_{k\in\NN}$ is locally uniformly bounded, and thus, by the Montel's theorem, is normal.
  From the a priori bounds \eqref{scbound} and \eqref{eq:13} and the resolvent identity we have that for $k\in\NN$ large enough
  \begin{equation}
    \label{eq:33}
    \|\GSol_{\mathrm{sc},\,\imunit \epsH_{k}}(z) - \GSol_{\mathrm{sc}}(z) \|
    \lesssim
    \frac{1}{k}\bigg(1 + \frac{1}{\eta}\bigg)^{2}
    ,
  \end{equation}
  which yields the pointwise limit $\lim_{k\rightarrow \infty} \GSol_{\mathrm{sc},\,\imunit \epsH_{k}}(z) = \GSol_{\mathrm{sc}}(z)$ for $z\in \CC_{+}$.
  The normality of $\{\GSol_{\mathrm{sc},\,\imunit \epsH_{k}}\}_{k\in \NN}$ implies that $\GSol_{\mathrm{sc}}(z)$ is analytic on $z\in \CC_{+}$.
  By rewriting \eqref{eq:8} as
  \begin{equation}
    \label{eq:16}
    (K_{0} - (zJ + \imunit \epsH_{k} I))\GSol_{\mathrm{sc},\,\imunit \epsH_{k}}(z) + \Gamma[\GSol_{\mathrm{sc},\,\imunit \epsH_{k}}(z)] \GSol_{\mathrm{sc},\,\imunit \epsH_{k}}(z) + 1
    =
    0
  \end{equation}
  and taking the limit $k\rightarrow \infty$, we obtain that $\GSol_{\mathrm{sc}}(z)$ satisfies the \emph{DEL} \eqref{eq:MDE}.  
  
  \emph{Uniqueness.} 
  Suppose that $M_1, M_2 \, : \CC_+ \rightarrow  \CC^{m\times m} $ are two analytic solutions of \eqref{eq:MDEothersym} satisfying $\Im M_1(z) \geq 0$ and $\Im M_2(z) \geq 0$.
  It is easy to see that both $M_{1,2}(z)$
   are solutions of \eqref{eq:MDEothersym} on $\CC_{+}$ if and only if for all $z\in \CC_{+}$ functions $M^{\mathrm{ns}}_{1,2}(z):= K_0 M_{1,2} (z)$ satisfy
    \begin{equation}\label{eq:MDEotherR1}
      M^{\mathrm{ns}}_{1,2}(z)
      =
      \frac{1}{1-z}\proj + \proj' + M^{\mathrm{ns}}_{1,2}(z) \sum_{\gamma=1}^{\gamma_*} A_{\gamma} M^{\mathrm{ns}}_{1,2}(z) A_{\gamma} \Big(\frac{1}{1-z}\proj + \proj'\Big)
      .
    \end{equation}      
  If we recursively replace $M^{\mathrm{ns}}_{1,2}(z)$ in the RHS by the expression given in the RHS of \eqref{eq:MDEotherR1}, we obtain a series which is convergent for large $z$ due to nilpotency of the linearization.
  Indeed, if we assume for simplicity that $\gamma_*=1$, then  $M^{\mathrm{ns}}_{1,2}(z)$ can be rewritten as
  \begin{equation}\label{eq:Mexpan}
    M^{\mathrm{ns}}_{1,2}(z)
    =
    \sum_{\ell=0}^{\infty} C_{\ell} \Big(\Big(\frac{1}{1-z}\proj + \proj'\Big)A_{1}\Big)^{2\ell} \Big(\frac{1}{1-z}\proj + \proj'\Big)
  \end{equation}
  where $C_{\ell}$  denotes the $\ell$th Catalan number.
  Since $C_{\ell} = \frac{1}{\ell +1}\binom{2\ell}{\ell} \le 4^{\ell}$, we conclude that the RHS of \eqref{eq:Mexpan} contains $\OO{16^{\ell}}$ products of type
  \begin{equation}\label{eq:MexpanProd}
    \sigma_{1} A_1 \cdots \sigma_{2\ell} A_1 \sigma_{2\ell +1}
  \end{equation}
  with $\sigma_{i}\in \{\frac{1}{1-z} \proj, \proj'\}$.
  Now we collect all terms of type \eqref{eq:MexpanProd} that behave asymptotically like $(1-z)^{-i}$ for some $i\in \NN$ as $|z|\rightarrow \infty$.
  By the nilpotency of $\{ \proj' A_{\gamma} \proj', 1\leq \gamma\leq \gamma_*\}$ there exists $k\in \NN$ such that $\proj' A_{\gamma_1} \proj'\cdots \proj' A_{\gamma_k} \proj'=0$.
  Therefore, the maximum $\ell$ for which there exists a product  \eqref{eq:MexpanProd} that behaves asymptotically as $(1-z)^{-i}$ is less than $k(i+1)/2$.
  This implies that if $|1-z|>2 (4\| A_1 \|)^{k}$, then the RHS in \eqref{eq:Mexpan} converges and thus $M^{\mathrm{ns}}_{1}(z)=    M^{\mathrm{ns}}_{2}(z)$ on $\CC_+$ by analiticity.
  If $\gamma_* > 1$, then it can be shown similarly that on the set $|z|> 2(4 \gamma_* \max_{1\leq \gamma \leq \gamma_*}\|A_{\gamma}\|)^{k}$ functions $M^{\mathrm{ns}}_{1}(z)$ and $M^{\mathrm{ns}}_{2}(z)$ coincide, which again implies $M^{\mathrm{ns}}_{1}(z)= M^{\mathrm{ns}}_{2}(z)$ on $\CC_+$.
 This concludes the proof that  $\GSol_{\mathrm{sc}}(z)$ defined in  \eqref{eq:MscDef} is the unique solution to the DEL \eqref{eq:MDE}, i.e. $M(z)=\GSol_{\mathrm{sc}}(z)$. 
\end{proof}

\begin{proof}[Proof of Lemma~\ref{lem:StiltRep}]
  $\GSol(z)$ is a matrix-valued Herglotz function, therefore, from \cite[(1.1)-(1.3)]{GeszTsek00} it has the following representation
  \begin{equation}\label{eq:StieltRep}
    \GSol(z)
    =
    B_1 z + B_0 + \int_{\RR} \Big( \frac{1}{x-z} - \frac{x}{1+x^2}\Big)  V(dx) 
    ,
  \end{equation}
  where $B_1, B_0 \in \CC^{m\times m}$, $B_1=\lim_{\eta\uparrow \infty}(\frac{1}{\imunit \eta} \GSol(\imunit \eta))$ and $ V(dx) $ is a matrix-valued measure satisfying
  \begin{equation*}
    \int_{\RR} \frac{ \la \cb ,  V(dx)  \,\cb \ra}{1+x^2}
    <
    \infty
    \quad
    \mbox{and}\quad
    \int_{I}  V(dx) 
    \geq 0
  \end{equation*}
  for any $\cb \in \CC^{m}$ and Borel $I \subset \RR$.
  
  By definition \eqref{eq:MscDef} and the conclusion of the existence part of the proof of  Lemma~\ref{lem:MDEexistence}, we know that $\GSol(z)$ can be written as
\begin{equation} \label{eq:41}
  \GSol(z)=\GSol_{\mathrm{sc}}(z)
  =
  (\id\otimes \tau)\big( \Lb_{\mathrm{sc}}-zJ\otimes \freeunit\big)^{-1}
  .
\end{equation}
  Similarly as in \eqref{eq:Lexpr1}, express $\Lb_{\mathrm{sc}}$ in a 2 by 2 block form
  \begin{equation}
    \label{eq:42}
    \Lb_{\mathrm{sc}}
    =
    \begin{pmatrix}
      \lambda_{\mathrm{sc}} & \ell_{\mathrm{sc}}^*
      \\
      \ell_{\mathrm{sc}} & \widehat{\Lb}_{\mathrm{sc}}
    \end{pmatrix}
  \end{equation}
  by separating its first row and column,  so that $\freeunit - \tilde{q}(\mathbf{s}) = \lambda_{\mathrm{sc}} - \ell_{\mathrm{sc}}^* \widehat{\Lb}_{\mathrm{sc}}^{\,-1}\ell_{\mathrm{sc}}$.
  Now apply to $(\Lb_{\mathrm{sc}} - zJ\otimes \freeunit)^{-1}$ the Schur complement formula with respect to its $(1,1)$ component in the form
  \begin{align}
     \label{eq:43}
        \begin{pmatrix}
      \lambda_{\mathrm{sc}} - z \freeunit & \ell_{\mathrm{sc}}^*
      \\
      \ell_{\mathrm{sc}} & \widehat{\Lb}_{\mathrm{sc}}
    \end{pmatrix}^{-1}
                               &=
    \left(
      \begin{array}{cc}
        0 & 0
        \\
        0 & \widehat{\Lb}_{\mathrm{sc}}^{\,-1}
      \end{array}
    \right)
    +
    \left(
      \begin{array}{cc}
        \freeunit & 0
        \\
        - \widehat{\Lb}_{\mathrm{sc}}^{\,-1} \ell_{\mathrm{sc}} & 0
      \end{array}
    \right)
    \left(
      \begin{array}{cc}
        \frac{1}{\freeunit - \tilde{q}(\mathbf{s}) - z \freeunit}& 0
        \\
        0 & 0                                                                
      \end{array}
    \right)
        \left(
      \begin{array}{cc}
        \freeunit & - \ell_{\mathrm{sc}}^{*} \widehat{\Lb}_{\mathrm{sc}}^{\,-1} 
        \\
         0 & 0
      \end{array}
             \right)
    \\ \label{eq:44}
                                          &=
    \left(
      \begin{array}{cc}
        0 & 0
        \\
        0 & \widehat{\Lb}_{\mathrm{sc}}^{\,-1}
      \end{array}
    \right)
    -
    \left(
      \begin{array}{cc}
        \freeunit & 0
        \\
        -\widehat{\Lb}_{\mathrm{sc}}^{\,-1} \ell_{\mathrm{sc}} & 0
      \end{array}
    \right)
    \left(
      \begin{array}{cc}
        \sum\limits_{i=0}^{\infty}\frac{(\freeunit - \tilde{q}(\mathbf{s}))^{i}}{z^{i+1}}& 0
        \\
        0 & 0                                                                
      \end{array}
    \right)
        \left(
      \begin{array}{cc}
        \freeunit & -\ell_{\mathrm{sc}}^{*} \widehat{\Lb}_{\mathrm{sc}}^{\,-1} 
        \\
         0 & 0
      \end{array}
             \right)
             ,
  \end{align}
  where $\widehat{\Lb}_{\mathrm{sc}}$ is invertible and satisfies the bound $\| \widehat{\Lb}_{\mathrm{sc}}^{\, -1}\| \leq C_{2}$, for $C_{2}>0$ depending only on the linearization $\Lb$ (see  \eqref{eq:54} in  Remark~\ref{rem:33}).
  The series in \eqref{eq:44} is clearly convergent for $|z|> \| \freeunit - \tilde{q}(\mathbf{ s})\|_{\Scc}$.
  The expansion \eqref{eq:44} together with \eqref{eq:41} immediately imply that $\| \GSol(\imunit \eta)\| \leq C$ for some $C>0$ and all $\eta>1$ large enough, from which it follows that $B_1  = 0$ in \eqref{eq:StieltRep}.
  
  By Definition~\ref{def:saLin} of self-adjoint linearizations, the submatrix $\widehat{\Lb}_{\mathrm{sc}}$ is self-adjoint, i.e., $\Im \widehat{\Lb}_{\mathrm{sc}} = 0$.
  Therefore, \eqref{eq:44} implies that
  \begin{equation}
    \label{eq:45}
    \Im\, (\Lb_{\mathrm{sc}} - zJ\otimes \freeunit)^{-1} 
    =
    -
    \left(
      \begin{array}{cc}
        \freeunit & 0
        \\
        -\widehat{\Lb}_{\mathrm{sc}}^{\,-1} \ell_{\mathrm{sc}} & 0
      \end{array}
    \right)
    \left(
      \begin{array}{cc}
        \Im  \sum\limits_{i=0}^{\infty}\frac{(\freeunit - \tilde{q}(\mathbf{s}))^{i}}{z^{i+1}}& 0
        \\
        0 & 0                                                                
      \end{array}
    \right)
        \left(
      \begin{array}{cc}
        \freeunit & -\ell_{\mathrm{sc}}^{*} \widehat{\Lb}_{\mathrm{sc}}^{\,-1} 
        \\
         0 & 0
      \end{array}
    \right)
    .
  \end{equation}
  From the properties of scalar-valued Herglotz functions (formula S1.1.9 in \cite{KacKrei74}), polarization (as in the proof of Lemma~5.3 in \cite{GeszTsek00}) and \eqref{eq:41} we obtain that
  \begin{align}
    \nonumber
    \int_{\RR}  V(dx) 
    &=
      \lim_{\eta\uparrow \infty} \eta \Im \GSol(\imunit \eta)
    \\
    &= \label{eq:46}
      (\id \otimes \tau)
          \left(
      \begin{array}{cc}
        \freeunit & 0
        \\
        -\widehat{\Lb}_{\mathrm{sc}}^{\,-1} \ell_{\mathrm{sc}} & 0
      \end{array}
    \right)
    \left(
      \begin{array}{cc}
        1 & 0
        \\
        0 & 0                                                                
      \end{array}
    \right)
        \left(
      \begin{array}{cc}
        \freeunit & -\ell_{\mathrm{sc}}^{*} \widehat{\Lb}_{\mathrm{sc}}^{\,-1} 
        \\
         0 & 0
      \end{array}
             \right)
             ,
  \end{align}
  where the second line follows from the boundedness of $\widehat{\Lb}_{\mathrm{sc}}^{\, -1}$ and the expansion in \eqref{eq:45}, for which only the term corresponding to $i=0$ does not vanish in the limit. 
  In particular, \eqref{eq:46} implies that $\| \int_{\RR} V(dx) \| < \infty$ and  $V_{11}(\RR) = 1$, i.e., that $V_{11}(dx)$ is a probability measure.

  Since $V(dx)$ is a positive semidefinite  matrix-valued   measure,  the inequality  $\| \int_{\RR} f(x) V(dx)\| \leq \max_{x\in \RR} |f(x)| \,\| \int_{\RR}V(dx)\|$ holds for all measurable functions $f:\RR \rightarrow \CC$.
  Therefore, the boundedness of $\| \int_{\RR} V(dx) \|$ implies that both integrands in \eqref{eq:StieltRep}  separately  have finite integrals. 
  By taking the limit $z\rightarrow \infty$ in \eqref{eq:StieltRep} with $B_1=0$, we obtain that
  \begin{equation*}
    \lim_{z\rightarrow \infty} \GSol(z)
    =
    B_0 - \int_{\RR} \frac{x\,  V(dx) }{1+x^2}
    .
  \end{equation*}
  On the other hand, taking the same limit in \eqref{eq:44} leads to
  \begin{equation}
    \label{eq:47}
    \lim_{z\rightarrow \infty} \GSol(z)
    =
    (\id \otimes \tau)
    \left(
      \begin{array}{cc}
        0 & 0
        \\
        0 & \widehat{\Lb}_{\mathrm{sc}}^{\, -1}
      \end{array}
      \right)
    .
  \end{equation}
  Combining the two limits above and putting $M^{\infty}:= (\id \otimes \tau)
    \left(
      \begin{array}{cc}
        0 & 0
        \\
        0 & \widehat{\Lb}_{\mathrm{sc}}^{\, -1}
      \end{array}
      \right)$ yields the representation \eqref{eq:matSt}.

      Writing now $\Im \GSol(z)$ as
  \begin{align} \nonumber
  \Im \GSol(z)
  &=
    (\id\otimes \tau) \big( \Im \big( \Lb_{\mathrm{sc}}-zJ\otimes \freeunit\big)^{-1} \big)
    \\
  &=\label{eq:25}
  (\id\otimes \tau) \Big( \eta \big( \Lb_{\mathrm{sc}}-zJ\otimes \freeunit\big)^{-1} \, (J\otimes \freeunit) \, \big( \Lb_{\mathrm{sc}}-\overline{z}J\otimes \freeunit\big)^{-1} \Big)
\end{align}
and using consequently \eqref{eq:41} and \eqref{eq:44} we get that 
\begin{equation}
  \label{eq:19}
  \| \Im \GSol(E+\imunit \eta)\|
  \lesssim
  \frac{\eta }{\big(E-\|\freeunit - \tilde{q}(\mathbf{\semic})\|_{\Scc} \big)^{2}}
  ,
\end{equation}
and thus
\begin{equation}
  \label{eq:20}
  \lim_{\eta\rightarrow 0_{+}} \Im \GSol(E+\imunit \eta) = 0
\end{equation}
if $|E| > \|\freeunit - \tilde{q}(\mathbf{\semic})\|_{\Scc}$.
Therefore, by dominated convergence,
for any $a<b<-\| \freeunit - \tilde{q}(\mathbf{\semic})\|_{\Scc}$ or $\| \freeunit - \tilde{q}(\mathbf{\semic})\|_{\Scc} < a < b$
\begin{equation}
  \label{eq:24}
  \limsup_{\eta\downarrow 0}\int_{a}^{b} \Im \GSol(E + \imunit \eta) dE
  =
  0
  .
\end{equation}
By the Stieltjes inversion formula (see, e.g., \cite[formula (1.4)]{GeszTsek00}), we conclude that $\mathrm{supp}\, V \subset [-\| \freeunit - \tilde{q}(\mathbf{\semic})\|_{\Scc},\| \freeunit - \tilde{q}(\mathbf{\semic})\|_{\Scc}]$.
This finishes the proof of part $(i)$ of Lemma~\ref{lem:StiltRep}.

Part $(ii)$ of Lemma~\ref{lem:StiltRep} follows from, e.g., \cite[Lemma~5.4]{GeszTsek00} and the Stieltjes transform representation \eqref{eq:matSt} of $\GSol(z)$.

In order to prove part $(iii)$, we compute the diagonal entries of $\Im \GSol(z)$ using  \eqref{eq:41}  and the equality \eqref{eq:43}.
More precisely, from \eqref{eq:43} we have that
\begin{equation}
  \label{eq:48}
      \Im\, (\Lb_{\mathrm{sc}} - zJ\otimes \freeunit)^{-1} 
    =
    \left(
      \begin{array}{cc}
        \freeunit & 0
        \\
        -\widehat{\Lb}_{\mathrm{sc}}^{\,-1} \ell_{\mathrm{sc}} & 0
      \end{array}
    \right)
    \left(
      \begin{array}{cc}
        \Im  \frac{1}{\freeunit - \tilde{q}(\mathbf{s}) -z \freeunit}& 0
        \\
        0 & 0                                                                
      \end{array}
    \right)
        \left(
      \begin{array}{cc}
        \freeunit & -\ell_{\mathrm{sc}}^{*} \widehat{\Lb}_{\mathrm{sc}}^{\,-1} 
        \\
         0 & 0
      \end{array}
    \right)
    .
  \end{equation}
  The polynomial $\tilde{q}$ is self-adjoint, thus
  \begin{equation}
    \label{eq:49}
    \Im  \frac{1}{\freeunit - \tilde{q}(\mathbf{s}) -z \freeunit}
    =
    \eta \frac{1}{(\freeunit - \tilde{q}(\mathbf{s}) -z \freeunit)(\freeunit - \tilde{q}(\mathbf{s}) - \overline{z} \freeunit)}
    .
  \end{equation}
  Applying now \eqref{eq:48} and \eqref{eq:49} to the free probability representation of $\GSol(z)$ from \eqref{eq:MscDef}, we get that
  \begin{equation}
    \label{eq:50}
    \la e_1, \Im \GSol(z)\, e_1 \ra
    =
    \eta \,\tau \,\Big( \frac{1}{(\freeunit - \tilde{q}(\mathbf{s}) -z \freeunit)(\freeunit - \tilde{q}(\mathbf{s}) - \overline{z} \freeunit)} \Big)
  \end{equation}
  and
  \begin{equation}
    \label{eq:51}
    \la e_{i}, \Im \GSol(z)\, e_{i} \ra
    =
    \eta \,\tau \,\bigg( \Big(\widehat{\Lb}_{\mathrm{sc}}^{\,-1}\ell_{\mathrm{sc}}\Big)_{i-1}  \frac{1}{(\freeunit - \tilde{q}(\mathbf{s}) -z \freeunit)(\freeunit - \tilde{q}(\mathbf{s}) - \overline{z} \freeunit)} \Big( \widehat{\Lb}_{\mathrm{sc}}^{\,-1} \ell_{\mathrm{sc}}\Big)_{i-1}^{*} \bigg)
  \end{equation}
  for $2\leq i \leq m$.
 In particular,  \eqref{eq:50}  shows that the imaginary part of the upper left entry of $\GSol(z)$ is independent of the linearization.

Now, applying  (G.7) from \cite{AndeGuioZeitBook} to \eqref{eq:51}, using submultiplicativity of the norm (see  \ref{sec:freeIntro}) and $\|\semic_{\alpha}\|_{\Scc}=2$, we get that for all $2\leq i\leq m$,
\begin{equation}
  \label{eq:52}
  \la e_{i}, \Im \GSol(z)\, e_{i} \ra
  \leq
  \Big\| \Big(\widehat{\Lb}_{\mathrm{sc}}^{\,-1}\ell_{\mathrm{sc}}\Big)_{i-1} \Big\|_{\Scc}^{2} \, \eta \,\tau \,\bigg(   \frac{1}{(\freeunit - \tilde{q}(\mathbf{s}) -z \freeunit)(\freeunit - \tilde{q}(\mathbf{s}) - \overline{z} \freeunit)}  \bigg)
  .
\end{equation}
Combining \eqref{eq:50}, \eqref{eq:51} and \eqref{eq:52} we end up with the following bound for $\Tr \Im \GSol(z)$
\begin{equation}
  \label{eq:53}
  \Tr \Im \GSol(z)
  \leq
  \la e_{1}, \Im \GSol(z)\, e_{1} \ra \,\bigg( 1+ \sum_{i=2}^{m} \Big\| \Big(\widehat{\Lb}_{\mathrm{sc}}^{\,-1}\ell_{\mathrm{sc}}\Big)_{i-1} \Big\|_{\Scc}^{2} \bigg)
  .
\end{equation}
  Therefore, if $\lim_{\eta \rightarrow 0}\la e_1, \Im \GSol(E+\imunit \eta) e_1 \ra = 0$  for some $E\in \RR$,  then $\lim_{\eta\rightarrow 0} \Tr\Im \GSol(E+\imunit \eta) = 0$, and similarly if $\lim_{\eta\rightarrow 0} \Tr\Im \GSol(E+\imunit \eta) = \infty$ then $\lim_{\eta \rightarrow 0}\la e_1, \Im \GSol(E+\imunit \eta) e_1 \ra = \infty$.
  We conclude that $\mathrm{supp} ( V_{11}) = \mathrm{supp} (\Tr V)$.
\end{proof}

\begin{lem}[Stability of the solution of the DEL] For fixed $\kappa>0$ and under assumptions (\textbf{M1}) and (\textbf{M2}), there exists $\epsilon>0$ such that uniformly for all $z \in \CC_+$ with $\Re z \in B_\kappa$ the following holds true:
    \begin{itemize}
    \setlength\itemsep{0em}
  \item[(i)] For any $R \in \CC^{m\times m}$ , $\|R\| < \epsilon$, the matrix equation
      \begin{equation*}
    - \GSol^{\, - 1} 
    =
    zJ-K_0 + \SuOp [ \GSol] + R
  \end{equation*}
  has a solution, which we denote by $\GSol(R)$;
  \item[(ii)] For any $R_1, R_2 \in \CC^{m\times m}$, $\|R_1\| < \epsilon$, $\|R_2\| < \epsilon$, we have
  \begin{equation}\label{eq:SolStab}
    \| M (R_1)-M (R_2)\|
    \leq
    C \|R_1 - R_2\|
    .
  \end{equation}
  \end{itemize}  
\end{lem}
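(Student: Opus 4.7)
The plan is to prove both parts simultaneously via a contraction mapping argument in a neighborhood of the unperturbed solution $M := \GSol$ constructed in Lemma~\ref{lem:MDEexistence}. Note first that $M$ is automatically invertible: the DEL itself reads $M\cdot(-(zJ-K_0+\SuOp[M])) = I$, so $-(zJ-K_0+\SuOp[M])$ serves as both a right and left inverse of $M$. Writing $M(R) = M + \Delta$ and subtracting the unperturbed DEL from the perturbed one, the Neumann expansion of $(M+\Delta)^{-1}$ (valid as soon as $\|\Delta\|\,\|M^{-1}\|<1$) yields
\begin{equation*}
M^{-1}\Delta M^{-1} - \SuOp[\Delta] = R - \mathcal{N}(\Delta),
\end{equation*}
where $\mathcal{N}(\Delta) = \sum_{k\geq 2}(-M^{-1}\Delta)^{k} M^{-1} = O(\|\Delta\|^{2})$. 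Multiplying by $M$ on both sides makes the stability operator appear on the left:
\begin{equation*}
\GStOp[\Delta] = MRM - M\,\mathcal{N}(\Delta)\,M.
\end{equation*}

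Assuming $\GStOp(z)$ is invertible at the given spectral parameter --- which is implicit in the qualitative statement of the lemma and, in the applications we care about, follows from assumption (\textbf{M2}) for $\Re z\in B_{\epsB}$ --- I invert to obtain the fixed-point equation $\Delta = T_{R}(\Delta) := \GStOp^{-1}[MRM] - \GStOp^{-1}[M\mathcal{N}(\Delta)M]$. Straightforward estimates give $\|\mathcal{N}(\Delta)\|\lesssim \|M^{-1}\|^{3}\|\Delta\|^{2}$ together with the analogous local Lipschitz bound for $\mathcal{N}$. Consequently, for $\|R\|<\epsilon$ with $\epsilon$ sufficiently small depending on $\|\GStOp^{-1}\|$, $\|M\|$ and $\|M^{-1}\|$, the map $T_{R}$ sends the ball of radius $2\|\GStOp^{-1}\|\|M\|^{2}\|R\|$ into itself and is a strict contraction there. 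Banach's fixed point theorem then supplies a unique $\Delta(R)$ in this ball, proving (i) with $M(R):=M+\Delta(R)$.

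For part (ii), subtracting the fixed-point identities for $R_{1}$ and $R_{2}$ yields
\begin{equation*}
\Delta(R_{1})-\Delta(R_{2}) = \GStOp^{-1}[M(R_{1}-R_{2})M] - \GStOp^{-1}\bigl[M\bigl(\mathcal{N}(\Delta(R_{1}))-\mathcal{N}(\Delta(R_{2}))\bigr)M\bigr],
\end{equation*}
and the quadratic Lipschitz estimate $\|\mathcal{N}(\Delta_{1})-\mathcal{N}(\Delta_{2})\|\lesssim(\|\Delta_{1}\|+\|\Delta_{2}\|)\|\Delta_{1}-\Delta_{2}\|$ lets the last term be absorbed into the left-hand side for small $\|\Delta_{i}\|$, giving $\|M(R_{1})-M(R_{2})\|\leq C\|R_{1}-R_{2}\|$ with $C$ proportional to $\|\GStOp^{-1}\|\|M\|^{2}$. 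The only genuine obstacle in the proof is the invertibility of $\GStOp$; once that is granted, everything else is routine finite-dimensional perturbation theory.
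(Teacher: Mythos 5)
Your argument is correct and is essentially the same as the paper's, which simply cites the standard perturbation/contraction argument (Corollary~3.8 of \cite{AltErdoKrugNemi_Kronecker}) based on (\textbf{M1}) and (\textbf{M2}); you have carried out that argument explicitly, correctly identifying that the DEL itself furnishes $M^{-1}$ and hence the needed bound $\|M^{-1}\|\lesssim 1+|z|$. Your observation that invertibility of $\GStOp$ is the only nontrivial input, supplied by (\textbf{M2}) in the bulk, matches the paper's intent.
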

\begin{proof}
  This follows easily from (\textbf{M1}) and (\textbf{M2}) (see e.g. proof of the Corollary 3.8 in \cite{AltErdoKrugNemi_Kronecker})
\end{proof}

\section{Proof of the local law}
\label{sec:locallaw}

In order to establish the local law for the polynomials we will rely heavily on the linearization technique described in the previous sections. 
More precisely, given a self-adjoint polynomial $p=p(\Xb, \Yb, \Yb^{*})$ in the variables $\Xb$, $\Yb$ and $\Yb^{*}$, we consider one of its nilpotent linearizations $\Lb$ as defined in Section~\ref{sec:linDyson}.
Its generalized resolvent  will give the necessary information on the resolvent of $p$ via \eqref{eq:schur}.
So from now on our main object of interest will be the linearized random matrix $\Hb$ defined by
\begin{equation}\label{eq:Hdef}  
  \Hb
  =
  K_0\otimes I_N - \sum_{\alpha=1}^{\alpha_*} K_\alpha\otimes X_\alpha - \sum_{\beta=1}^{\beta_*}\big( L_{\beta}\otimes Y_\beta+L^*_{\beta}\otimes  Y_\beta^{*} \big)
  .
\end{equation} 
This matrix plays the role of $\Lb$ in Section~\ref{sec:linDyson}, but we use a different letter to stress that we  are in the random matrix setup. 
We denote by $I_N$ the unit element of $\mathscr{A}=\CC^{N\times N}$.
We remark that random matrices of the form \eqref{eq:Hdef}, in particular their \emph{resolvents}, have been extensively studied in \cite{AltErdoKrugNemi_Kronecker}
where they were called   {\it Kronecker matrices}.

We will denote the \emph{generalized resolvent} of $\Hb$ by $\GRes(z):=(\Hb-zJ\otimes I_N)^{-1}$.
By $\GRes_{kl} \in \CC^{N\times N}$ and $\Gm_{ij} \in \CC^{m\times m}$ we will denote the coefficient of $\GRes$ in the standard bases of $\CC^{m \times m}$ and $\CC^{N\times N}$ correspondingly, i.e.,
\begin{equation*}
  \GRes
  =
  \sum_{k,l=1}^{m} E_{kl}\otimes \GRes_{kl}
  =
  \sum_{i,j=1}^{N} \Gm_{ij} \otimes E_{ij}
\end{equation*}
with $E_{ij}=E_{ij}^{(n)}:= (\delta_{ki}\delta_{jl})_{k,l=1}^{n}\in \CC^{n\times n}$ for corresponding $n\in \NN$.
More generally, for any $\Rb \in \CC^{m\times m}\otimes \CC^{N\times N}$ we will denote its coefficients in the standard basis of $\CC^{N\times N}$ by $P_{ij} (\Rb), 1\leq i, j \leq N$, so that
\begin{equation*}
  \Rb
  =
  \sum_{i,j=1}^{N} P_{ij}(\Rb)\otimes E_{ij}
  .
\end{equation*}
In particular, we have $P_{ij}(\GRes) = \Gm_{ij}$.
Here is our main technical result.

\begin{thm}[Local law for the linearization] \label{thm:main}
  Let $p\in \CC \la \xb, \yb, \yb^* \ra$ be a self-adjoint polynomial with $p(0,0,0)=1$  and let $\Lb$ be a nilpotent linearization of $p$ be defined as in \eqref{eq:linearization}.
  Let $\GSol(z)$ be a solution of the corresponding \emph{DEL} \eqref{eq:MDE} constructed as in Lemma~\ref{lem:MDEexistence}.
  Let $\Hb$ be defined as in \eqref{eq:Hdef}.
  Suppose that the families of random matrices $\Xb, \Yb$ satisfy conditions \textup{(\textbf{H1})-(\textbf{H4})} and that $\GSol(z)$ satisfies \textup{(\textbf{M1})-(\textbf{M2})}  for some fixed $\epsB>0$. 
  Then the local law holds for $\Hb$  in the $\epsB$-bulk up to the optimal scale, i.e.,  for any $\gamma>0$ we have 
  \begin{equation}
    \label{eq:locallaw}
    \max_{i,j\in\llbr N \rrbr} \|\Gm_{ij}(z)-\GSol(z)\delta_{ij}\|
    \prec
    \sqrt{\frac{1}{N \Im z}}
    ,\qquad
    \Big\|\frac{1}{N} \sum_{i=1}^{N}\Gm_{ii}(z)-\GSol(z)\Big\|
    \prec
    \frac{1}{N \Im z}
  \end{equation}
  uniformly for $z\in D_{\kappa, \gamma}$ with $D_{\kappa, \gamma}:= \{z\in \CC\; : \;\Re z\in B_{\epsB}, \; N^{-1+\gamma} \leq \Im z \leq 1\}$.
\end{thm}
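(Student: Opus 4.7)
The plan is to follow the scheme established for Kronecker random matrices in \cite{AltErdoKrugNemi_Kronecker}, adapting each step to accommodate the generalized resolvent $(\Hb - zJ\otimes I_N)^{-1}$ in which the rank-one matrix $J$ replaces the identity as coefficient of the spectral parameter. Four ingredients are already available: the deterministic a priori bound $\|\GRes(z)\| \prec 1+1/\Im z$ obtained by applying Lemma~\ref{lem:trivBound} with $\Acc=\CC^{N\times N}$ together with the standard high-probability bound $\|X_\alpha\|+\|Y_\beta\| \lesssim 1$ derivable from (H4); the boundedness of $\GSol$ on the $\epsB$-bulk from (M1); the invertibility of the stability operator from (M2); and the concentration of quadratic forms in independent entries which underlies all large-deviation estimates.

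First I would derive a perturbed version of the DEL \eqref{eq:MDE} for the diagonal blocks $\Gm_{ii}$. Applying the Schur complement formula after removing the $i$-th macroblock of $\Hb$ (i.e.\ the rows and columns indexed by the $i$-th coordinate of the $N$-dimensional factor) and taking the conditional expectation with respect to the removed entries yields
\begin{equation*}
  -\Gm_{ii}^{-1} = zJ - K_0 + \SuOp\!\left[\frac{1}{N}\sum_k \Gm_{kk}\right] + D_i,
\end{equation*}
where $D_i$ collects (a) conditionally centred quadratic forms in the $i$-th row/column entries of $X_\alpha$ and $Y_\beta$, and (b) error terms from the resolvent identity $\Gm^{(i)}_{kk} = \Gm_{kk} - \Gm_{ki}\Gm_{ii}^{-1}\Gm_{ik}$ relating the reduced and full resolvents. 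Blockwise large-deviation estimates of Hanson--Wright type combined with the a priori bound yield $\|D_i\|\prec \Psi := \sqrt{\Im\langle e_1,\Gm_{ii}\,e_1\rangle/(N\Im z)} + 1/(N\Im z)$ once the companion off-diagonal bound $\max_{i\neq j}\|\Gm_{ij}\| \prec \Psi$ is in hand; the latter is extracted from the same concentration estimates applied directly to the off-diagonal Schur expansion.

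Subtracting the defining equation for $\GSol$ from the perturbed DEL and Taylor expanding linearises the difference, yielding $\Gm_{ii}-\GSol \approx \GStOp^{-1}[\GSol\, D_i\,\GSol]$ to leading order in $\|\Gm_{ii}-\GSol\|$. Assumptions (M1)--(M2) keep this inversion uniformly controlled on the $\epsB$-bulk, so a continuity/bootstrap argument in the spectral parameter starting at $\Im z = 1$ (where the a priori bound already provides smallness) and descending along a polynomially-fine grid propagates the entrywise local law down to $\Im z \geq N^{-1+\gamma}$. For the tracial statement, the improved $1/(N\Im z)$ rate comes from the fluctuation-averaging mechanism of \cite[Section~5]{AltErdoKrugNemi_Kronecker}: iterated partial-expectation decompositions expose additional cancellations in $(1/N)\sum_i D_i$ beyond what the pointwise estimate gives, and passing through $\GStOp^{-1}$ transports this gain to the averaged quantity.

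The main obstacle is the interaction between the rank-deficient structure of $J$ and the bootstrap scheme. In the standard Kronecker setup the Ward identity $\Im \GRes = \Im z \cdot \GRes^*\GRes$ provides an automatic lower bound on $\Im\Gm_{ii}$ of order $\Im z \cdot \|\Gm_{ii}\|^2$, which drives both the a priori control and the self-improving estimate. Here $\Im(zJ\otimes I_N) = \Im z \cdot J\otimes I_N$ has rank $N$ rather than $mN$, so the analogous identity controls only the projection of $\Gm_{ii}$ onto the range of $J$, namely its $(1,1)$-entry. The a priori bound of Lemma~\ref{lem:trivBound}, rooted in the nilpotency of the linearization, serves as the substitute; one must further verify that the reduced resolvents $\Gm^{(i)}$ inherit the same bound, which is automatic because the nilpotency condition concerns only the $\CC^{m\times m}$ coefficients and is preserved under deletion of a coordinate in the $\CC^{N\times N}$ factor.
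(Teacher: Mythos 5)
Your overall architecture (perturbed Dyson equation for $\Gm_{ii}$ via the Schur complement, large deviation bounds on the error $D_i$, inversion of $\GStOp$ under \textup{(\textbf{M1})--(\textbf{M2})}, a bootstrap in the spectral parameter, and fluctuation averaging for the tracial bound) matches the paper's, and you correctly identify the failure of the Ward identity for the generalized resolvent as the central obstacle. But your proposed resolution of that obstacle does not work, and this is a genuine gap. The fluctuation bound $\|D_i\|\prec\Psi$ requires control of $\frac{1}{N^2}\Tr\,\GRes^*\GRes$ (over \emph{all} $m^2$ blocks, since the quadratic forms in $D_i$ involve every block of $\GRes^{(i)}$ through the coefficient matrices $K_\alpha, L_\beta$) at the level $\frac{1}{N\eta}$. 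The partial Ward identity coming from $\Im(zJ\otimes I_N)=\eta J\otimes I_N$ only controls $\GRes^*(J\otimes I_N)\GRes$, i.e.\ the first block-row — which is also why your ansatz $\Psi=\sqrt{\Im\langle e_1,\Gm_{ii}e_1\rangle/(N\eta)}+\cdots$ cannot close the estimate. And the a priori operator-norm bound of Lemma~\ref{lem:trivBound} gives only $\frac{1}{N^2}\Tr\,\GRes^*\GRes\le \frac{Cm}{N}(1+\eta^{-1})^2\sim\frac{1}{N\eta^2}$, which is worse by a factor $\eta^{-1}$ and diverges at $\eta=N^{-1+\gamma}$ for $\gamma<1/2$; it is not a substitute for the Ward identity.

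The missing idea is the \emph{double regularization}: one introduces $\Ress{\imunit\epsF}(z)=(\Hb-zJ\otimes I_N-\imunit\epsF\, I\otimes I_N)^{-1}$, where the regularizer sits on the full identity, and compares $\Ress{\imunit\epsF}(z)$ with $\Ress{\imunit(\eta+\epsF)}(E)$ via the resolvent identity. The difference of the two perturbations is $\imunit\eta(I-J)\otimes I_N$, of norm $\eta$, and this factor is exactly compensated by the a priori bound $\eta\|\Ress{\imunit\epsF}(z)\|\le C$ — this is where nilpotency actually enters the Hilbert--Schmidt estimate, as a prefactor, not as a replacement for the Ward identity. Since $\Ress{\imunit(\eta+\epsF)}(E)$ is an honest resolvent of the self-adjoint matrix $\Hb-EJ\otimes I_N$ at spectral parameter $\imunit(\eta+\epsF)$, the Ward identity and the Kronecker local law of \cite{AltErdoKrugNemi_Kronecker} (Lemma~\ref{lem:LLgoodB}) apply to it and yield $\frac1N\normHS{\Ress{\imunit(\eta+\epsF)}(E)}\prec\sqrt{1/(N(\eta+\epsF))}$, which then transfers back to $\GRes$. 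A related secondary omission: the continuity argument must first run in the regularization parameter $\epsF$ at fixed $\eta=1$ (from a large $\epsF_1$ where $\Lambda^{\epsF}\le\vartheta^{\epsF}$ holds trivially, down to $\epsF=0$) to establish the starting point of the bootstrap in $\eta$; descending in $\eta$ alone from $\Im z=1$ does not suffice because even at $\eta=1$ the needed smallness of $\Lambda$ for the unregularized generalized resolvent is not a priori available.
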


\begin{proof}[Proof of Theorem~\ref{thm:mainPoly}]
It follows immediately from Theorem~\ref{thm:main} and the Schur complement formula \eqref{eq:schur}.  
\end{proof} 
The rest of this section is devoted to the proof of Theorem~\ref{thm:main}.
Throughout this section we will use regularizations of $\GRes$ and $\GSol$.
For $z,\omega \in \CC_{+}$ define
\begin{equation*} 
  \Ress{\omega}(z)
  =
  \big( \Hb - zJ\otimes I_N - \omega I\otimes I_N \big)^{-1}
\end{equation*}
and let $\Soll{\omega}(z)$ be the solution of the regularized DEL 
\begin{equation}\label{eq:MregA}
  - \big(\Soll{\omega}(z)\big)^{\, -1} 
  =
  zJ + \omega I - K_0 + \SuOp[\Soll{\omega}(z)]
\end{equation}
that is analytic in $z$ and $\omega$ and has positive definite imaginary part.  Note that the existence of a solution to \eqref{eq:MregA} was shown earlier \cite{Voic95,HeltRaFaSpei07} and analyticity in $\omega$ and $z$ can be inferred from this general theory as well.  
As an alternative, analyticity in $\omega$ and $z$ can also be seen directly from an application of the implicit function theorem by differentiating \eqref{eq:MregA}. This also demonstrates the role of the stability operator from assumption (\textbf{M2}) for the regularity of the solution $\Soll{\omega}(z)$.
In fact, differentiating yields
\begin{equation}\label{Mderiv}
  - \big(\Soll{\omega}(z)\big)^{\,-1}  \partial \Soll{\omega}(z)  \, \big(\Soll{\omega}(z) \big)^{\,-1} 
  =
  K + \SuOp[\partial \Soll{\omega}(z)]
  ,
\end{equation}
where either $\partial = \partial_\omega$ with $K=I$ or $\partial = \partial_z$ with $K=J$, depending on the variable we are interested in. 
After rearranging the terms, the above equation can be rewritten as
\begin{equation*}
  \GStOp_{\omega}[\partial \Soll{\omega}(z)]
  =
  -\Soll{\omega}(z)K\Soll{\omega}(z)
  ,
\end{equation*}
where
  \begin{equation*}
    \GStOp_{\omega}\,:\, \CC^{m\times m} \rightarrow \CC^{m\times m}
    ,\quad
    \GStOp_{\omega}(R)
    :=
    R- \Soll{\omega}(z)\, \Gamma[R] \, \Soll{\omega}(z)
    .
  \end{equation*}
  From \cite[Lemma~3.7]{AltErdoKrugNemi_Kronecker} and the trivial  bound $\|\Soll{\omega}(z)\|\leq (\Im \omega)^{-1}$ we have that $\| \GStOp_{\omega}^{-1}\| \leq (\Im \omega)^{-10}$.
  Using implicit function theorem we conclude that $\|\partial \Soll{\omega}(z) \| \leq (\Im \omega)^{-12}$ and that $\Soll{\omega}(z)$ is analytic in $\omega$ and  $z$. 

  The next lemma collects some properties of the regularizations $\Ress{\omega}(z)$ and $\Soll{\omega}(z)$.
  \begin{lem}\label{lem:RegBnd}
    There exists $C>0$ such that 
    \begin{itemize}
      \setlength{\itemsep}{0em}
    \item[(i)] uniformly on $E\in \RR$, $\eta > 0 $ and $\epsF \geq 0 $
      \begin{equation}
        \label{eq:GregB}
        \|\Ress{\imunit \epsF}(z)\| \leq C \, \bigg( 1+\frac{1}{\eta } \bigg)
        ;
      \end{equation}
    \item[(ii)] if \textup{(\textbf{M1})} holds, then uniformly on $E\in B_{\epsB}$, $\eta \geq 0$ and $\epsF \geq 0 $
      \begin{equation}
        \label{eq:MregB}
        \| \Soll{\imunit \epsF}(z)\|\leq C
        ,\quad
        \|(\Soll{\imunit \epsF}(z))^{-1}\| \leq C (1+|z| + \epsF)
        ;
      \end{equation}
    \item[(iii)] if additionally \textup{(\textbf{M2})} holds, then uniformly on $E\in B_{\epsB}$, $0\leq \eta \leq 1$ and $\epsF \geq 0 $
      \begin{equation}
        \label{eq:LregB}
        \| (\GStOp_{\imunit \epsF}(z))^{-1}\| \leq C
        .
      \end{equation}
    \end{itemize}
    \end{lem}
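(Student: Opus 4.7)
Parts (ii) and (iii) will be obtained by perturbation around the $\epsF=0$ case, using \textup{(\textbf{M1})} and \textup{(\textbf{M2})} as the a priori inputs, while part (i) follows from Lemma~\ref{lem:trivBound} combined with the trivial dissipativity bound on the resolvent.

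For (i), Lemma~\ref{lem:trivBound} applied in $\Acc=\CC^{N\times N}$ yields $\|\Ress{0}(z)\|\leq C(1+1/\eta)$, while the inequality $\Im(\Hb - zJ\otimes I_N - \imunit\epsF I\otimes I_N)\leq -\epsF\, I\otimes I_N$ for $\epsF>0$ gives the trivial bound $\|\Ress{\imunit\epsF}(z)\|\leq\epsF^{-1}$. The second resolvent identity reads
\[
\Ress{\imunit\epsF}(z) = \Ress{0}(z)\,\big(I\otimes I_N -\imunit\epsF\,\Ress{0}(z)\big)^{-1},
\]
so for $\epsF\leq 1/(2\|\Ress{0}(z)\|)$ a Neumann expansion gives $\|\Ress{\imunit\epsF}(z)\|\leq 2\|\Ress{0}(z)\|$, while for $\epsF>1/(2\|\Ress{0}(z)\|)$ the trivial bound gives $\|\Ress{\imunit\epsF}(z)\|\leq\epsF^{-1}\leq 2\|\Ress{0}(z)\|$. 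In both regimes $\|\Ress{\imunit\epsF}(z)\|\lesssim 1+1/\eta$, which is \eqref{eq:GregB}.

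For (ii), I would first prove the upper bound $\|\Soll{\imunit\epsF}(z)\|\leq C$. Subtracting \eqref{eq:MDE} from \eqref{eq:MregA} and writing $D:=\Soll{\imunit\epsF}(z)-\GSol(z)$ gives, after using the elementary identity $A^{-1}-B^{-1}=B^{-1}(B-A)A^{-1}$, the implicit equation
\[
\GStOp[D] = \imunit\epsF\,\Soll{\imunit\epsF}(z)\,\GSol(z) + D\,\SuOp[D]\,\GSol(z).
\]
Under \textup{(\textbf{M1})}-\textup{(\textbf{M2})} a contraction-mapping argument with $\|D\|$ as the small parameter yields $\|D\|\leq C\epsF$ for $\epsF\leq \epsF_0$, where $\epsF_0>0$ depends only on $C_3$ and $\|\SuOp\|$. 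For $\epsF\geq \epsF_0$ the trivial bound $\|\Soll{\imunit\epsF}\|\leq \epsF^{-1}\leq \epsF_0^{-1}$, which follows from the MDE-implied inequality $\Im(-\Soll{\imunit\epsF}^{-1})\geq \epsF I$, takes over, giving $\|\Soll{\imunit\epsF}(z)\|\leq C$ uniformly. The lower bound \eqref{eq:MregB} is then immediate from the regularized \emph{DEL}:
\[
\|\Soll{\imunit\epsF}(z)^{-1}\|
\leq
\|zJ+\imunit\epsF I-K_0\|+\|\SuOp\|\cdot\|\Soll{\imunit\epsF}(z)\|
\lesssim
1+|z|+\epsF.
\]

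For (iii), the difference $\GStOp_{\imunit\epsF}-\GStOp$ has operator norm bounded by $2\|\SuOp\|(\|\GSol\|+\|\Soll{\imunit\epsF}\|)\|D\|=O(\epsF)$, so a Neumann-series perturbation based on \textup{(\textbf{M2})} yields $\|\GStOp_{\imunit\epsF}^{-1}\|\leq 2C_3$ for $\epsF\leq \epsF_1$, with $\epsF_1>0$ depending only on $C_3$ and $\|\SuOp\|$. For $\epsF>\epsF_1$ the trivial bound $\|\Soll{\imunit\epsF}\|\leq \epsF^{-1}$ shows that $R\mapsto \Soll{\imunit\epsF}\,\SuOp[R]\,\Soll{\imunit\epsF}$ has operator norm at most $\|\SuOp\|/\epsF^2$, which for $\epsF$ sufficiently large makes $\GStOp_{\imunit\epsF}$ a small perturbation of the identity and hence invertible with uniformly bounded inverse; the restriction $0\leq\eta\leq 1$ keeps $|z|$ controlled so that the constants in the perturbation argument for (ii) remain uniform. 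The main technical obstacle is ensuring uniformity of the contraction-mapping constants in (ii) as $\eta,\epsF\to 0$ simultaneously, where both regularizations vanish; the bulk assumption $E\in B_{\epsB}$ together with \textup{(\textbf{M1})}-\textup{(\textbf{M2})} and the joint analyticity of $\Soll{\imunit\epsF}(z)$ in $z$ and $\imunit\epsF$ established just above the lemma is what makes this boundary regime tractable.
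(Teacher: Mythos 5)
Your parts (i) and (ii) are correct and follow essentially the paper's route: for (i) the paper likewise combines the a priori bound of Lemma~\ref{lem:trivBound} with the trivial bound $\|\Ress{\imunit \epsF}(z)\|\leq \epsF^{-1}$ and a resolvent identity (it does so without your case split, writing $\Ress{\imunit\epsF}=\GRes+\imunit\epsF\,\Ress{\imunit\epsF}\GRes$ and bounding directly); for (ii) your quadratic self-consistent equation for $D=\Soll{\imunit\epsF}-\GSol$ and contraction argument is precisely the content of the stability estimate \eqref{eq:SolStab} that the paper invokes, and the inverse bound is read off the regularized equation in both treatments. (For completeness the contraction needs the continuity/analyticity of $\Soll{\imunit\epsF}$ in $\epsF$ to guarantee $D$ starts in the contraction ball, which you correctly flag.)

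Part (iii), however, has a genuine gap: your two regimes do not exhaust $\epsF\geq 0$. The perturbative regime requires $\epsF\leq\epsF_1$ with $\epsF_1$ determined by $C_3$ and $\|\SuOp\|$ (so that $\|\GStOp_{\imunit\epsF}-\GStOp\|\,\|\GStOp^{-1}\|<1$), while the regime in which $\GStOp_{\imunit\epsF}$ is a small perturbation of the identity requires $\|\Soll{\imunit\epsF}\|^2\|\SuOp\|<1$, which via the trivial bound $\|\Soll{\imunit\epsF}\|\leq\epsF^{-1}$ only covers $\epsF>\epsF_2$ with $\epsF_2$ of order $\sqrt{\|\SuOp\|}$. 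In general $\epsF_1\ll\epsF_2$, so the intermediate range $\epsF_1\leq\epsF\leq\epsF_2$ is uncovered, and \textup{(\textbf{M2})} says nothing about $\GStOp_{\imunit\epsF}$ there: the regularization $\imunit\epsF I$ multiplies the full identity and cannot be absorbed into the spectral parameter $zJ$, so $\GStOp_{\imunit\epsF}$ is not a small perturbation of any operator you control. The paper closes this range by viewing \eqref{eq:MregA} as a standard matrix Dyson equation with spectral parameter $\omega=\imunit\epsF$ and expectation matrix $K_0-zJ$, and invoking the quantitative off-spectrum stability bound of \cite[Lemma~3.7]{AltErdoKrugNemi_Kronecker}: since the associated self-consistent density of states is supported on the real axis, $\mathrm{dist}(\imunit\epsF,\RR)\geq\epsF_1$ gives $\|\GStOp_{\imunit\epsF}^{-1}\|\leq\|\Soll{\imunit\epsF}\|^2\|(\Soll{\imunit\epsF})^{-1}\|^{9}\epsF_1^{-8}$, which is uniformly bounded on $\epsF\leq\epsF_2$, $E\in B_{\epsB}$, $0\leq\eta\leq1$ by part (ii) — this is also where the restriction $\eta\leq1$ is actually needed, to keep $\|(\Soll{\imunit\epsF})^{-1}\|\lesssim 1+|z|+\epsF$ under control. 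Without this (or some substitute argument for moderate $\epsF$) the uniform bound \eqref{eq:LregB} is not established.
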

    \begin{proof}
      Firstly, by specializing Lemma~\ref{lem:trivBound} for $\Acc = \CC^{m\times m}\otimes \CC^{N\times N}$, $\xb=\Xb $, $\yb=\Yb$ and $\Lb= \Hb$, we obtain that there exists $C_1>0$ such that
      \begin{equation*}
        \| \GRes (z) \|
        \leq
        C_1 \bigg( 1+\frac{1}{\eta} \bigg)
      \end{equation*}
      By the resolvent identity, for any $E\in B_{\epsB}$, $\eta\geq 0$ and $\epsF\geq 0$ we have that
      \begin{equation*}
        \Ress{\imunit \epsF}(z)
        =
        \GRes(z) + \imunit \epsF\,\Ress{\imunit \epsF}(z)\,\GRes(z)
        ,
      \end{equation*}
      therefore from the trivial bound $\|\Ress{\imunit \epsF}(z) \| \leq \epsF^{-1}$ we obtain
      \begin{equation*}
        \|\Ress{\imunit \epsF}(z)\|
        \leq
        2\| \GRes(z)\|
        .
      \end{equation*}
      By the stability of the solution of the DEL \eqref{eq:SolStab} and (\textbf{M1}), there exists $C_2>0$ such that for any $E\in B_{\epsB}$, $\eta\geq 0$ and $0\leq \epsF \leq 1$
      \begin{equation*}
        \| \Soll{\imunit \epsF}(z)\|
        \leq
        C_2
        .
      \end{equation*}
      On the other hand, if we apply the trivial bound $\Soll{\imunit \epsF}(z) \leq \epsF^{-1}$ for $\epsF \geq 1$, we obtain that
      \begin{equation}\label{eq:MregB2}
        \|\Soll{\imunit \epsF}(z)\|
        \leq
        \max\{1, C_2\}
        =:
        C_3
      \end{equation}
      for $E\in B_{\epsB}$, $\eta\geq 0$ and $\epsF\geq 0$.
      Now, using \eqref{eq:MregA} and \eqref{eq:MregB2}, there exists $C_4>0$ such that for all $E\in B_{\epsB}$, $\eta\geq 0$ and $\epsF\geq 0$
      \begin{equation}\label{eq:MinvRegB1}
        \|(\Soll{\imunit \epsF}(z))^{-1}\|
        \leq
        C_4(1+|z| + \epsF)
        .
      \end{equation}
      To obtain \eqref{eq:LregB} note that
      \begin{equation}
        \|\GStOp_{\imunit \epsF} - \GStOp \|
        \leq
        \|\Soll{\imunit \epsF}(z)-\GSol(z)\|\, \big(\|\Soll{\imunit \epsF}(z)\| + \|\GSol(z)\| \big)\, \|\SuOp \|
        \leq
        C_5 \epsF
      \end{equation}
      for some $C_5>0$.
      Therefore by (\textbf{M2}) there exists $\epsilon_1>0$ and $C_6>0$ such that for $0\leq \epsF \leq \epsilon_1$
    \begin{equation*}
      \|(\GStOp_{\imunit \epsF})^{-1}\|
      =
      \|\GStOp^{-1}(I-(\GStOp-\GStOp_{\imunit \epsF})\GStOp^{-1})^{-1}\|
      \leq
      2\|\GStOp^{-1}\|
      \leq
      C_6
      .
    \end{equation*}
    By the definition of $\GStOp_{\imunit \epsF}(z)$ and the trivial bound $\|\Soll{\imunit \epsF}(z)\|\leq \epsF^{-1}$, there exists $\epsilon_2>0$ such that for $\epsF \geq \epsilon_2$
    \begin{equation*}
      \| (\GStOp_{\imunit \epsF})^{-1}\|
        \leq
        \frac{1}{1-\|\Soll{\imunit \epsF}(z)\|^2 \|\SuOp\|}
        \leq
        2
        .
      \end{equation*}
      Finally, by \cite[Lemma~3.7]{AltErdoKrugNemi_Kronecker}, compactness of $B_{\epsB}$, \eqref{eq:MregB2} and \eqref{eq:MinvRegB1} there exists $C_7>0$ such that for all $E\in B_{\epsB}$, $0\leq \eta \leq 1$ and $\epsilon_1 \leq \epsF \leq \epsilon_2$
      \begin{equation*}
        \| (\GStOp_{\imunit \epsF})^{-1}\|
        \leq
        \frac{\| \Soll{\imunit \epsF}(z)\|^2 \|(\Soll{\imunit \epsF}(z))^{-1}\|^{9}}{(\mathrm{dist}(\mathrm{supp} (\rho_{z}),\imunit \epsF))^{8}}
        \leq
        C_7
        ,
      \end{equation*}
      where $\rho_z(x):= \lim_{u\downarrow 0} (\pi m)^{-1}\Tr \Im \Soll{ x + \imunit u}(z)$.
To finish the proof, take $C>\max\{2,2C_1,C_3,C_4,C_6,C_7\}$.
\end{proof}

Now we state the local law for the regularized resolvent.

\begin{lem}
  \label{lem:LLgoodB}
  Uniformly for $E\in B_{\epsB}$, $0\leq \eta \leq 1$ and $\epsF\geq N^{-1+\gamma}$
  \begin{equation}\label{eq:LLgoodB}
    \max_{i,j\in\llbr N \rrbr} \|P_{ij}(\Ress{\imunit \epsF}(E+\imunit \eta))-\Soll{\imunit \epsF}(E+\imunit \eta)\, \delta_{ij}\|
    \prec
    \sqrt{\frac{1}{N\epsF}}
    .
  \end{equation}
\end{lem}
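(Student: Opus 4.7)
The plan is to recognize the statement as an instance of the entrywise local law for Kronecker random matrices established in \cite{AltErdoKrugNemi_Kronecker}. Indeed, for each fixed $z \in \CC_+$, the matrix $\widetilde{\Hb}(z) := \Hb - zJ\otimes I_N$ is itself a Kronecker random matrix: its random part coincides with that of $\Hb$, and its deterministic expectation is simply $(K_0 - zJ)\otimes I_N$, which has non-negative imaginary part because $\Im z \ge 0$. The quantity $\Ress{\imunit\epsF}(z)$ is then precisely the resolvent of $\widetilde{\Hb}(z)$ at spectral parameter $\imunit\epsF$, while the regularized DEL \eqref{eq:MregA} is exactly the corresponding matrix Dyson equation from \cite{AltErdoKrugNemi_Kronecker}, with $\Soll{\imunit\epsF}(z)$ as its unique solution with positive-definite imaginary part. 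This identification is the crucial conceptual step: it allows us to import the entire local-law machinery for Kronecker matrices without having to redo it here.

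To invoke that machinery uniformly on the set $E\in B_{\epsB}$, $0\le \eta\le 1$, $\epsF\ge N^{-1+\gamma}$, three \emph{a priori} bounds must be supplied, and all three are already available from Lemma~\ref{lem:RegBnd}: the resolvent bound \eqref{eq:GregB} controls $\|\Ress{\imunit\epsF}(z)\|$, the bound \eqref{eq:MregB} controls the solution $\Soll{\imunit\epsF}(z)$ of the regularized DEL together with its inverse, and \eqref{eq:LregB} controls the inverse of the regularized stability operator $\GStOp_{\imunit\epsF}(z)$. With these three inputs in hand, the proof of the Kronecker local law proceeds along familiar lines: one writes out the Schur complement identity for $P_{ii}(\Ress{\imunit\epsF}(z))$, uses the moment assumption \textbf{(H4)} to derive large deviation estimates for the off-diagonal contributions, and obtains an approximate self-consistent equation of the form
\begin{equation*}
  -\bigl(P_{ii}(\Ress{\imunit\epsF})\bigr)^{-1}
  \;=\;
  zJ+\imunit\epsF\, I - K_0 + \SuOp\Bigl[\frac{1}{N}\sum_j P_{jj}(\Ress{\imunit\epsF})\Bigr] + D_i,
\end{equation*}
with a perturbation $D_i$ whose size is controlled by the large deviation bounds and, for its averaged version, by the standard fluctuation averaging argument. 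Inverting this equation through the regularized stability operator and using \eqref{eq:LregB} converts smallness of $D_i$ into the claimed bound $\|P_{ij}(\Ress{\imunit\epsF}) - \Soll{\imunit\epsF}\delta_{ij}\| \prec (N\epsF)^{-1/2}$.

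As is standard, the argument is run through a continuity (bootstrap) step: one starts at $\epsF$ of order one, where \eqref{eq:GregB} and \eqref{eq:MregB} give a trivial $\mathcal{O}(1)$ bound on $\Ress{\imunit\epsF} - \Soll{\imunit\epsF}$, and propagates the strong bound down to $\epsF\ge N^{-1+\gamma}$ using that the deterministic bounds of Lemma~\ref{lem:RegBnd} are uniform in $\epsF$ on this range. The main obstacle, conceptually, would have been to verify that the stability of the polynomial DEL survives the presence of the non-standard spectral parameter $zJ$ once we additionally introduce the regularization $\imunit\epsF I$; but this is already absorbed into Lemma~\ref{lem:RegBnd}, so the remaining work is essentially bookkeeping to track the $\eta$- and $\epsF$-dependence of constants and to confirm that the proof in \cite{AltErdoKrugNemi_Kronecker} goes through verbatim with our bounds as inputs.
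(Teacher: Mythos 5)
Your proposal is correct and takes essentially the same route as the paper: the paper's proof consists of viewing $\Ress{\imunit \epsF}(z)$ as the resolvent of the Kronecker matrix $\Hb - zJ\otimes I_N$ at spectral parameter $\imunit\epsF$ and invoking the entrywise local law (Lemma~B.1) of \cite{AltErdoKrugNemi_Kronecker}, with its hypotheses supplied exactly by the bounds \eqref{eq:MregB} and \eqref{eq:LregB} of Lemma~\ref{lem:RegBnd}. The only (inconsequential) slip is a sign: it is $zJ - K_0$, not $K_0 - zJ$, that has nonnegative semi-definite imaginary part for $\Im z \geq 0$.
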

\begin{proof}
  Follows from Lemma~B.1 in \cite{AltErdoKrugNemi_Kronecker}.
  Indeed, by \eqref{eq:MregB} and \eqref{eq:LregB} for all $E\in B_{\epsB}$, $0\leq \eta \leq 1$ and $\epsF \geq N^{-1+\gamma}$ we have
  \begin{equation*}
    \max_{i,j\in\llbr N \rrbr} \|P_{ij}(\Ress{\imunit \epsF}(E+\imunit \eta))-\Soll{\imunit \epsF}(E+\imunit \eta)\, \delta_{ij}\|
    \prec
      \frac{1}{1+\epsF}\sqrt{\frac{\|\Soll{\imunit \epsF}(E+\imunit \eta)\|}{N\epsF}}+\frac{1}{(1+\epsF^2)N}+\frac{1}{(1+\epsF^2)N\epsF}
    .
  \end{equation*}
  The fact that $\Soll{\imunit \epsF}(E+\imunit \eta)$ is bounded by \eqref{eq:MregB} yields \eqref{eq:LLgoodB}.
\end{proof}
We are ready to prove the main theorem. 
\begin{proof}[Proof of Theorem~\ref{thm:main}]
  By \cite[Lemma~4.4]{AltErdoKrugNemi_Kronecker} and Lemma~\ref{lem:RegBnd}, for $E\in B_{\epsB}$, $0\leq \eta \leq 1$ and $\epsG\geq 0$
  \begin{equation*}
    \max_{1\leq i \leq N}\|P_{ii}(\Ress{\imunit \epsG}(z)) - \Soll{\imunit \epsG}(z) \| \, \chi(\Lambda^{\epsG} \leq \vartheta^{\epsG})
    \prec
    \frac{1}{\sqrt{N}}+\Lambda^{\epsG}_{\mathrm{hs}}+\| ( \Soll{\imunit \epsG}(z))^{-1}\|\, \Big(\Lambda^{\epsG}_{\mathrm{w}}\Big)^{2}
    ,
  \end{equation*}
  where $\chi(A)$ denotes the indicator function of an event $A$ and we introduced
  \begin{align*} 
  &\Lambda_{\mathrm{hs}}^{\epsG}(z)
  :=
  \frac{1}{N}\Big(\Tr \Ress{\imunit \epsG}(z)^* \Ress{\imunit \epsG}(z)\Big)^{1/2}
    ,
    \\
  &\Lambda_{\mathrm{w}}^{\epsG}(z)
  :=
  \frac{1}{\sqrt{2N}}\max_{i}\Big(\Tr P_{ii}[\Ress{\imunit \epsG}(z)^*\Ress{\imunit \epsG}(z) +\Ress{\imunit \epsG}(z)\, \Ress{\imunit \epsG}(z)^*]\Big)^{1/2},
   \\
  &
   \Lambda^{\epsG}(z)
  :=
  \max_{i,j\in\llbr N \rrbr} \|P_{ij}(\Ress{\imunit \epsG}(z))-\Soll{\imunit \epsG}(z)\delta_{ij}\|\,,
\end{align*}
   as well as 
    \begin{equation*}
    \vartheta^{\epsG}
    :=
    \frac{1}{4(\|(\GStOp_{\epsG})^{-1}\|\; \| \Soll{\imunit \epsG}(z)\| \; \|\SuOp\| + \|( \Soll{\imunit \epsG}(z))^{-1}\|)}
    .
  \end{equation*}
  To estimate $\Lambda_{\mathrm{hs}}^{\epsG}$ note that $\Lambda_{\mathrm{hs}}^{\epsG}=N^{-1}\normHS{\Ress{\imunit \epsG}(z)}$,  where for any $n\in \NN$ we denote by $\normHS{\,\cdot \,}: \CC^{n \times n} \rightarrow [0, +\infty)$ the usual Hilbert-Schmidt norm, i.e., for any $R\in \CC^{n\times n}$
  \begin{equation*}
    \normHS{R}^2
    =
    \Tr R^* R
    .
  \end{equation*}
  By the resolvent identity $\Ress{\imunit \epsG}(z)=\Ress{\imunit (\eta+\epsG)}(E)-\imunit \eta  \, \Ress{\imunit \epsG}(z)  \, ((I_m-J)\otimes I_N) \, \Ress{\imunit (\eta+\epsG)}(E)  $, therefore
    \begin{align*}
      \normHS{\Ress{\imunit \epsG}(z)}
      &\leq
       \normHS{\Ress{\imunit (\eta+\epsG)}(E) } +\eta\,\normOp{\Ress{\imunit \epsG}(z)} \, \normHS{\Ress{\imunit (\eta+\epsG)}(E)}
      \\
      &\leq
        \normHS{ \Ress{\imunit (\eta+\epsG)}(E) } + C\, \normHS{\Ress{\imunit (\eta+\epsG)}(E) }
      \\
      &\lesssim
        \normHS{\Ress{\imunit(\eta+\epsG)}(E)}
        ,
    \end{align*}
    where we used \eqref{eq:GregB} to obtain the bound $\eta\normOp{\Ress{\imunit \epsG}(z)}\leq C$ for some $C>0$ uniformly on $E\in B_{\epsB}$, $0 < \eta \leq 1$ and $\epsG \geq 0$.
      Since $\Ress{\imunit(\eta+\epsG)}(E)$ is a resolvent with spectral parameter $\imunit (\eta + \epsG)$, we can apply to it the Ward identity, which together with Lemma~\ref{lem:LLgoodB} gives
    \begin{equation}\label{eq:HSbound}
      \frac{1}{N}\normHS{\Ress{\imunit(\eta + \epsG)}(E)}
      =
      \left( \frac{ \Tr \Im \Ress{\imunit (\eta + \epsG)}(E) }{N^2(\eta + \epsG)}\right)^{1/2}
      \prec
      \sqrt{\frac{1}{N(\eta + \epsG)}}
    \end{equation}
    uniformly for $E\in B_{\epsB}$, $N^{-1+\gamma} \leq \eta \leq 1$ and $\epsG\geq 0$.
    In order to estimate $\Lambda_{\mathrm{w}}^{\epsG}$, we introduce the norm $\| \, \cdot \, \|_{\mathrm{w}} : \CC^{m\times m} \otimes \CC^{N\times N} \rightarrow [0, +\infty)$ given by
    \begin{equation*}
      \| \Rb \|_{\mathrm{w}}^2
      =
      \max_{1\leq i \leq N} \Tr P_{ii}(\Rb \Rb^*)
      .
    \end{equation*}
    One can easily see that $\Lambda_{\mathrm{w}}^{\epsG}\sim N^{-1/2}\| \Ress{\imunit \epsG}(z) \|_{\mathrm{w}}$.
    Then similarly as for $\normHS{\, \cdot \,}$,
        \begin{align*}
     \normW{\Ress{\imunit \epsG}(z)}
      &\leq
        \normW{ \Ress{\imunit (\eta+\epsG)}(E) } + C\, \normW{\Ress{\imunit (\eta+\epsG)}(E) }
      \\
      &\lesssim
        \normW{\Ress{\imunit(\eta+\epsG)}(E)}
        .
    \end{align*}
        By applying again the Ward identity and Lemma~\ref{lem:LLgoodB} we obtain that uniformly for $E\in B_{\epsB}$, $N^{-1+\gamma}\leq \eta \leq 1$ and $\epsG \geq 0$
    \begin{equation*}
      \frac{1}{N}\normW{\Ress{\imunit (\eta + \epsG)}(E)}^2
      \lesssim
      \max_{1\leq i\leq N}\frac{\Im \Tr P_{ii}(\Ress{\imunit(\eta+\epsG)}(E))}{N(\eta+\epsG)}
      \prec
      \frac{1}{N(\eta+\epsG)}
      .
    \end{equation*}
    Together with \eqref{eq:HSbound} and \eqref{eq:MregB} this implies that
    \begin{equation}\label{eq:errTbndEpsG2}
      \max_{1\leq i \leq N}\|P_{ii}(\Ress{\imunit \epsG}(z)) - \Soll{\imunit \epsG}(z) \| \,\chi(\Lambda^{\epsG} \leq \vartheta^{\epsG})
    \prec
    \frac{1}{\sqrt{N}}+\sqrt{\frac{1}{N(\eta+\epsG)}}
  \end{equation}
  for $E\in B_{\epsB}$, $N^{-1+\gamma}\leq \eta \leq 1$ and $\epsG \geq 0$.

  For any $1\leq i, j \leq N, i\neq j,$ and $z\in \CC_{+}$ we have that 
  \begin{equation}
    \label{eq:31}
    \| P_{ij}(\Ress{\imunit \epsG}(z))\| \, \chi(\Lambda^{\epsG} \leq \vartheta^{\epsG})
    \prec
    \|P_{ii}(\Ress{\imunit \epsG}(z))\|\,\Lambda_{\mathrm{w}}^{\epsG}
    .
  \end{equation}
  This statement was proven in the form $|G_{ij}|\, \chi \prec | G_{jj}|\, \chi \Lambda_{\mathrm{w}}$ in the course of the proof of \cite[Lemma~4.3]{AltErdoKrugNemi_Kronecker} (see \cite[Eq. (4.23)]{AltErdoKrugNemi_Kronecker} and the discussion after it).
   The right-hand side of \eqref{eq:31} can be bounded by $ \|\Soll{\imunit (\eta + \epsG)}(E)\| \, \normW{\Ress{\imunit(\eta+\epsG)}(E)} + \Lambda^{ \epsG} \normW{\Ress{\imunit(\eta+\epsG)}(E)}$.
    The second term can be absorbed into $\Lambda^{ \epsG}$, so by using \eqref{eq:errTbndEpsG2} and \eqref{eq:MregB} we end up with the bound
    \begin{equation*}
      \max_{1\leq i,j \leq N}\|P_{ij}(\Ress{\imunit \epsG}(z)) - \Soll{\imunit \epsG}(z)\delta_{ij} \| \, \chi(\Lambda^{\epsG} \leq \vartheta^{\epsG})
    \prec
    \frac{1}{\sqrt{N}}+\sqrt{\frac{1}{N(\eta+\epsG)}}
  \end{equation*}
  uniformly on $E\in B_{\epsB}$, $N^{-1+\gamma}\leq \eta \leq 1$ and $\epsG \geq 0$

    Since $\vartheta^{\epsG}(z) \gtrsim \epsG^{-1}$ by \eqref{eq:MregB} and \eqref{eq:LregB}, and  $\Lambda^{ \epsG}(z) \prec \epsG^{-2}$ by \cite[Lemma~4.4, (i)]{AltErdoKrugNemi_Kronecker}, we can choose $\epsG_1>0$ such that for all $E\in B_{\epsB}$
    \begin{equation}\label{eq:LetaEpsG}
      \Lambda^{ \epsG_1}(E+\imunit)
      \leq
      \vartheta^{\epsG_1}(E+\imunit)
      .
    \end{equation}
    Then by \cite[Lemma~A.2]{AjanErdoKrug17} \eqref{eq:LetaEpsG} holds not only for $\epsG=\epsG_1$, but \emph{a.w.o.p.} for all $0\leq \epsG \leq \epsG_1$.
    In particular,  we will have that \emph{a.w.o.p.}
    \begin{equation*}
      \Lambda(E+\imunit)
      \leq
      \vartheta(E+\imunit)
      .
    \end{equation*}
    On the other hand, if we take $\epsG=0$ in \eqref{eq:errTbndEpsG2} we will get that for $E\in B_{\epsB}$ and $0\leq \eta \leq 1$
      \begin{equation*}
      \max_{1\leq i \leq N}\|\Gm_{ii}(z) - \GSol(z) \| \, \chi(\Lambda \leq \vartheta)
    \prec
    \frac{1}{\sqrt{N}}+\sqrt{\frac{1}{N\eta}}
    .
  \end{equation*}
  Applying \cite[Lemma~A.2]{AjanErdoKrug17} to $\Lambda(E+\imunit \eta)$ and $\vartheta(E+\imunit \eta)$ we get that for $E\in B_{\epsB}$ and $N^{-1+\gamma} < \eta \leq 1$
    \begin{equation*}
      \Lambda(E+\imunit \eta)
      \leq
      \vartheta(E+\imunit \eta)
      ,
    \end{equation*}
    which yields the first inequality in \eqref{eq:locallaw}.

    To prove the averaged local law we will use the fluctuations averaging mechanism, proof of which in a suitable form can be found in \cite[Proposition~4.6]{AltErdoKrugNemi_Kronecker},
    see also Section~10 of  \cite{ErdoYauBook} related previous proofs.
    To this end, we introduce conditional expectation with respect to the $i$th rows and columns of the matrices $X_{\alpha}$ and $Y_{\beta}$
    \begin{equation*}
      \EE_{i}[\, \cdot \,]
      :=
      \EE \Big[\, \cdot \, \Big| \Big\{X_{\alpha}(k,l), Y_{\beta}(k,l)\, : \, \alpha\in \llbr \alpha_*\rrbr, \beta\in \llbr \beta_*\rrbr, k,l\in \llbr N \rrbr \setminus \{i\}\Big\}\Big]
      ,
    \end{equation*}
    and a family of operators
    \begin{equation*}
      \Qcc_{i}[\, \cdot \,]
      :=
      \mathrm{Id}[\, \cdot \,] - \EE_{i}[\, \cdot \, ]
      .
    \end{equation*}
    By the Schur complement formula (see e.g. \cite[Section~4.1]{AltErdoKrugNemi_Kronecker}), for all $i\in \llbr N \rrbr$ we have that
 \begin{equation}\label{eq:Gii Schur expansion}
      -  \frac{1}{\Gm_{ii}} 
      =
      zJ - P_{ii}(\Hb) + \sum_{k,l \neq i} P_{ik}(\Hb) P_{kl}(\GRes^{\{i\}}) P_{li}(\Hb)
      ,
    \end{equation}
    where
    \begin{equation*}
      \GRes^{\{i\}}(z):
      =
      \Big( K_0\otimes I_N - \sum_{\alpha=1}^{\alpha_*} K_\alpha\otimes X_\alpha^{\{i\}} - \sum_{\beta=1}^{\beta_*}\big( L_{\beta}\otimes Y_\beta^{\{i\}}+L^*_{\beta}\otimes  Y_\beta^{\{i\}*} \big) - zJ \otimes I_N \Big)^{-1}
    \end{equation*}
    and matrices $X_\alpha^{\{i\}}$ and $Y_\alpha^{\{i\}}$ are obtained from $X_\alpha$ and $Y_\alpha$, respectively, by replacing their $i$th rows and columns by zero.
    We use the $1/G_{ii}$ notation for the inverse of the $m\times m$ matrix $G_{ii}$. 
    Taking the expectation $\mathbb{E}_i$ on both sides of \eqref{eq:Gii Schur expansion}, using \eqref{eq:Hdef}, the assumptions (\textbf{H1})-(\textbf{H3}) about the distribution of $\Xb^{(N)}$ and $\Yb^{(N)}$, as well as the independence of  $\GRes^{\{i\}}$ from $P_{ik}(\Hb) $ and $P_{ki}(\Hb)$, yields the equation
\begin{equation}\label{eq:perturbed DEL}
-\frac{1}{G_{ii}} = -\mathbb{E}_i\frac{1}{G_{ii}}-\Qcc_{i}\frac{1}{\Gm_{ii}} = z J-K_0+\SuOp\,\bigg[\frac{1}{N}\sum_{j=1}^NG_{jj}\bigg]+D_i\,,
\end{equation}
for $(G_{ii})_{i=1}^N$. Here, the  random error term $D_i=D_i(z)$ takes the form
\begin{equation*}
        D_i=-   \Qcc_{i}\frac{1}{\Gm_{ii}}
        + \SuOp\,\bigg[\frac{1}{N}\sum_{j\neq i}\big  (P_{jj}\,\GRes^{\{i\}} -\Gm_{jj}\big)\bigg] 
        - \SuOp\,\bigg[\frac{\Gm_{ii}(z)}{N}\bigg]\,.
\end{equation*}
We subtract  \eqref{eq:perturbed DEL} from the DEL \eqref{eq:MDE} and multiply  the result from the left by its solution $M$ and from the right by $G_{ii}$ to see that
 the difference between $\Gm_{ii}(z)$ and $\GSol(z)$ can be written as
    \begin{align}\label{identity}
      \Gm_{ii}(z)-\GSol(z)
      &=
        \GSol(z)\,\SuOp\,\bigg[\frac{1}{N}\sum_{j=1}^{N}\big(\Gm_{jj}(z)-\GSol(z)\big)\bigg]\,\Gm_{ii}(z)
      \\
      &\quad
        -  \GSol(z) \, \Qcc_{i}\,\bigg[\frac{1}{\Gm_{ii}(z)}\bigg] \,\Gm_{ii}(z)  \nonumber
       \\
      &\quad
        + \GSol(z)\, \SuOp\, \bigg[\frac{1}{N}\sum_{j\neq i}\big  (P_{jj}(\GRes^{\{i\}}(z))  -\Gm_{jj}(z)\big)\bigg]\,\Gm_{ii}(z) \nonumber
       \\
      &\quad
        - \GSol(z)\, \SuOp\, \bigg[\frac{\Gm_{ii}(z)}{N}\bigg]\,\Gm_{ii}(z) \nonumber
        .
    \end{align}
    Note, that by using the large deviation bounds (see e.g. \cite[Lemma~4.3]{AltErdoKrugNemi_Kronecker}) we have
    \begin{equation*}
      \bigg \|\, \Qcc_{i}\bigg[\frac{1}{\Gm_{ii}(z)}\bigg]\, \bigg\|
      \prec
      \sqrt{\frac{1}{N\eta}}
      .
    \end{equation*}
    After taking the average over $i\in \llbr N \rrbr$, rearranging the terms in \eqref{identity}  and using the entry-wise local law from \eqref{eq:locallaw}, (\textbf{M1}), boundedness of $\SuOp$ and 
     the formula
    \begin{equation*}
      \Gm_{jj}(z)- P_{jj}\big(\GRes^{\{i\}}(z)\big) 
      =
      \Gm_{ji}(z)\,\frac{1}{\Gm_{ii}(z)}\,\Gm_{ij}(z)
      ,\quad
       j\neq i 
      ,
    \end{equation*}
    we obtain
    \begin{equation}\label{eq:AvLL1}
      \GStOp \,\bigg[\frac{1}{N}\sum_{i=1}^{N}\Gm_{ii}(z)-\GSol(z) \bigg]        
      =
          - \GSol(z) \, \frac{1}{N}\sum_{i=1}^{N}\Qcc_{i}\,\bigg[\frac{1}{\Gm_{ii}(z)}\bigg] \,\GSol(z) +O_{ \prec }\bigg(\frac{1}{N}+\frac{1}{N\eta}\bigg)
        ,
    \end{equation}
    where $O_{\prec}(N^{-1}+(N\eta)^{-1})$ collects terms stochastically dominated by $N^{-1} + (N\eta)^{-1}$ .
    Applying again the entry-wise local law \eqref{eq:locallaw}  and \eqref{eq:MregB}, we have that uniformly on $E\in B_{\epsB}$ and $N^{-1+\gamma} < \eta \leq 1$
    \begin{equation}\label{eq:FlAvCond}
      \max_{i,j\in \llbr N \rrbr}\bigg\| \frac{1}{\GSol(z)}\Gm_{ij}(z)-\delta_{ij}  I_m  \bigg\|
      \prec
      \sqrt{\frac{1}{N\eta}}
      \leq
      N^{-\gamma}
      .
    \end{equation}
    Inequality \eqref{eq:FlAvCond} allows us to improve a bound on the first term on the RHS of \eqref{eq:AvLL1} by using the fluctuation averaging (see \cite[Proposition~4.6]{AltErdoKrugNemi_Kronecker}), which gives that for $E\in B_{\epsB}$ and $N^{-1+\gamma} < \eta \leq 1$
    \begin{equation*}
      \frac{1}{N}\sum_{i=1}^{N}\Qcc_{i}\,\bigg[\frac{1}{\Gm_{ii}(z)}\bigg] \, \GSol(z) 
      \prec
      \frac{1}{N\eta}
      .
    \end{equation*}
    Now the boundedness of $\GStOp^{-1}$ from (\textbf{M2}) yields the second inequality in \eqref{eq:locallaw}.
    This completes the proof of Theorem~\ref{thm:main}.
\end{proof}

Finally we prove the speed of convergence in the global law:

\begin{proof}[Proof of Proposition~\ref{pr:speed}] First we prove \eqref{fp} for bounded test functions. In this case, by  the Helffer-Sj\"ostrand formula
(see, e.g.  Section 11.2 of \cite{ErdoYauBook}) it is sufficient to prove \eqref{fp} for all functions of the 
form $f(x)= (x-z)^{-1}$ with any fixed $z\in \CC_+$. Consider any nilpotent linearization of $p$ and let $M(z)$ be
the solution of DEL \eqref{eq:MDE}.  Notice that 
\begin{equation}\label{MM}
 \| \GSol(z)\| \le C\,\bigg( 1+ \frac{1}{\Im z}\bigg), \qquad  \| \GStOp^{-1}\| \le C\,(1+|z|)^2\,\bigg( 1+ \frac{1}{\Im z}\bigg)^2,
\end{equation}
where the first bound was obtained in Lemma~\ref{lem:MDEexistence} (i).  The second bound is a consequence of
 the identity
\begin{equation}\label{eq:identity}
\GStOp^{-1}[R] =(\id\otimes \tau)\, \bigg( \frac{1}{\Lb-zJ\otimes \freeunit}\,\big(M^{-1}RM^{-1} \otimes \freeunit \big)\,\frac{1}{\Lb-zJ\otimes \freeunit}\bigg),
\end{equation}
the a priori bound $\| \GSol^{-1}\|\le C(1+|z|)$, see \eqref{eq:MinvRegB1}, and  the bound
\eqref{eq:trivResBound} applied to semicircular elements as in  \eqref{scbound}.
The identity \eqref{eq:identity} follows immediately by expressing the derivative of the function 
$$
 \Phi(A):= (\id\otimes \tau) (\Lb - (zJ-A)\otimes \freeunit )^{-1}, 
$$ 
at $A=0$ 
in two different ways. The derivative on the free probability level gives the right hand side of \eqref{eq:identity}, while
the derivative on  the level of
the Dyson equation \eqref{eq:MDE} perturbed with $A$ gives the left hand side, see \eqref{Mderiv} for a similar calculation.

Thus \eqref{MM} shows that  the analogues of the conditions \textup{(\textbf{M1})-(\textbf{M2})} hold
 away from the real axis, i.e. on any compact set $\{ z\in \CC\; : \; |z|\le C^*, \Im z\ge c^*\}$
with fixed positive thresholds $c^*, C^*$. Inspecting the proof of the local law in Section~\ref{sec:locallaw}, we see
that the entire argument goes through under  these modified assumptions. We leave the details to the reader.

Finally, using the boundedness of all  $\Xb$ and $\Yb$ matrices with very high probability, a standard cutoff argument
yields \eqref{fp} for arbitrary smooth function.
\end{proof}

\section{Examples}
\label{sec:examples}
In this section we prove optimal bulk local law (in the sense of Theorem~\ref{thm:mainPoly}) for two concrete families of polynomials of random matrices, namely, for the eigenvalues of  quadratic forms in Wigner matrices and 
for eigenvalues of symmetrized products (i.e. singular values of products) of matrices with \emph{i.i.d.} entries.

\subsection{Local law for homogeneous polynomials of degree two in Wigner matrices}
\label{sec:example}
   Consider a family of noncommutative self-adjoint polynomials of degree 2 in $\gamma_*\geq 2$ variables given by
   \begin{equation*}
     \tilde{q}(x_1,\ldots,x_{\gamma_*}) =
     \xb^{t} \,\Xi \, \xb
     ,
   \end{equation*}
   where $\xb = (x_1,\ldots, x_{\gamma_*})^{t}$ and $\Xi$ is a Hermitian $\gamma_*\times \gamma_*$ matrix.
   We will assume that $\Xi$ is invertible.
   Note that if we take $\Xi = \left(
     \begin{array}{cc}
       0 & 1
       \\
       1 & 0
     \end{array}
\right)$, then $\tilde{q}(x_1, x_2) = x_1 x_2 + x_2 x_1$, the anticommutator, which was studied by Anderson in \cite{Ande15}.
   
   Suppose that $\Lb$ is the minimal linearization of $\cstarunit-\tilde{q}(\xb)$ and suppose that for any $\epsB>0$ the assumptions (\textbf{M1}) and (\textbf{M2}) hold for $\Lb$ and the corresponding solution of the \emph{DEL} everywhere in the $\epsB$-bulk.
   This, together with Theorem~\ref{thm:mainPoly}, would imply that the optimal local law holds for the polynomial $\cstarunit-\tilde{q}$ everywhere in the $\epsB$-bulk.
   Therefore, in order to prove the local law it is enough to find a minimal linearization of $\cstarunit-\tilde{q}$ that satisfies (\textbf{M1}) and (\textbf{M2}).

   Before proceeding to the proof, we fix a specific linearization of the polynomial $\cstarunit-\tilde{q}$, which is particularly suitable for our computations.
   More precisely, let $\Lb(\xb) = K_0 - \sum_{\gamma = 1}^{\gamma_*} K_{\gamma} x_{\gamma}$ with
\begin{equation}\label{eq:ExLin}
  K_0
  =
  \left(
    \begin{array}{c|ccc}
      1 & 0 & \cdots & 0
      \\ \hline
      0 & & &
      \\
      \vdots & &\Xi^{-1} &
      \\
      0 & & &
    \end{array}
  \right)
  ,\quad
  K_{\gamma}
  =
  \left(
    \begin{array}{c|ccc}
      0 & & \hat{e}_{\gamma}^{t} &
      \\ \hline
        & 0 & \cdots & 0
      \\
       \hat{e}_{\gamma} & \vdots& & \vdots
      \\
       & 0 & \cdots & 0
    \end{array}
  \right)
  ,
\end{equation}
where $\{\hat{e}_{\gamma}\, :  1\leq \gamma\leq \gamma_{*}\}$ denotes the canonical basis of $\CC^{\gamma_*}$. In particular the operator $\Gamma$ is given by the formula
\begin{equation} \label{Gamma for deg 2 polynomial}
\Gamma 
\left(
\begin{array}{c|c}
\alpha& a^t
\\\hline
b & A
\end{array}
\right) = 
\left(
\begin{array}{c|c}
\tr A& b^t
\\\hline
a & \alpha I_{\gamma_*}
\end{array}
\right) \,,
\end{equation}
for $\alpha \in \CC$, $a,b \in \CC^{\gamma_*}$ and $A \in \CC^{\gamma_* \times \gamma_*}$.  
One can easily see that $\Lb(\xb)$ gives a linearization of $\cstarunit-\tilde{q}$.
Moreover, this linearization is in fact minimal.
To show this latter property of \eqref{eq:ExLin}, note that the matrix representation of the series $(1-\tilde{q})^{-1}$ corresponding to the linearization \eqref{eq:ExLin} (see Remark~\ref{rem:aAdvantage}) is given by $(K_0^{-1} e_1, e_1, K_1 K_0^{-1}, \ldots, K_{\gamma_*} K_0^{-1})$.
From the special structure of $K_0^{-1}$ and $K_{\gamma}$ we see that for 
\begin{equation*}
  K_{\gamma} K_0^{-1} e_1
  =
  \begin{pmatrix}
    0
    \\
  \hat{e}_{\gamma}  
\end{pmatrix}
,\quad
1\leq \gamma \leq \gamma_*
  .  
\end{equation*}
Therefore, with $\{e_{\gamma}\, : 1\leq \gamma \leq \gamma_{*}+1\}$ being the canonical basis of $\CC^{\gamma_*+1}$, we have 
\begin{equation*}
  \mathrm{span}\{e_1, K_{1} K_0^{-1} e_1, \ldots, K_{\gamma_*} K_0^{-1} e_1\}
  =
  \mathrm{span}\{e_1, e_2, \ldots, e_{\gamma_*+1}\}
  =
  \CC^{\gamma_*+1}
  ,
\end{equation*}
which corresponds to condition \eqref{eq:aMinCrit1} in Proposition~\ref{prop:min}.
Similarly, one can show that the condition \eqref{eq:aMinCrit2} is satisfied as well, i.e.,
\begin{equation*}
  \mathrm{span}\{K_0^{-1}e_1, K_0^{-1}K_{1} K_0^{-1} e_1, \ldots, K_0^{-1}K_{\gamma_*} K_0^{-1} e_1\}
  =
  \CC^{\gamma_*+1}
  .
\end{equation*}
We then conclude using Proposition~\ref{prop:min} that the linearization \eqref{eq:ExLin} is indeed minimal.

Below we show that for this choice of linearization conditions (\textbf{M1}) and (\textbf{M2}) hold everywhere in the $\epsB$-bulk.

\subsubsection{Boundedness of $\GSol$ (assumption (\textbf{M1}))}

First, we realize that the solution $\GSol(z)$ of the corresponding \emph{DEL} has the following structure
\begin{equation} \label{eq:Ex1}
  \GSol(z)
  =
  \left(
    \begin{array}{c|ccc}
      \GSol_{11}(z) & 0 & \cdots & 0
      \\ \hline
      0 & & &
      \\
      \vdots & & \widehat{\GSol}(z) &
      \\
      0 & & &
    \end{array}
  \right)
  ,
\end{equation}
where by $\widehat{\GSol}(z)$ we denote the $\gamma_*\times \gamma_*$ submatrix of $\GSol(z)$.
Indeed, by \eqref{eq:ExLin} and \eqref{Gamma for deg 2 polynomial} the right hand side of \eqref{eq:MregA} as well as taking the
 inverse preserves the claimed block structure. 
 Since the solution is obtained via a fixed point argument from an arbitrary starting point 
 by iterating these operations  and then taking the limit $\omega \to 0$, it takes the form \eqref{eq:Ex1}.

Now, for $\GSol = \GSol(z)$ of the form \eqref{eq:Ex1} we write the \emph{DEL} 
\begin{equation*}
  I + (zJ-K_0) \GSol + \sum_{\gamma=1}^{\gamma_*} K_{\gamma}\, \GSol \, K_{\gamma} \, \GSol 
  =
  0
  ,
\end{equation*}
which can be split into two parts
\begin{align} \label{eq:ExMDE1}
   1 + (z-1) \GSol_{11} + \GSol_{11} \Tr\widehat{\GSol}
    &=
    0
    ,
  \\ 
   I - \Xi^{-1} \widehat{\GSol} + \GSol_{11} \widehat{\GSol}
    &=
    0
    . \nonumber
\end{align}
From \eqref{eq:ExMDE1} we obtain that
\begin{equation} \label{eq:ExMDE3}
  \widehat{\GSol}
  =
  (\Xi^{-1} - \GSol_{11}I)^{-1}
  .
\end{equation}
Recall, that by the definition of the $\epsB$-bulk there exists $\eta_0>0$ small enough such that for all $\eta \in [0,\eta_0]$ and   $E \in B_{\epsB}$ we have $\Im \GSol_{11}(E+\imunit \eta) \geq \epsB/2$.
Therefore, since $\Xi$ is self-adjoint, \eqref{eq:ExMDE3} implies that $\| \widehat{\GSol}\| \leq 2/\epsB$ for all $E\in B_{\epsB}$ and $\eta \in [0, \eta_0]$.
Moreover, by plugging \eqref{eq:ExMDE3} into \eqref{eq:ExMDE1} we derive an equation for $\GSol_{11}$
\begin{equation*}
   1 + (z-1) \GSol_{11} + \sum_{\gamma=1}^{\gamma_*} \frac{\GSol_{11}}{\xi_{\gamma}^{-1}-\GSol_{11}}
    =
    0
    ,
  \end{equation*}
  where by $\xi_{\gamma} \in \RR$, $1\leq \gamma \leq \gamma_*$, we denoted the  eigenvalues of $\Xi$.
  Note that if there exists an unbounded solution $m$ of the equation 
  \begin{equation*}
    1 + (z-1) m + \sum_{\gamma=1}^{\gamma_*} \frac{m}{\xi_{\gamma}^{-1}-m}
    =
    0
    ,
  \end{equation*}
  then $m(z)$ can be unbounded only near the point $z=1$.
  In this case 
  \begin{equation*}
    \lim_{z\rightarrow 1} (z-1)m(z)
    =
    \gamma_*-1
    ,
  \end{equation*}
  which implies that $\Im m(z) < 0$ in the neighborhood of $z=1$.
  Therefore, function $\GSol_{11}(z)$, whose imaginary part by Lemma~\ref{lem:MDEexistence} (iii) must be nonnegative on $\CC_+$, has absolute value bounded by some $C>0$ for all $z\in \CC_+$.
  We conclude that the assumption (\textbf{M1}) holds for the linearization \eqref{eq:ExLin} of the  polynomial $1-\tilde{q}(\xb)$ with a constant $C_3=\max\{C,2/\epsB \}$ depending only on the model parameters $\epsB$ and $\Xi$.

  \subsubsection{Boundedness of $\GStOp^{-1}$ (assumption (\textbf{M2}))}

  In order to prove the stability assumption (\textbf{M2}), we will have to extract additional information from the \emph{DEL} \eqref{eq:MDE} by taking its imaginary part at $\eta = 0$
  \begin{equation}\label{eq:ExImMDE}
    \Im \GSol
    =
    \GSol^* \sum_{\gamma = 1}^{\gamma_*} K_{\gamma} \Im \GSol K_{\gamma } \GSol
    .
  \end{equation}
  By using \eqref{eq:ExLin}, \eqref{eq:Ex1} and \eqref{eq:ExMDE3} and comparing the $(1,1)$-components of both sides of \eqref{eq:ExImMDE}, we obtain that for all real $z=E$, $E\in B_{\epsB}$,
  \begin{equation} \label{eq:ExImMDE3}
    |\GSol_{11}|^2 \Tr \widehat{M}^* \widehat{M}
    =
    1
    .
  \end{equation}

  Now, consider the space of $(\gamma_*+1)\times (\gamma_*+1)$ matrices with basis vectors $\{E_{11},E_{12},\ldots\}$, on which the linear operator $\GStOp$ is acting, as $\CC^{(\gamma_*+1)^2}$ with the standard basis $\{e_1,e_2,\ldots\}$.
  On this latter space $\GStOp$ can be represented by the matrix
\begin{equation*}
  \Ab_{\GStOp}
  :=
  I_{(\gamma_*+1)^2} -  \sum_{\gamma=1}^{\gamma_*} \GSol K_{\gamma}  \otimes \GSol^{t}  K_{\gamma}  
  ,
\end{equation*}
or more explicitly
\begin{equation*}
  \Ab_{\GStOp}
  =
  \begin{pmatrix}
    I_{\gamma_*+1} & -\Ab_{12} 
    \\
    -\Ab_{21} & I_{(\gamma_*+1)\gamma_*} 
  \end{pmatrix}
  ,
\end{equation*}
where
\begin{equation*}
  \Ab_{12}
  =
  \GSol_{11} \sum_{\gamma=1}^{\gamma^*} e_{\gamma}^{t} \otimes \GSol^{t} K_{\gamma} 
  ,
  \quad
  \Ab_{21}
  =
  \sum_{\gamma=1}^{\gamma_*} \widehat{\GSol} e_{\gamma}  \otimes \GSol^{t} K_{\gamma} 
  .
\end{equation*}
Note, that
\begin{equation*}
  \det(\Ab_{\GStOp})
  =
  \det(I_{\gamma_*+1} -\Ab_{12} \Ab_{21} )
  ,
\end{equation*}
therefore, in order to prove invertibility of $\GStOp$ it will be enough to show invertibility of
\begin{equation}\label{eq:exampleSmallMatrix}
  I_{\gamma_*+1} -\Ab_{12} \Ab_{21}
  =
  \left(
    \begin{array}{c|ccc}
      1-\GSol_{11}^{2} \Tr(\widehat{\GSol})^2 
      & 0& \cdots& 0
      \\ \hline
      0 & & &
      \\
      \vdots & & I_{\gamma_*} - \GSol_{11}^{2}  (\widehat{\GSol}^{t}\widehat{\GSol}) &
      \\
      0 & & &
    \end{array}
  \right)
  .
\end{equation}

Assume that the upper-left entry is not invertible, i.e.,
\begin{equation*}
  1-\GSol_{11}^{2} \Tr (\widehat{\GSol})^2 
  =
  1-\GSol_{11}^{2} \sum_{\gamma =1}^{\gamma_*} \frac{1}{(\xi_{\gamma}^{-1}-M_{11})^{2}} 
  =
  0
  ,
\end{equation*}
where we used \eqref{eq:ExMDE3}.
In this case, from \eqref{eq:ExImMDE3}, we obtain that for all $1\leq \gamma \leq \gamma_*$
\begin{equation*}
   (\GSol_{11}(\xi_{\gamma}^{-1} - \GSol_{11})^{-1})^2
  =
  |\GSol_{11}(\xi_{\gamma}^{-1} - \GSol_{11})^{-1}|^2
  ,
\end{equation*}
so that $\Im (\GSol_{11}(\xi_{\gamma}^{-1} - \GSol_{11})^{-1}) = \Im (\GSol_{11})\xi_{\gamma}^{-1}|\xi_{\gamma}^{-1} - \GSol_{11}|^{-2} = 0$ for all $1\leq \gamma \leq \gamma_*$, which leads to a contradiction with $\Im \GSol_{11} > \epsB$ for $z=E$, $E\in B_{\epsB}$.

Consider now the case when $ \det (I_{\gamma_*} - \GSol_{11}^{2}  (\widehat{\GSol}^{t}\widehat{\GSol}))=0$, so that the lower-right submatrix of \eqref{eq:exampleSmallMatrix} is singular.
This implies that there exists $\omega \in \CC^{\gamma_*}$, $\|\omega \| = 1$, such that
\begin{equation}
  \label{eq:ExStab1}
  \GSol_{11}^{2}  \omega^* \widehat{\GSol}^{t}\widehat{\GSol} \omega
  =
  1
  .
\end{equation}
We can rewrite the LHS of \eqref{eq:ExStab1} as
\begin{equation*}
  \sum_{k=1}^{\gamma_*} \la \omega, \vb_k \ra \la \overline{\omega}, \vb_k \ra
  =
  \sum_{k=1}^{\gamma_*} (\la \Re \omega, \vb_k \ra )^2 + (\la \Im \omega, \vb_k \ra)^2
  ,
\end{equation*}
where
\begin{equation*}
  \vb_k
  :=
  (\GSol_{11} \widehat{\GSol}_{k1},\GSol_{11} \widehat{\GSol}_{k2}, \ldots, \GSol_{11} \widehat{\GSol}_{k\gamma_*})^{t}
  .
\end{equation*}
Due to \eqref{eq:ExImMDE3}, using triangular and Cauchy-Schwarz inequalities, we have
\begin{equation}\label{eq:ExStab3}
  \Big|\sum_{k=1}^{\gamma_*} (\la \Re \omega, \vb_k \ra )^2 + (\la \Im \omega, \vb_k \ra)^2 \Big|
  \leq
  \|\omega \|^2 \sum_{k=1}^{\gamma_*}\|\vb_k\|^2
  =
  |\GSol_{11}|^2 \Tr \widehat{\GSol}^* \widehat{\GSol}
  =
  1
  .
\end{equation}
Assumption \eqref{eq:ExStab1} implies that the first inequality in \eqref{eq:ExStab3} is in fact an equality and that 
\begin{equation*}
  |\la \Re \omega, \vb_k \ra|^2
  =
  \|\Re \omega \|^2 \| \vb_k \|^2
  ,\quad
   |\la \Im \omega, \vb_k \ra|^2
  =
  \|\Im \omega \|^2 \| \vb_k \|^2
  ,\quad
  1\leq k \leq \gamma_*
  .
\end{equation*}
 Thus there exist $c_1^{(1)},\ldots,c_{\gamma_*}^{(1)},c_1^{(2)},\ldots,c_{\gamma_*}^{(2)} \in \CC$ such that
\begin{equation*}
  \vb_k
  =
  c_k^{(1)} \Re \omega
  =
  c_k^{(2)} \Im \omega
  ,
\end{equation*}
and we see that the rows of the  matrix $\GSol_{11} \widehat{\GSol}$ are linearly dependent.
At the same time we know that since $\Im \GSol_{11}>\epsB$ in the $\epsB$-bulk, by \eqref{eq:ExMDE3}  matrix $\widehat{\GSol}$ must be invertible.
From the obtained contradiction we conclude that $I_{\gamma_*+1} - \Ab_{12} \Ab_{21}$, $\Ab_{\GStOp}$ and $\GStOp$ are all invertible for $z=E$ with $E\in B_{\epsB}$, so that there exists $C>0$ depending only on $\epsB$ and $\Xi$, such that $\| \GStOp^{-1}(E) \| \leq C$  for all $E\in B_{\epsB}$.
Now a simple continuity argument, together with the a priori bound from Lemma~\ref{lem:MDEexistence}, shows that the condition (\textbf{M2}) holds for the model given by \eqref{eq:ExLin} everywhere in the $\epsB$-bulk.

\subsection{Local law for singular values of a product of independent non-Hermitian matrices}
\label{sec:exampl-sing-valu}
Consider $q(y_1,\ldots, y_{\beta_*}, y_1^*, \ldots, y_{\beta_*}^*) = y_1 \cdots y_{\beta_*} y_{\beta_*}^* \cdots y_1^*$.
Then a minimal linearization of the polynomial $\cstarunit - q(\yb,\yb^*)$ is given by
\begin{equation}
  \label{eq:7}
  \Lb
  =
  \left(
    \begin{array}{c|ccccccc}
      1&&&&&&&y_1
      \\ \hline
       &&&&&& y_2 &1
      \\
       &&&&&\iddots&\iddots&
      \\
       &&&&y_{\beta_*}&1&&
      \\ 
       &&&y_{\beta_*}^*&1&&&
      \\
       &&\iddots&1&&&&
      \\
       & y_2^* &\iddots&&&&&
      \\
      y_1^*&1&&&&&&
    \end{array}
  \right) = K_0 \otimes \freeunit+ \Yb
  ,
\end{equation}
or, using the representation \eqref{eq:linearization} and the basis vectors $E_{ij}=e_i e_j^t$, by a set of matrices
\begin{equation}
  \label{eq:6}
  K_0
  =
  E_{1,1} + \sum_{j=2}^{2\beta_*} E_{j,2\beta_*+2-j}
  ,
  \qquad
  L_{\beta}
  =
  E_{\beta, 2\beta_*+1-\beta}
  ,\quad
  1\leq \beta \leq \beta_*
  .
\end{equation}
Here, the corresponding operator $\Gamma$ has the simple form
\begin{equation}\label{Gamma for product}
\Gamma [R] = \mathrm{diag}(r_{2\beta_*+1-i , 2\beta_*+1-i})_{i=1}^{2\beta_*}\,, \qquad R =(r_{i,j})_{i,j=1}^{2\beta_*}\,,
\end{equation}
where $\mathrm{diag}(a) \in \CC^{2\beta_* \times 2\beta_*}$ is the diagonal matrix with vector $a \in \CC^{2\beta_*}$ along its diagonal. 

Before  proving that assumptions (\textbf{M1}) and (\textbf{M2}) hold for this  model, we show first that the solution matrix $\GSol(z)$ has the following structure 
\begin{equation}
  \label{eq:9}
  \GSol (z)
  =
  \sum_{j=1}^{2\beta_*} m_j(z) E_{jj} + \sum_{j=2}^{2\beta_*} m_{\beta_*+1}(z) E_{j,2\beta_*+2-j} -m_{\beta_*+1}(z) E_{\beta_*+1,\beta_*+1}
  ,
\end{equation}
for some $m_j : \CC_+\rightarrow \CC$, $1\leq j \leq 2\beta_*$.  
In order to do so we introduce an auxiliary parameter $\alpha>0$ and consider the linearization $\Lb_\alpha = K_0  \otimes \freeunit + \alpha \Yb$ of the polynomial 
$\cstarunit - \alpha^{2\beta_*}q(\yb,\yb^*)$ with $\Gamma_{\alpha}= \alpha^2 \Gamma$. We use the representation \eqref{eq:41} of the corresponding solution $M_\alpha$ to the  DEL with
$\Gamma_{\alpha}$ and expand 
into geometric series for any fixed $z$ and sufficiently small $\alpha$ as   
\[
M_\alpha(z)= (\id\otimes \tau)\Bigg[\Big(\big(K_0-zJ\big)^{-1}\otimes \freeunit\Big)\sum_{k=0}^\infty (-\alpha)^k\Xb^k\Bigg], \qquad \Xb:=\widehat{\Yb}\,(K_0-zJ\big)^{-1}\otimes \freeunit\,,
 \]
 where  $\widehat{\Yb}$ is defined as 
 all the $y_i$ inside $\Yb$ are replaced by free circular elements $\widehat{y}_i$. Due to its cyclic structure the only powers of $\Xb$ with non-zero elements on the diagonal are integer multiples of $2 \beta_*$ and since $ \Xb^{2\beta_*}$ has $\zeta^{-1}q(\widehat{\yb},\widehat{\yb}^*)$ with $\zeta:=1-z$ in all diagonal entries we conclude that $(K_0-zJ)^{-1}M_\alpha(z)$ has a constant diagonal. Furthermore, the subalgebra of $\CC^{2 \beta_* \times 2 \beta_*}$ of all matrices with the same non-zero entries as $M(z)$ from \eqref{eq:9} is left invariant by matrix inversion, addition of  $K_0 -z J$ (cf. \eqref{eq:6}) as well as application of the operator $\Gamma_\alpha$. Thus similarly to the argument we used in Section~\ref{sec:example} the solution $M_\alpha$ also has only these non-zero entries and thus 
 is of the form \eqref{eq:9} for small enough $\alpha$. Since $M_\alpha$ is analytic in $\alpha>0$ for every $z \in \CC_+$, we conclude that \eqref{eq:9} also holds at $\alpha=1$.

       \subsubsection{Boundedness of $\GSol$ (assumption (\textbf{M1}))}
       We now prove that $\GSol$ is bounded everywhere in the $\epsB$-bulk.
       Using the structure of $\GSol$ \eqref{eq:9}, the \emph{DEL} \eqref{eq:MDE} can be reduced to the following system of equations for $m_{\beta}, 1\leq \beta\leq 2\beta_*$,
          \begin{equation*}
     \left\{
       \begin{array}{ll}
         1-\zeta m_1 + m_{2\beta_*} m_1 = 0, &
         \\
         1 - m_{\beta_*+1} + m_{2\beta_*+1-\beta} m_{\beta} = 0, & 2\leq \beta \leq 2\beta_*,
         \\
         - m_{2\beta_*+2-\beta} + m_{2\beta_*+1-\beta} m_{\beta+1} = 0, & 2\leq \beta \leq 2\beta_*, i \neq \beta_*+1.
       \end{array}
       \right.
     \end{equation*}
     From these equations we obtain that all $m_{\beta}$, $\beta\geq 2$, can be expressed in terms of $m_1$:
     \begin{equation} \label{eq:3}
       \begin{array}{ll}
       m_{\beta_*+1}
       =
         \zeta m_1
         ,&
       \\
       m_{\beta_*+1+\beta}
         =
          m_{\beta_*+1}^{\beta+1}
         =
          (\zeta m_1)^{\beta+1}
         ,&
         0\leq \beta \leq \beta_*-1
         ,
       \\
       m_{\beta}
       =
       m_1 m_{\beta_*+1}^{\beta-1}
       =
        \zeta^{\beta-1}m_1^{\beta}
       ,&
       2\leq \beta \leq \beta_*
       ,
       \end{array}
     \end{equation}
     and  $m_1(z)=\la e_1, M(z) e_1 \ra$ satisfies the following polynomial equation
     \begin{equation}
       \label{eq:11}
       1 - \zeta m_1 + \zeta^{\beta_*} m_1^{\beta_*+1}
       =
       0
       ,\qquad
       \zeta
       =
       1-z.
     \end{equation}
     From \eqref{eq:11} it is easy to see that $|m_1(z)|$ can be unbounded only in the neighborhood of $z=1$.
     Moreover, we will show that there exists $c(\epsB)= c(\epsB,\beta_*)>0$ small enough such that
     \begin{equation}
       \label{eq:2}
       B_{\epsB} \subset [1-C(\beta_*), 1-c(\epsB,\beta_*)]
       ,
     \end{equation}
     where $C(\beta_*)\geq 2$  comes from the boundedness of the support of the density of states.
     In order to prove the upper bound in \eqref{eq:2} we may, without loss of generality, consider only $z\in \CC_+$  with $|\zeta|=|1-z|\leq 4^{-(\beta_*+1)}$ and $\eta=\Im z$ small.
     We will show that for such $z$ the condition  $E=\Re z \in B_{\epsB}$ implies $|\zeta|\geq c(\epsB)$, where $c(\epsB)$ will be specified below.
     Rewrite \eqref{eq:11} as
     \begin{equation}
       \label{eq:14}
       (\zeta m_1)^{\beta_*+1}
       =
       -\zeta (1-\zeta m_1)
       ,
     \end{equation}
     from which it follows that $ |\zeta m_1|^{\beta_*+1} \leq |\zeta|(1+|\zeta m_1|)$, so that $|\zeta m_1| \leq 2|\zeta|^{\frac{1}{\beta_*+1}}$.
     This last bound implies that $|\zeta m_1|< 1/2$, which, together with \eqref{eq:14}, yields
     \begin{equation}
       \label{eq:15}
       |m_1|
       \sim
       |\zeta|^{-\frac{\beta_*}{\beta_*+1}}
       .
     \end{equation}
     Suppose that $|\zeta|^{-\frac{\beta_*}{\beta_*+1}} \geq C'\epsB^{-1}$ with some large constant $C'$.
     For $E$ in the $\epsB$-bulk and $\eta$ small we have $ \Im m_1 \leq 2 \epsB^{-1}$, which together with \eqref{eq:15} gives $ |\Re m_1| \sim |\zeta|^{-\frac{\beta_*}{\beta_*+1}}$.
     In the regime when $\eta \ll |1-E|$ and $|\zeta| \sim |1-E|$, by taking the imaginary part of the equation \eqref{eq:11} and dividing it through by $\Im m_1  (\Re m_1)^{-1}$ we obtain
     \begin{equation}
       \label{eq:17}
       0
       =
       -(1-E)\Re m_1 + (\beta_*+1)(1-E)^{\beta_*}(\Re m_1)^{\beta_*+1} + O\,\bigg(\bigg| \frac{\Im m_1}{\Re m_1}\bigg| + \frac{\eta}{|\zeta|^2}\bigg| \frac{\Re m_1}{\Im m_1}\bigg| \bigg)
       .
     \end{equation}
     Choosing $C'$ sufficiently large and $\eta$ sufficiently small (depending on $\epsB$) the error term becomes negligible, using $\Im m_1\geq \epsB$ since we are in $B_{\epsB}$.
     Using the scaling of $1-E$ and $\Re m_1$ in $\zeta$, we obtain $|(1-E) \Re m_1| \sim |\zeta|^{\frac{1}{\beta_*+1}}$ and  $|(\beta_*+1)(1-E)^{\beta_*}(\Re m_1)^{\beta_*+1}|\sim 1$.
     Since $\zeta$ is small, this leads to a contradiction in \eqref{eq:17}, hence  $|\zeta|^{-\frac{\beta_*}{\beta_*+1}} \geq C'\epsB^{-1}$ cannot hold.
     This finishes the proof of \eqref{eq:2} with $c(\epsB)=(\epsB/C')^{\frac{\beta_*+1}{\beta_*}} $.
     Now, for any $E\in B_{\epsB}$ and $\eta>0$ small enough, \eqref{eq:11} and \eqref{eq:2} imply that $ |m_1| \leq (c(\epsB))^{-1}C$,  which gives an effective bound on $m_1$.
     Boundedness of $|m_1|$ together with \eqref{eq:3} implies that assumption (\textbf{M1}) holds everywhere in  $B_{\epsB}$ with $C_3 = c(\epsB)^{3\beta_*}$.
     
     \subsubsection{Boundedness of $\GStOp^{-1}$ (assumption (\textbf{M2}))}
     In this section we show that assumption (\textbf{M2}) holds everywhere in the $\epsB$-bulk, which together with Theorem~\ref{thm:mainPoly} implies optimal bulk local law for the singular values of a product of matrices $Y_1\cdots Y_{\beta_*}$ satisfying \textup{(\textbf{H1})-(\textbf{H4})}.

     By \eqref{eq:6}, which, in particular, gives that  $\GSol^{t}(z)=\GSol(z)$, and \eqref{eq:9}, matrix $\Ab_{\GStOp} =I- \sum_{\beta=1}^{\beta_*} (\GSol L_{\beta} \otimes \GSol L_{\beta}  + \GSol L_{\beta}^{t}  \otimes \GSol L_{\beta}^{t})$ representing the stability operator $\GStOp$ in the standard basis of $\CC^{(2\beta_*)^2}$ can be written as
          \begin{align*}
            \Ab_{\mathscr{L}}
              =
              I_{(2 \beta_*)^2 }  & - (m_1 E_{1,2\beta_*})^{\otimes 2}
              - \sum_{\beta=2}^{\beta_*} (m_\beta E_{\beta,2\beta_*+1-\beta} + m_{\beta_*+1} E_{2\beta_*+2-\beta,2\beta_*+1-\beta})^{\otimes 2} 
            \\
            &
             - \sum_{\beta=1}^{\beta_*-1}  (m_{2\beta_*+1-\beta} E_{2\beta_*+1-\beta,\beta} + m_{\beta_*+1} E_{\beta+1,\beta})^{\otimes 2}
              - (m_{\beta_*+1} E_{\beta_*+1,\beta_*})^{\otimes 2}      
              ,
          \end{align*}
          where for any $k\in \NN$ and  $R \in \CC^{k\times k}$ we denote $R^{\otimes 2}:= R\otimes R$.
          After removing from $\Ab_{\GStOp}$ rows and columns for all indices such that either row or column of the corresponding index has only one non-zero entry equal to $1$, we obtain that $\det(\Ab_{\mathscr{L}}) = \det(\widehat{\Ab}_{\mathscr{L}})$,
       where
       \begin{equation*}
         \widehat{\Ab}_{\mathscr{L}}
         =
         -I_{2\beta_*} + \sum_{\beta=1}^{2\beta_*} m_{\beta}^{2} E_{\beta,2\beta_*+1-\beta} + m_{\beta_*+1}^2 \sum_{\beta=1}^{\beta_*-1}(E_{\beta+1, \beta } + E_{\beta_*+\beta+1, \beta_*+\beta}) 
         .
       \end{equation*}
              Divide $\widehat{\Ab}_{\mathscr{L}}$ into four blocks of equal size $\Ab_{ij}, 1\leq i,j\leq 2$, so that, e.g., $\Ab_{11}$ denotes the upper-left $\beta_*\times \beta_*$ submatrix of $\Ab$.
       Then by the lower-triangular structure of $\Ab_{11}$ and $\Ab_{22}$ having $-I_{\beta_*}$ on their diagonals, skew-diagonal shape of $\Ab_{12}$ and $\Ab_{21}$, and \eqref{eq:3} we have that $\det (\widehat{\Ab}_{\mathscr{L}})$ is equal to
       \begin{equation*}
         \det (\Ab_{11}-\Ab_{12} \Ab_{22}^{-1} \Ab_{21})
         =
         \det \left(
           \begin{array}{cccccc}
             \upsilon-1&\upsilon& \upsilon & \upsilon&\cdots &\upsilon
             \\
             \omega&\upsilon-1&\upsilon&\upsilon &\cdots & \upsilon
             \\
              & &\ddots& & &\vdots
             \\
                       &&\omega&\upsilon-1&\upsilon&\upsilon
                            \\
             & &  &\omega&\upsilon-1&\upsilon
             \\
              & & &&\omega &\upsilon-1
           \end{array}
         \right) 
       \end{equation*}
with $\upsilon = \zeta^{2\beta_*} m_1^{2(\beta_*+1)}$ and $\omega=(\zeta m_1)^2$.
       One can easily see that the above determinant is equal to the determinant of the following tridiagonal matrix
       \begin{equation*}
        \left(
           \begin{array}{cccccc}
             \upsilon-1&1 &&&&
             \\
             \omega &\upsilon-1-\omega& 1 &&&
             \\
             & \omega &\upsilon-1-\omega& 1&&
             \\
             &&\ddots&\ddots&\ddots&
             \\
              && & \omega &\upsilon-1- \omega&1
             \\
              && & & \omega &\upsilon-1 -\omega
           \end{array}
         \right) 
         ,
       \end{equation*}
       that is equal to
       \begin{equation}
         \label{eq:4}
         (\upsilon-1) \det(\TT_{\beta_*-1}(\upsilon-1-\omega,1,\omega )) - \omega \det(\TT_{\beta_*-2}(\upsilon-1-\omega,  1, \omega )),
       \end{equation}
       where $\TT_{k}(a,b,c)$ denotes a $k\times k$ Toeplitz tridiagonal matrix with $a$ on the main diagonal, and $b$ and $c$ above and below the main diagonal respectively.
       From \eqref{eq:11} we have
       \begin{equation}
         \label{eq:12}
         \upsilon-1-\omega
         =
         -2\zeta m_1
         ,
       \end{equation}
       and thus $(\upsilon-1-\omega)^2=4\omega $.
       Note, that under the condition $a^2=4bc$ the determinant of the Toeplitz tridiagonal matrix takes a particularly simple form
       \begin{equation*}
         \det(\TT_{k}(a,b,c ))
         =
         (k+1) \left(\frac{a}{2}\right)^k
         .
       \end{equation*}
       A simple calculation from \eqref{eq:4} and \eqref{eq:12} gives that
       \begin{equation*}
         \det( \Ab_{\mathscr{L}})
         =
         (\beta_*+1) (-\zeta m_1)^{\beta_*} + (\zeta m_1)^2 \beta_* (-\zeta m_1)^{\beta_*-1}
         .
       \end{equation*}
       Hence, $\det( \Ab_{\mathscr{L}})=0$ implies that (since $\zeta m_1 \neq 0$ in the $\epsB$-bulk)
       \begin{equation}\label{eq:5}
         \zeta m_1
         =
         \frac{\beta_*+1}{\beta_*}
         .
       \end{equation}
       Now if we plug \eqref{eq:5} into \eqref{eq:3} we obtain
       \begin{equation*}
         m_1
         =
         \Big(\frac{\beta_*}{\beta_*+1}\Big)^{\beta_*}\frac{1}{\beta_*}
         ,\qquad
         \zeta
         =
         \frac{(\beta_*+1)^{\beta_*+1}}{{\beta_*}^{\beta_*}}
         ,\qquad
         z
         =
         1-\frac{(\beta_*+1)^{\beta_*+1}}{{\beta_*}^{\beta_*}}
         .
       \end{equation*}
       Since at $z=1-(\beta_*+1)^{\beta_*+1}/{\beta_*}^{\beta_*}$ the imaginary part of $m_1(z)$ vanishes, this  point does not belong to the $\epsB$-bulk.
       This argument, which can be made effective, yields that \textup{(\textbf{M2})} holds.

 \appendix

       \section{Linearizations of noncommutative polynomials: construction and minimization}
\label{sec:aLinCM}

 Let $\Acc$ be a unital $C^*$-algebra and let  $\xb = \{x_1,\ldots,x_{\gamma_*}\}\subset \Acc$ be a family of self-adjoint noncommutative variables.
Let  $\tilde{q}=\tilde{q}(\xb)$ be a self-adjoint polynomial such that $\tilde{q}(0)  =0$.
In this section we  present two methods to construct a (self-adjoint) linearization of $1-\tilde{q}$.

\subsection{Standard algorithm for constructing a symmetric linearization}
\label{sec:abiglinearization}
We present now a simple and fairly standard  procedure for constructing a (self-adjoint) linearization.
Several versions of this algorithm have appeared in the literature, see e.g.  \cite{Ande13}, but for definiteness we present it here 
in a setup the most convenient for us. For simplicity, we will call it \emph{standard linearization}.

Suppose that for  general (even not necessarily self-adjoint) polynomials $a_1,\ldots,a_k \in \CC \la \xb \ra$ we have matrices 
\begin{equation*}
  \left(
    \begin{array}{c|c}
      0 & d_i
      \\ \hline
      b_i & U_i
    \end{array}
  \right) \in \CC \la \xb \ra^{m_i \times m_i},\quad U_i \in \CC \la \xb \ra^{(m_i-1) \times (m_i-1)},\quad 1\leq i\leq k,
\end{equation*}
such that
\begin{equation*}
  - d_i \,U_i^{-1}b_i
  =
  a_i
  ,\quad
  1\leq i\leq k
  .
\end{equation*}
We now show how to construct a linearization for a scalar multiple of $a_i$, for sums of $a_i$'s and for the real part of $a_i$. 
One can easily check that the following rules hold:
\begin{itemize}
\item[(R1)] $\left(
    \begin{array}{c|c}
      0 & d
      \\ \hline
      b & U
    \end{array}
  \right) 
  :=
  \left(
    \begin{array}{c|c}
      0	&  \sqrt{|\zeta|}d_1		
      \\ \hline
      \sqrt{|\zeta|} b_1	& \zeta^{-1} |\zeta| U_1				 		
    \end{array}
  \right)
  \,\quad \text{ gives } -d\,U^{-1}b = \zeta  \,a_1 \text{ for } \zeta \neq 0\, ,
  $
\item[(R2)] $\left(
    \begin{array}{c|c}
      0 & d
      \\ \hline
      b & U
    \end{array}
  \right) 
  :=
  \left(
    \begin{array}{c|ccc}
      0		&  d_1	& \cdots& d_k	
      \\ \hline
      b_1	& U_1	& 		& 		
      \\
      \vdots	&		&\ddots	& 		
      \\
      b_k	&		& 		&  U_k 	
    \end{array}
  \right)
  \,\quad \text{ gives } -d\,U^{-1}b = a_1+a_2+ \dots +a_k\, ,
  $
\item[(R3)] $\left(
    \begin{array}{c|c}
      0 & d
      \\ \hline
      b & U
    \end{array}
  \right) 
  :=
  \left(
    \begin{array}{c|cc}
      0	&  d_1	&  b_1^*	
      \\ \hline
      d_1^*	& 0		& U_1^*	 		
      \\
      b_1	&	U_1	& 0		 		
    \end{array}
  \right) 
  \,\quad \text{ gives } -d\,U^{-1}b = a_1+a_1^*\, .
  $
\end{itemize}
Given a self-adjoint polynomial $\tilde q(\xb)$ with $\tilde q(0)=0$, we now construct its linearization, i.e.  $\ell$ and $\widehat{\Lb}$, by using the following procedure:
\begin{enumerate}
\item With $\tilde{q}_1$ denoting the linear part of $\tilde{q}$, put $\lambda=\cstarunit - \tilde{q}_1(\xb)$.
\item  Write $\tilde{q}-\tilde{q}_1$ as the sum of monomials of type $\zeta_{\alpha_1\cdots\alpha_k} x_{\alpha_1} \cdots x_{\alpha_k}$ of degree at least two with $(\alpha_1, \ldots, \alpha_k)\in \llbr \gamma_* \rrbr^{k}$ and $\zeta_{\alpha_1\cdots\alpha_k} \in \CC$; for definiteness order the terms in the sum with respect to their degree from lowest to highest, and within each group of monomials of the same degree order them lexicographically with respect to $(\alpha_1,\ldots,\alpha_k)$.
\item   For each such  monomial construct a linearization of $x_{\alpha_1} \cdots x_{\alpha_k}$ using the basic linearization rule
  \begin{equation} \label{eq:abasicblock}
    \left(
      \begin{array}{c|c}
        0 & d
        \\ \hline
        b & U
      \end{array}
    \right) 
    :=
    \left(
      \begin{array}{c|cccc}
        0	& 0 & \cdots & 0 &  x_{\alpha_1}		
        \\ \hline
        0	&  & & x_{\alpha_2} & -\cstarunit
        \\
        \vdots& &\iddots & \iddots &0
        \\
        0 & x_{\alpha_{k-1}} & \iddots &\iddots &
        \\
        x_{\alpha_k} & -\cstarunit &0  & \cdots & 0
      \end{array}
    \right)
    \,\quad \text{ gives } -d\,U^{-1}b = x_{\alpha_1}x_{\alpha_2}\cdots x_{\alpha_k} \, .
  \end{equation}
\item Then use rule (R1) to multiply the monomials by non-zero coefficients $\zeta_{\alpha_1\cdots\alpha_k}$.
\item Next, use rule (R2) to obtain a linearization (not Hermitian at this point) of the sum of monomials.
\item  Finally, rule (R3) applied to the linearization obtained on the previous stage gives a symmetric linearization.
\end{enumerate}

Note, that if we write $\Lb$  obtained along this procedure in the form \eqref{eq:aLsum3}, then
\begin{equation}\label{eq:apermut}
  K_0
  =\left(
    \begin{array}{c|ccc}
      1 & 0 & \cdots & 0
      \\ \hline
      0 & & &
      \\
      \vdots & & \Theta \widehat{K}_0 &
      \\
      0 & & &
    \end{array}
  \right)
  ,
\end{equation}
where $\widehat{K}_0$ is a permutation matrix and $\Theta=\mathrm{Diag}(e^{i \theta_1}, \ldots, e^{i \theta_{m-1}} )$ for some $\theta_1, \ldots, \theta_{m-1} \in \RR$.

\subsection{Minimal linearization}\label{sec:minimallinearization}

The procedure described in the previous section is only one of many ways of constructing a (self-adjoint) linearization.
For example, we could repeat (R3) as many times as we wish creating  a linearization of higher dimension which would still have all the required properties.
In this section we will show how one can reduce the dimension of the linearization, which can be particularly useful if we want to check numerically that the conditions \textup{(\textbf{M1})-(\textbf{M2})} are satisfied for some particular polynomial.
Symmetric minimal linearizations have been constructed before, see \cite[Lemma~4.1 (3)]{HeltMaccVinn06}.
We present a particularly direct construction here.

\begin{defn}[Minimal linearization]
  We say that linearization $\Lb$ of a polynomial  $\cstarunit - \tilde{q}(\xb) \in \CC \la \xb \ra$ is \emph{minimal} if it has the smallest dimension among all linearizations of $\cstarunit - \tilde{q}(\xb)$.
\end{defn}
In our construction of the minimal linearization we will need the notion of a \emph{matrix representation of a series}.
\begin{defn}[Matrix representation of a series] \label{def:matRepr}
  Let $s=s(x_1,\ldots,x_{\gamma_*})$ be a formal series in noncommutative variables $x_{1},\ldots,x_{\gamma_*}$
  \begin{equation*}
    s
    =
    s_{0} + \sum_{k=1}^{\infty} \sum_{(\gamma_1,\ldots,\gamma_k)\in \llbr \gamma_* \rrbr^k} s_{\gamma_1 \ldots \gamma_k} x_{\gamma_1}\cdots x_{\gamma_k}
    .
  \end{equation*}
  Let $v_1,v_2\in \CC^{m}$ and $V_1,\ldots,V_{\gamma_*}\in\CC^{m\times m}$.
  We say that $(v_1,v_2, V_1,\ldots V_{\gamma_*})$ is a \emph{matrix representation} of $s$ if for any $k\in \NN$ and any $(\gamma_1,\ldots,\gamma_k)\in \llbr \gamma_* \rrbr^k$
  \begin{equation*}
    s_0
    =
    \la v_1,v_2 \ra
    ,\quad
    s_{\gamma_1\ldots \gamma_k}
    =
    \la v_1, V_{\gamma_1}\cdots V_{\gamma_k} v_2 \ra
    .
  \end{equation*}
  We call $m$ the \emph{dimension} of the linearization.
\end{defn}
\begin{rem}
  The concept of matrix representation of a formal series introduced in Definition~\ref{def:matRepr} is also known in the literature (see e.g. Chapter 1, Section 5 in \cite{BersReutBook}) as linear representation or linearization.
  In order to avoid confusion in this paper we will reserve the term \emph{linearization} only for objects introduced in Definition~\ref{def:saLin}.
\end{rem}
Similarly as for linearizations, we can define a minimal matrix representation of a series.
\begin{defn}[Minimal matrix representation of a series]
  Matrix representation of a series is called \emph{minimal} if it has the smallest dimension among all possible matrix representations of this series.
\end{defn}
\begin{rem}\label{rem:aAdvantage}
  The advantage of introducing the minimal matrix representation is the following.
  On one hand,  it is very easy to see that
  \begin{equation}\label{eq:aLsum4}
    \Lb = K_0\otimes \cstarunit - \sum_{\gamma=1}^{\gamma_*} K_{\gamma}\otimes x_{\gamma}
  \end{equation}
  is a linearization of $\cstarunit - \tilde{q}(\xb) \in \CC \la \xb \ra$ if and only if
  \begin{equation}\label{eq:aMatRep}
    \big(K_0^{-1}e_1, e_1, K_1 K_0^{-1}, \ldots,  K_{\gamma_*} K_0^{-1} \big)
  \end{equation}
  gives a matrix representation of $(1-\tilde{q})^{-1} := 1 + \sum_{k=1}^{\infty} \tilde{q}^{k}$.
  Indeed, by the Schur complement formula \eqref{eq:schur} we have that
  \begin{equation}\label{eq:aGenRes11o}
    \la K_0^{-1} e_1\otimes \cstarunit, (I \otimes \cstarunit - \sum_{\gamma=1}^{\gamma_*} K_{\gamma} K_0^{-1}\otimes x_{\gamma} )^{-1} \, e_1\otimes \cstarunit \ra_{\Acc}
    =
    \frac{1}{\cstarunit - \tilde{q}(\xb) }
    ,
  \end{equation}
  and thus if we assume that $\sum_{\gamma=1}^{\gamma_*} \| K_{\gamma} K_0^{-1}\| \|x_{\gamma}\|_{\Acc} \leq 1/2$ and expand both the LHS and the RHS of \eqref{eq:aGenRes11o} into a power series with respect to $x_{\gamma}$'s, we will see that the coefficients in the expansion of $(1-\tilde{q})^{-1}$ are given by the matrix representation \eqref{eq:aMatRep}.

  On the other hand, there is a simple characterization of the minimal matrix representation of a series, which is stated in the following proposition.
  Therefore, one can use minimization of a special type of matrix representations of $(1-\tilde{q})^{-1}$ in order to construct a minimal linearization of the polynomial $\cstarunit - \tilde{q}(\xb)$.
\end{rem}
\begin{pr}[\cite{BersReutBook}, Proposition 2.1]\label{prop:min}
  Let $s=s(x_1,\ldots,x_{\gamma_*})$ be a series in noncommutative variables $x_{1},\ldots,x_{\gamma_*}$ and let $(v_1,v_2, V_1,\ldots V_{\gamma_*})$ with $v_1,v_2\in \CC^{m}$ and $V_1,\ldots,V_{\gamma_*}\in\CC^{m\times m}$ be one of its matrix representations.
  The matrix representation $(v_1,v_2, V_1,\ldots V_{\gamma_*})$ is minimal if and only if
  \begin{align}\label{eq:aMinCrit1}
    \mathrm{span} \Big(\{v_2\}\cup \bigcup_{k=1}^{\infty} \bigcup_{(\alpha_1,\ldots,\alpha_k)\in \llbr \gamma_* \rrbr^{k}} V_{\alpha_1}\cdots V_{\alpha_k} q_2 \Big)
    =
    \CC^{m}
    ,\\ \label{eq:aMinCrit2}
    \mathrm{span} \Big(\{v_1\}\cup \bigcup_{k=1}^{\infty} \bigcup_{(\alpha_1,\ldots,\alpha_k)\in \llbr \gamma_* \rrbr^{k}} V_{\alpha_1}^*\cdots V_{\alpha_k}^*q_1 \Big)
    =
    \CC^{m}
    .
  \end{align}
\end{pr}
In the next lemma we will show how to construct a linearization $\Lb$ of the form \eqref{eq:aLsum4}, such that the corresponding matrix representation of $(1-\tilde{q})^{-1}$ \eqref{eq:aMatRep} satisfies \eqref{eq:aMinCrit1}-\eqref{eq:aMinCrit2}.
This would imply that this linearization is minimal, since otherwise it would be possible to construct a minimal representation of $(1-\tilde{q})^{-1}$ with dimension smaller than minimal.
The matrices $\KK_{\gamma}$ below faithfully represent the collection of  self-adjoint matrices  $K_{\gamma}$ on a smaller space $\widetilde{U}$, which is the natural smallest space.

Before stating the next lemma let us introduce some notation that will be used to describe the minimization algorithm.
Denote by $\Icc$ the set of multi-indices
\begin{equation}\label{eq:aIcc}
  \Icc
  :=
  \{\emptyset\}\cup \bigcup_{k=1}^{\infty} \big\{(\alpha_1,\ldots \alpha_k)\in \llbr \gamma_* \rrbr^k \big\}
  .
\end{equation}
For any $k\in \NN$,  multi-index $\overline{\alpha}:=(\alpha_1, \ldots, \alpha_k)\in \llbr  \gamma_* \rrbr^k$ and a family of matrices $\{R_{\alpha} \, : \, \alpha \in \llbr  \gamma_* \rrbr \}$ we will denote
\begin{equation}\label{eq:Ralpha}
  R_{\emptyset}
  :=
  I
  ,\qquad
  R_{\overline{\alpha}}
  :=
  R_{\alpha_1}\cdots R_{\alpha_k}
  .
\end{equation}
For any two multi-indices $\overline{\alpha}\in\llbr \gamma_* \rrbr^k$ and $\overline{\beta}\in\llbr \gamma_* \rrbr^l$ we will denote by $\overline{\alpha \beta}$ the concatenation of $\overline{\alpha}$ and $\overline{\beta}$, i.e., $\overline{\alpha \beta}:= (\alpha_1, \ldots, \alpha_k,\beta_1, \ldots, \beta_l)$, and by $\overline{\alpha}^t$ the multi-index taken in the reversed order, i.e., $\overline{\alpha}^t:=(\alpha_k, \ldots, \alpha_1)$.
Finally for any multi-index $\overline{\alpha}$ of length $k$ and a linearization $\Lb$ of the form \eqref{eq:aLsum4} we will denote
\begin{equation*}
  \xi_{\overline{\alpha}}
  :=
  K_0^{-1} K_{\alpha_1} \ldots K_0^{-1} K_{\alpha_k} K_0^{-1} e_1
  .
\end{equation*}

\begin{lem}[Minimization algorithm] \label{lem:aminimization}
  Let $\tilde{q}\in \CC\la \xb \ra$ be self-adjoint such that $\tilde{q}(0)=0$ and let $\Lb=K_0\otimes \cstarunit - \sum_{\gamma=1}^{\gamma_*} K_{\gamma}\otimes x_{\gamma}$  be an arbitrary $n$-dimensional (self-adjoint) linearization of $\cstarunit - \tilde{q}(\xb)$ with $K_0$ invertible.
  Denote $A_{\gamma}:= K_{\gamma} K_{0}^{-1}$.
  \begin{enumerate}
    \setlength\itemsep{0em}      
  \item Define a subspace $U\subset \CC^n$ 
    \begin{equation}\label{eq:Udef}
      U
      :=
      \mathrm{span} \Big( \bigcup_{\overline{\alpha}\in \Icc} A_{\overline{\alpha}} \, e_1 \Big)
      \subset
      \CC^{n}
      .
    \end{equation}
  \item  Denote by $P_U:\CC^n\rightarrow U$ the orthogonal projection onto $U$ and define a subspace $\tilde{U}\subset U$ by
    \begin{equation}\label{eq:Utilde}
      \tilde{U}
      :=
      \mathrm{span} \Big( \bigcup_{\overline{\alpha}\in \Icc} (P_{U} A^* P_{U})_{\overline{\alpha}}K_0^{-1}e_1 \Big)
      \subset
      \CC^{n}
      .
    \end{equation}
    Let $m:=\mbox{dim}\; \tilde U$ be the dimension of $\tilde U$.
  \item   Choose a basis of $\tilde{U}$ in the form $ \{(P_{U} A^{ * } P_{U})_{\overline{\beta_i}}K_0^{-1}e_1, 1\leq i \leq m \}$ for some multi-indices $\overline{\beta}_{i}$ with $\overline{\beta}_1= \emptyset$.
  \item   Define $\KK_0 = (\KK_0(i,j))_{i,j=1}^m \in \CC^{m\times m}$ and $\KK_{\gamma} = (\KK_{\gamma}(i,j))_{i,j=1}^m \in \CC^{m\times m}$ by
    \begin{equation*}
      \KK_0(i,j)
      =
      \la \xi_{\overline{\beta_i}} ,  K_0 \, \xi_{\overline{\beta_j}} \ra
      ,\qquad
      \KK_{\gamma}(i,j)
      =
      \la \xi_{\overline{\beta_i}} ,  K_{\gamma} \, \xi_{\overline{\beta_j}} \ra
      .
    \end{equation*}
  \item   Take an arbitrary unitary matrix $W\in \CC^{m\times m}$ such that $    W e_1 =(\KK_0  e_1)/ \| \KK_0 \, e_1 \|_2$  and define
    \begin{equation}\label{eq:aKmin}
      \Qmin_0
      :=
      \frac{1}{ \| \KK_0 \, e_1 \|_2^2} W^* \, \KK_0 \, W
      ,\quad
      \Qmin_{\gamma}
      :=
      \frac{1}{ \| \KK_0 \, e_1 \|_2^2} W^* \, \KK_{\gamma} \, W
      .
    \end{equation}
  \end{enumerate}
  Then $\Lmin:= \Qmin_0\otimes \cstarunit - \sum_{\gamma=1}^{\gamma_*} \Qmin_{\gamma}\otimes x_{\gamma}$
  gives a minimal (self-adjoint) linearization of $\cstarunit - \tilde{q}(\xb)$.
\end{lem}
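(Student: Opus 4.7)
The strategy is to apply Proposition~\ref{prop:min}, which, together with the dictionary of Remark~\ref{rem:aAdvantage}, reduces constructing a minimal (self-adjoint) linearization of $\cstarunit - \tilde q$ to constructing a minimal matrix representation of the formal series $(1-\tilde q)^{-1}$. Writing $A_\gamma = K_\gamma K_0^{-1}$, the input linearization $\Lb$ yields, via \eqref{eq:aMatRep}, the representation $(v_1, v_2, V_1,\ldots, V_{\gamma_*}) = (K_0^{-1} e_1, e_1, A_1,\ldots, A_{\gamma_*})$ of $(1-\tilde q)^{-1}$ on $\CC^n$, which generically fails both spanning conditions \eqref{eq:aMinCrit1}--\eqref{eq:aMinCrit2}. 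The proof mirrors Steps~1--5 of the algorithm: the first two steps project onto a minimal representation, while Steps~3--5 translate it back into the form \eqref{eq:aLsum4} with the normalization $\la e_1, K_0^{-1} e_1\ra = 1$ required by Definition~\ref{def:nilpotentlin}(ii).

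For the first projection, I would observe that $U$ in \eqref{eq:Udef} is by definition the smallest $\{V_\gamma\}$-invariant subspace of $\CC^n$ containing $v_2 = e_1$. Since each $V_{\overline{\alpha}} v_2$ lies in $U$, the pairings $\la v_1, V_{\overline{\alpha}} v_2\ra = \la P_U v_1, V_{\overline{\alpha}} v_2\ra$ are unchanged if we pass to the representation $(P_U v_1, v_2, P_U V_\gamma P_U)_{\gamma}$ on $U$, which automatically satisfies condition \eqref{eq:aMinCrit1} on $U$. The analogous construction on the dual side produces $\tilde U$: the smallest $\{(P_U V_\gamma P_U)^*\}$-invariant subspace of $U$ containing $P_U v_1$. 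The orthogonal complement $\tilde U^\perp \cap U$ is then $\{P_U V_\gamma P_U\}$-invariant, which combined with $P_U v_1 \in \tilde U$ yields that the tuple projected onto $\tilde U$ still represents $(1-\tilde q)^{-1}$ and satisfies condition \eqref{eq:aMinCrit2}. The main obstacle is to verify that condition \eqref{eq:aMinCrit1} survives this second projection; this is a classical result on rational non-commutative series \cite[Chapter~2.3]{BersReut}: the minimal dimension of any matrix representation of $s$ equals the rank of the Hankel-type matrix $H = (s_{\overline{\gamma}\,\overline{\alpha}})_{\overline{\gamma}, \overline{\alpha}\in \Icc}$, and two successive projections by the right- and left-reachable subspaces already reach this rank, so both conditions must hold on $\tilde U$.

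Once a minimal matrix representation of $(1-\tilde q)^{-1}$ is in hand on $\tilde U$, the next step is to express $K_0$ and $K_\gamma$ in the coordinates provided by Step~3. The basis $\{(P_U A P_U)_{\overline{\beta_i}} K_0^{-1} e_1\}_{i=1}^m$ of $\tilde U$ (with $\overline{\beta_1} = \emptyset$) is paired against the dual family $\{\xi_{\overline{\beta_i}}\}$, and the matrices $\KK_0(i,j) = \la \xi_{\overline{\beta_i}}, K_0\, \xi_{\overline{\beta_j}}\ra$, $\KK_\gamma(i,j) = \la \xi_{\overline{\beta_i}}, K_\gamma\, \xi_{\overline{\beta_j}}\ra$ are the resulting Gram matrices. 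Self-adjointness of $\KK_0, \KK_\gamma$ follows immediately from self-adjointness of $K_0, K_\gamma$ and Hermiticity of the pairing. A direct computation using the Schur identity \eqref{eq:aGenRes11o} shows that $(\KK_0, \KK_\gamma)$ represents the same series $(1-\tilde q)^{-1}$ up to a scalar factor equal to $\la e_1, \KK_0^{-1} e_1\ra$, so that $\KK_0 \otimes \cstarunit - \sum_\gamma \KK_\gamma \otimes x_\gamma$ is already a minimal self-adjoint linearization of some scalar multiple of $\cstarunit - \tilde q$.

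Finally, Step~5 is a cosmetic adjustment enforcing the required normalization. Taking any unitary $W$ with $W e_1 = \KK_0 e_1 / \|\KK_0 e_1\|_2$ sends $\KK_0 e_1$ into a multiple of $e_1$, and the simultaneous division by $\|\KK_0 e_1\|_2^2$ then guarantees $\la e_1, \Qmin_0^{-1} e_1\ra = 1$, so that $\Lmin = \Qmin_0 \otimes \cstarunit - \sum_\gamma \Qmin_\gamma \otimes x_\gamma$ is a linearization of $\cstarunit - \tilde q$ itself rather than of a scalar multiple. Because this operation is a unitary conjugation followed by a positive rescaling, it preserves self-adjointness, invertibility of $\Qmin_0$ and minimality of the induced matrix representation; hence $\Lmin$ is a minimal self-adjoint linearization of $\cstarunit - \tilde q$.
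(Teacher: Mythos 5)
Your proposal follows essentially the same route as the paper: reduce via Remark~\ref{rem:aAdvantage} and Proposition~\ref{prop:min} to minimizing the matrix representation of $(1-\tilde q)^{-1}$, project first onto the reachable subspace $U$ and then onto the observable subspace $\tilde U$, and finally repackage the restricted operators as self-adjoint Gram matrices, rotating by $W$ and rescaling to restore the normalization $\la e_1,\Qmin_0^{-1}e_1\ra=1$. Your appeal to the Hankel-rank characterization of the minimal dimension from \cite{BersReut} in place of the paper's direct duality argument for why condition \eqref{eq:aMinCrit1} survives the second projection is a legitimate shortcut.

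There is, however, one genuine gap: you never verify that $\KK_0$ is invertible. This is not cosmetic; without it the matrices $\KK_\gamma\KK_0^{-1}$ and $\Qmin_0^{-1}$ are undefined, and the claim at the very end that the construction ``preserves invertibility of $\Qmin_0$'' presupposes exactly what is missing. The point is that $\KK_0(i,j)=\la \xi_{\overline{\beta}_i},K_0\,\xi_{\overline{\beta}_j}\ra$ is the pairing $B_L^*B_R$ between the \emph{left} family $l_{\overline{\beta}_i}=Q^*_{\overline{\beta}_i}l_\emptyset$ (chosen as a basis of $\tilde U$ in Step~3) and the \emph{right} family $r_{\overline{\beta}_i}=Q_{\overline{\beta}_i}r_\emptyset$, and it is invertible only once one shows that the right family $\{r_{\overline{\beta}_i}\}_{i=1}^m$ is itself linearly independent, hence a second basis of $\tilde U$. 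The paper proves this by a duality argument: a relation $\sum_j c_j r_{\overline{\beta}_j}=0$ pairs to zero against every $l_{\overline{\alpha}}$, and since the $\{l_{\overline{\alpha}}\}$ span $\tilde U$ while the pairing is symmetric between the two families, one deduces $\sum_j c_j l_{\overline{\beta}_j}=0$ and hence $c_j=0$. You should supply this step (or an equivalent one). A secondary, minor imprecision: the intermediate tuple $(\KK_0 e_1, e_1, \KK_\gamma\KK_0^{-1})$ is already an exact matrix representation of $(1-\tilde q)^{-1}$, not of a scalar multiple; the defect Step~5 repairs is only that the left vector is $\KK_0 e_1$ rather than the $\Qmin_0^{-1}e_1$ required by the form \eqref{eq:aMatRep}. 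This does not affect your conclusion since Step~5 corrects it either way.
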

\begin{proof}
  Let us take a matrix representation of $(1 - \tilde{q})^{-1}$ given by $(K_{0}^{-1} e_1, e_1,  A_{1}, \ldots,A_{\gamma_*})$ (see Remark~\ref{rem:aAdvantage}).
  Denote 
  \begin{equation*}
    l_{\emptyset}
    :=
    P_U K_0^{-1} e_1
    ,\quad
    r_{\emptyset}
    :=
    P_{\tilde{U}} e_1
    ,\quad
    Q_{\gamma}
    :=
    P_{\tilde{U}} A_{\gamma} P_{\tilde{U}}
    ,
  \end{equation*}
  where $P_{\tilde{U}} \, : \, \CC^{n}\rightarrow \tilde{U}$ is an orthogonal projection onto $\tilde{U}$.
  Then $(l_{\emptyset}, r_{\emptyset}, Q_1, \ldots,Q_{\gamma_*})$ also gives a matrix representation of $(1-q)^{-1}$.
  Indeed, for any $v\in \CC^{n}$ and $\overline{\alpha}\in \Icc$ we have that
  \begin{equation}\label{eq:aKR0}
    (P_{U} A P_{U})_{\overline{\alpha}} \,v
    =
    A_{\overline{\alpha}} \, P_{U} \,v
    ,\qquad
    Q^*_{\overline{\alpha}} \, v
    =
    (P_{\tilde{U}} A^* P_{\tilde{U}})_{\overline{\alpha}} \, v
    =
    (P_{U} A^* P_{U})_{\overline{\alpha}}\, P_{\tilde{U}} \, v
    ,
  \end{equation}
  where the first equality in \eqref{eq:aKR0} follows from the definition of $U$ \eqref{eq:Udef} and the fact that $A_{\gamma} ( U) \subset U$, whereas the second is due to the definition of $\tilde{U}$ \eqref{eq:Utilde} and $P_{U} A^*_{\gamma} P_{U} (\tilde{U})\subset \tilde{U}$.
  In particular, we have that
  \begin{equation}\label{eq:aKR1}
    (P_{U} A P_{U})_{\overline{\alpha}} \, e_1
    =
    A_{\overline{\alpha}} \, e_1
    ,\qquad
    Q^*_{\overline{\alpha}} \, l_{\emptyset}
    =
    (P_{U} A^* P_{U})_{\overline{\alpha}}\,  K_0^{-1} e_1
    ,
  \end{equation}
  which implies
  \begin{equation*}
    \la l_{\emptyset}, Q_{\overline{\alpha}} r_{\emptyset} \ra
    =
    \la (P_{U} A^* P_{U})_{\overline{\alpha}^t}\,  K_0^{-1} e_1 , e_1 \ra
    =
    \la K_0^{-1} e_1 , (P_{U} A P_{U})_{\overline{\alpha}} \, e_1 \ra
    =
    \la  K_0^{-1} \, e_1,  A_{\overline{\alpha}}\,e_1 \ra
    .
  \end{equation*}
  This means that for any $\overline{\alpha}\in \Icc$
  \begin{equation}\label{eq:aKR3}
    \la l_{\emptyset}, Q_{\overline{\alpha}} r_{\emptyset} \ra
    =
    \la  K_0^{-1} \, e_1,  A_{\overline{\alpha}}\,e_1 \ra
  \end{equation}
  and we conclude that $(l_{\emptyset}, r_{\emptyset}, Q_{1}, \ldots Q_{\gamma_*})$ gives a matrix representation of $(1-\tilde{q})^{-1}$.
  In other words, we can still construct a matrix representation of $(1-\tilde{q})^{-1}$ if we restrict $A_{\gamma}$ to an $m$-dimensional subspace $\tilde{U}\subset \CC^{n}$.

  Moreover, 
  \begin{equation}\label{eq:aFullSpan}
    \mathrm{span} \Big(\bigcup_{\overline{\alpha}\in \Icc}  Q_{\overline{\alpha}} \, r_{\emptyset}  \Big)
    =
    \tilde{U}
    ,\qquad
    \mathrm{span} \Big(\bigcup_{\overline{\alpha}\in \Icc}  Q^*_{\overline{\alpha}} \, l_{\emptyset}  \Big)
    =
    \tilde{U}
    .
  \end{equation}
  To see this, assume that there exists $\tilde{u}\in \tilde{U}$ such that $\tilde{u}\perp Q_{\overline{\alpha}}\, r_{\emptyset}$ for all $\overline{\alpha}\in \Icc$.
  Then using \eqref{eq:aKR0} we get that
  \begin{equation*}
    0
    =
    \la \tilde{u}, Q_{\alpha_1}\cdots Q_{\alpha_k} r_{\emptyset} \ra
    =
    \la \tilde{u}, A_{\alpha_1} \cdots A_{\alpha_k} e_1 \ra
    ,
  \end{equation*}
  and since this holds for every multi-index, this implies that $\tilde{u}\perp U$ and thus $\tilde{u}=0$.
  The second equality in \eqref{eq:aFullSpan} can be obtained similarly.
  
  Now we will show that matrices $\KK_{0}$ and $\KK_{\gamma}$ represent $Q_0|_{\tilde{U}}: \tilde{U}\rightarrow \tilde{U}$ and $Q_{\gamma}|_{\tilde{U}}: \tilde{U}\rightarrow \tilde{U}$ in a properly chosen basis. To this end,  for any multi-index $\overline{\alpha}\in \Icc$ denote
  \begin{equation*}
    l_{\overline{\alpha}}
    :=
    Q^*_{\overline{\alpha}}\,l_{\emptyset}
    ,\quad
    r_{\overline{\alpha}}
    :=
    Q_{\overline{\alpha}}\, r_{\emptyset}
    ,
  \end{equation*}
  and note that due to \eqref{eq:aKR1} $\{l_{\overline{\beta}_i}\, : \, 1\leq i \leq m \}$ gives a basis of $\tilde{U}$ for some set of multi-indices $\{\overline{\beta}_i,1\leq i \leq m \}$ with $\overline{\beta}_1=\emptyset$.
  
  Now we show that $\{r_{\overline{\beta}_i}\, : \, 1\leq i \leq m \}$ is linearly independent, hence it also forms a basis of $\tilde{U}$.
  Suppose there exist $c_1,\ldots, c_m \in \CC$ such that $\sum_{j=1}^m c_j r_{\overline{\beta}_j}=0$.
  This means, that for all $\overline{\alpha}\in \Icc$
  \begin{equation*}
    \Big\la l_{\overline{\alpha}} ,\sum_{j=1}^m c_j r_{\overline{\beta}_j}  \Big\ra
    =
    0
    .
  \end{equation*}
  Using \eqref{eq:aKR3} and the straightforward indentity  $ A^{ * }_{\overline{\alpha}} K_0^{-1}= K_0^{-1}A_{\overline{\alpha}}$ that is valid for all $\overline{\alpha}\in \Icc$, it is easy to see that  for all $\overline{\alpha}, \overline{\beta} \in \Icc$
  \begin{equation}\label{eq:aLinInd}
    0
    =
    \Big\la l_{\overline{\alpha}} ,\sum_{j=1}^m c_j r_{\overline{\beta}_j}  \Big\ra 
    =
    \sum_{j=1}^m  c_j \la l_{\overline{\alpha}}, r_{\overline{\beta}_j} \ra
    =
    \overline{\Big\la \sum_{j=1}^m c_j l_{\overline{\beta}_j} , r_{\overline{\alpha}}, \Big\ra}
    \quad
    \mbox{ for all $\overline{\alpha}\in \Icc$ }
    .
  \end{equation}
  Since $\{l_{\overline{\beta}_i}, 1\leq i \leq m \}$ is a basis of $\tilde{U}$ and $\tilde{U}$ is generated by $\{r_{\overline{\alpha}}, \overline{\alpha}\in \Icc\}$, we have from \eqref{eq:aLinInd} that $\sum_{j=1}^m c_j l_{\overline{\beta}_j}=0$, which implies that $\sum_{j=1}^m |c_j|=0$.
  This shows that $\{r_{\overline{\beta}_j},1\leq j\leq m\}$ is linearly independent and thus forms a basis of $\tilde{U}$.

  Define now $n\times m$ matrices $B_{L}$ and $B_{R}$, whose columns are the basis vectors $l_{\overline{\beta}_i}$ and $r_{\overline{\beta}_i}$ correspondingly, i.e., $B_{L}:= (l_{\overline{\beta}_i}\, : \, 1\leq i \leq m)$ and $B_{R}:= (r_{\overline{\beta}_i}\, : \, 1\leq i \leq m)$.
  Then from \eqref{eq:aKR3} we have that
  \begin{equation}\label{eq:aKB}
    \KK_0
    =
    B_{L}^{*} B_{R}
    ,\qquad
    \KK_{\gamma}
    =
    B_{L}^{*} Q_{\gamma} B_{R}
    .
  \end{equation}
  Matrix $\KK_0$ is obviously invertible from this construction, since the columns of $B_{L}$ and $B_{R}$ form two bases of $\tilde{U}$, thus $B_{R}=B_{L} T$ for some invertible $T\in \CC^{m\times m}$.
  On the other hand, since $P_{\tilde{U}}^{B}:= B_{R}(B_{L}^* B_{R})^{-1} B_{L}^*$ is a projection onto $\tilde{U}$, we have
  \begin{equation}\label{eq:aKR4}
    \la e_1, (\KK \KK_{0}^{-1})_{\overline{\alpha}}\, \KK_{0}e_1 \ra
    =
    \la   B_{L}\, e_1, P_{\tilde{U}}^{B} \,  Q_1\, P_{\tilde{U}}^{B}\cdots P_{\tilde{U}}^{B}\, Q_{\alpha_k} \, P_{\tilde{U}}^{B}\, B_{R} \, e_1 \ra
    =
    \la l_{\emptyset}, Q_{\overline{\alpha}}  r_{\emptyset} \ra
  \end{equation}
  for any $\overline{\alpha} \in \Icc$, which implies that $(\KK_{0}e_1,  e_1, \KK_{1}\KK_{0}^{-1} , \ldots,  \KK_{\gamma_*}\KK_{0}^{-1})$ is a matrix representation of $(1-\tilde{q})^{-1}$ of dimension $m$.

  In the last step we make a change of basis that allows us to replace $\KK_{0}e_1$ by $e_1$.
  By the choice of $W$ we have that $W^* \KK_0 e_1 = \| \KK_0 e_1 \|_2 e_1$, so that
  \begin{equation}\label{eq:aAlmost}
    \la  e_1 , (\KK \KK_{0}^{-1})_{\overline{\alpha}} \, \KK_0 e_1 \ra
    =
    \| \KK_0 e_1 \|_2^2 \la  e_1 ,  W^* \KK_0^{-1} W \, (W^* \KK W W^*\KK_0^{-1} W )_{\overline{\alpha}} \, e_1 \ra
    ,
  \end{equation}
  and  $\| \KK_0 e_1 \|_2^2$ will be absorbed by one of the ${\Qmin_0}^{-1}$ if we define $\Qmin_0$ and $\Qmin_{\gamma}$ via \eqref{eq:aKmin}.
  Therefore, from the definition of $\Qmin$ \eqref{eq:aKmin}, \eqref{eq:aAlmost}, \eqref{eq:aKR4} and \eqref{eq:aKR3} we obtain that
  \begin{equation}\label{eq:aDone}
    \la \Qmin_0^{-1} e_1, (\Qmin \Qmin_{0}^{-1})_{\overline{\alpha}} \, e_1 \ra
    =
    \la K_0^{-1} e_1, A_{\overline{\alpha}} \, e_1 \ra
    ,
  \end{equation}
  and we conclude that $(\Qmin_0^{-1} e_1, e_1, \Qmin_{1} \Qmin_{0}^{-1}, \ldots, \Qmin_{\gamma_*}\Qmin_{0}^{-1})$ is a matrix representation of $(1-\tilde{q})^{-1}$.

  Moreover,
  \begin{equation}\label{eq:aKminSpan}
    \mathrm{span} \Big(\bigcup_{\overline{\alpha}\in \Icc}  (\Qmin \Qmin_0^{-1})_{\overline{\alpha}} \, e_1  \Big)
    =
    \CC^{m}
    ,\qquad
    \mathrm{span} \Big(\bigcup_{\overline{\alpha}\in \Icc}  (\Qmin \Qmin_0^{-1})^*_{\overline{\alpha}} \, \Qmin_{0}^{-1}\, e_1  \Big)
    =
    \CC^{m}
    .
  \end{equation}
  Indeed, suppose that there exist $c_1,\ldots, c_m \in \CC$ satisfying $\sum_{i=1}^{m} |c_i|>0$, such that
  \begin{equation*}
    \sum_{i=1}^m c_i \, (\Qmin \Qmin_0^{-1})_{\overline{\beta}_i} \,  e_1
    =
    0
    .
  \end{equation*}
  Then for any $\overline{\alpha}\in \Icc$
  \begin{equation*}
    \Big\la  (\Qmin \Qmin_0^{-1})^*_{\overline{\alpha}} \Qmin_0^{-1} ,  \sum_{i=1}^m c_i \, (\Qmin \Qmin_0^{-1})_{\overline{\beta}_i} \,  e_1 \Big\ra
    =
    0
  \end{equation*}
  which by \eqref{eq:aDone} and \eqref{eq:aKR3}  means that $    \la  l_{\overline{\alpha}},\sum_{i=1}^m c_i \, r_{\overline{\beta}_i}\ra = 0$ and contradicts to the fact that $\{r_{\overline{\beta}_i}\, 1\leq i \leq m\}$ is a basis of $\tilde{U}$ and $\{r_{\overline{\alpha}}\,: \, \overline{\alpha}\in \Icc\}= \tilde{U}$.
  Therefore, $\{ (\Qmin \Qmin_0^{-1})_{\overline{\beta}_i} \, e_1 , 1\leq i \leq m \}$ is linearly independent,  which implies the first equality in  \eqref{eq:aKminSpan}.
  The second equality in  \eqref{eq:aKminSpan} can be shown using a similar argument.

  Now we can finish the proof of Lemma~\ref{lem:aminimization}.
  By construction matrices $\Qmin_0$ and $\Qmin_{\gamma}$ are Hermitian.
  Moreover, by \eqref{eq:aKB} and \eqref{eq:aKmin} $\Qmin_0$ is invertible and by \eqref{eq:aDone}
  \begin{equation}
    \label{eq:111}
    \la e_1, \Qmin_0^{-1} \, e_1 \ra
    =
    \la e_1, K_0^{-1} e_1 \ra = 1
    . 
  \end{equation}
  It remains to show that $\Lb_{\mathrm{m}}:=\Qmin_0 \otimes \cstarunit - \sum_{\gamma=1}^{\gamma_*} \Qmin_{\gamma}\otimes x_{\gamma}$ is minimal and satisfies 
   \eqref{eq:Lin11}.
  
  Similarly as in Remark~\ref{rem:aAdvantage}, if we assume that $\sum_{\gamma=1}^{\gamma_*} \| \Qmin_{\gamma} \Qmin_0^{-1}\| \|x_{\gamma}\|_{\Acc} \leq 1/2$, then by \eqref{eq:aDone} and \eqref{eq:aGenRes11o}
  \begin{equation}\label{eq:aFinale1}
    \Big\la  e_1 \otimes \cstarunit, \Big( \Qmin_0 \otimes \cstarunit - \sum_{\gamma=1}^{\gamma_*} \Qmin_{\gamma}  \otimes x_{\gamma}\Big)^{-1} \, e_1\otimes \cstarunit \Big\ra_{\Acc}
    =
    \frac{1}{\cstarunit - \tilde{q}(\xb)}
    .
  \end{equation}
  On the other hand, if similarly to \eqref{eq:Lexpr1} we write
  \begin{equation*}
    \Lb_{\mathrm{m}}
    =
    \begin{pmatrix}
      \lambda_{\mathrm{m}} & \ell_{\mathrm{m}}^*
      \\
      \ell_{\mathrm{m}} & \widehat{\Lb}_{\mathrm{m}}
    \end{pmatrix}
      ,
    \end{equation*}
then by the Schur complement formula
  \begin{equation*}
    \frac{1}{[\Lb_{\mathrm{m}}^{-1}]_{11}}
    =
    \lambda_{\mathrm{m}} - \ell_{\mathrm{m}}^* \widehat{\Lb}_{\mathrm{m}}^{-1} \ell_{\mathrm{m}}
    ,
  \end{equation*}
  which together with \eqref{eq:aFinale1} implies  \eqref{eq:Lin11}.

  Finally, minimality of $\Lb_{\mathrm{m}}$ follows from \eqref{eq:aKminSpan}.
  Indeed, \eqref{eq:aKminSpan} implies that
  \begin{equation*}
    \big(\Qmin_0^{-1} e_1, e_1, \Qmin_{1}\Qmin_0^{-1}, \ldots, \Qmin_{\gamma_*} \Qmin_0^{-1}\big)
  \end{equation*}
  is a matrix representation of $(1-\tilde{q})^{-1}$ of the lowest possible dimension.
  If we assume that there exist a linearization $\Lb'=K'_0\otimes \cstarunit - \sum_{\gamma=1}^{\gamma_*} K'_{\gamma} \otimes x_{\gamma}$ with dimension smaller than $m$, then
  \begin{equation*}
    \big((K'_0)^{-1} e_1, e_1, K'_{1} (K'_0)^{-1}, \ldots, K'_{\gamma_*} (K'_0)^{-1}\big)
  \end{equation*}
  would give a matrix representation of $(1-\tilde{q})^{-1}$ of dimension smaller than $m$, which would lead to a contradiction.
  We conclude that $\Lb_{\mathrm{m}}$ is a minimal linearization of $\cstarunit - \tilde{q}(\xb)$.
\end{proof}

\subsection{Numerical   comparison of two linearizations}
\label{sec:aLinComp}

In the next two tables we show how the dimension of the standard 
linearization from Appendix~\ref{sec:abiglinearization} relates to the dimension the minimal linearization for polynomials having different 
 degrees  and structures.

The first table (Figure~\ref{fig:table1}) shows how the dimensions of the two different linearizations  depend on the degree of the polynomial.
For a given degree, we generated  random samples of noncommutative polynomials  in two noncommutative variables 
by choosing the coefficients of all possible  monomials up to the given degree  independently (up to symmetry constraints) and uniformly  from an interval.

\begin{figure}[!h] 
  \centering
  \begin{tabular}{|C{1cm}|C{2cm}|C{2cm}|}
  \hline
   \mbox{degree}
  & $\gamma$  
  & $\delta$  
  \\ \hline\hline
   1 & 1 & 1
  \\ \hline
   2 & 9 & 3
  \\ \hline
   3 & 41  & 5
  \\ \hline
   4 & 137 & 9
  \\
  \hline
   5 & 393 & 13
  \\
    \hline
   6 & 1033 & 21
  \\
  \hline
   7 & 2563 & 29
  \\
  \hline
   8 & 6153 & 45
  \\
  \hline

  \end{tabular}
  \caption{For random polynomials with a given degree, 
  $\gamma$ and $\delta$ are the average dimensions of the standard and the minimal linearizations, respectively.}
  \label{fig:table1}
\end{figure}

The second table, Figure~\ref{fig:table2},  illustrates how the dimension of the minimal linearization may depend on the structure of the polynomial.
We again generated samples of  polynomials in two noncommutative variables.
This time each sample is characterized by two given numbers, the lowest and highest degree of the monomials allowed in the polynomials.
The coefficients of the monomials are given as a product of independent (up to symmetry constraints) random variable uniformly distributed on an interval and a
 0--1  Bernoulli random variable with parameter chosen is such a way that the standard linearization for all four samples have approximately the same dimension
 around 2000. In other words, the Bernoulli variable picks an appropriate subset of the all possible monomials and then we further randomize
 its coefficient. This random preselection is necessary to keep the calculation at manageable length.

\begin{figure}[!h]
  \centering
  \begin{tabular}{|C{2cm}|C{2cm}|C{2cm}|C{2cm}|}
  \hline
   \mbox{Min degree}
  & \mbox{Max degree}
  & $\gamma$ 
  & $\delta$ 
  \\ \hline\hline
   1 & 7 & 2113 & 29
  \\ \hline
   6 & 8 & 2076 & 44,9
  \\ \hline
   9 & 10 & 2082,8 & 86,2
  \\ \hline
   11 & 12 & 1930 & 150,9
  \\
  \hline
  \end{tabular}
  \caption{ For random polynomials consisting of monomials with a given minimal and maximal degree, 
  $\gamma$ and $\delta$ are the average dimensions of the standard and the minimal linearizations, respectively.}
    \label{fig:table2}
\end{figure}

Above results suggest that the minimal linearization provides a  substantial  reduction in the size of the linearization for a typical polynomial with no restriction on its
 structure, and this reduction becomes less significant if we restrict the polynomial to have only monomials of higher degrees.  
 In other words, minimal linearization is the most advantageous over the customary one if the polynomial is the sum of many monomials.
Note that randomization
 excludes the polynomials of very special structure, for example, high powers of linear combinations of noncommutative variables, which may behave very differently.

\section{Properties of semicircular noncommutative random variables}\label{sec:semicircular}

\label{sec:freeIntro}
The aim of this section is to recall some basic definitions related to the $C^*$-probability spaces and semicircular random variables that are used throughout the paper.
For a more complete introduction to the subject we refer  the  reader to \cite[Section 5]{AndeGuioZeitBook}.

\begin{defn}[$C^*$-algebra and $C^*$-probability space]
  We call $\Acc$ a \emph{(unital) $C^*$-algebra}, if
  \begin{itemize}
      \setlength{\itemsep}{0em}
  \item[(i)]   $\Acc$ is a (unital) algebra endowed with an  involution $^*$ and norm $\|\,\cdot \,\|_{\Acc}$ satisfying
  \begin{equation*}
    \|ab\|_{\Acc}\leq \|a\|_{\Acc}\|b\|_{\Acc}
    ,\quad
    \|a^* a \|_{\Acc}
    =
    \|a\|_{\Acc}^2
  \end{equation*}
  for any $a,b\in \Acc$;
  \item[(ii)] $(\Acc,\|\cdot\|_{\Acc})$ is a Banach space.
  \end{itemize}
  If $\Acc$ is a unital $C^*$-algebra and $\tau:\Acc \rightarrow \CC$ is a linear complex-valued functional such that
  \begin{equation*}
    \tau(a^*a)
    \geq
    0
    ,\quad
    \tau(\cstarunit)
    =
    1
  \end{equation*}
  for any $a\in \Acc$ and the unit element $\cstarunit\in\Acc$, then we call $(\Acc, \tau)$ a \emph{$C^*$-probability space}.
  We will always assume that the state $\tau$ is tracial ($\tau(ab)=\tau(ba)$ for all $a,b\in \Acc$) and faithful ($\tau(a^*a)=0$ implies that $a=0$).
\end{defn}

We call the elements of a $C^*$-probability space $\Acc$ \emph{non-commutative random variables}.
A family $\{a_1,\ldots,a_k\}\subset \Acc$ of non-commutative random variables is characterized by its \emph{non-commutative distribution}, a map $\mu_{a_1,\ldots,a_k}\, : \, \CC\la x_1,\ldots,x_k \ra \rightarrow \CC$ given by
\begin{equation*}
  \mu_{a_1,\ldots,a_k} (P)
  =
  \tau(P(a_1,\ldots,a_k))
  ,\quad
  P\in \CC\la x_1,\ldots,x_k \ra
  ,
\end{equation*}
where we  recall that   $\CC\la x_1,\ldots,x_k \ra$ denotes the set of (noncommutative) polynomials in $x_1,\ldots,x_k$.

A family $\Acc_1,\ldots,\Acc_n \subset \Acc$ of subalgebras of $\Acc$, each containing $\cstarunit$, is called \emph{freely independent} if
\begin{equation*}
  \tau(a_1a_2\cdots a_k)
  =
  0  
\end{equation*}
for any $(i_1,\ldots,i_k)\in \{1,\ldots,n\}^k$ and $a_1\in \Acc_{i_1},\ldots,a_k\in \Acc_{i_k}$ with $\tau(a_j)=0$ and $i_1\neq i_2, i_2\neq i_3,\ldots, i_{k-1}\neq i_k$.
Noncommutative variables $a_1,\ldots,a_n$ are freely independent if the  subalgebras generated by $a_1,\ldots,a_n$ are freely independent.

A freely independent family of noncommutative variables $\semic_1,\ldots,\semic_k $ from the $C^*$-probability space $(\Acc,\tau)$ is called a \emph{semicircular system} if $\semic_i^*=\semic_i$ and
\begin{equation} \label{eq:aSemicMoments}
  \tau(\semic_i^{j})
  =
  \left\{
    \begin{array}{ll}
      0, & j\quad \mbox{ is odd,}
      \\
      C_{\ell}, & j=2\ell, \mbox{ even,}
    \end{array}
    \right.
  \end{equation}
  where $C_{\ell}$ is the $\ell$-th Catalan number,  $C_{\ell}=\frac{1}{\ell+1} \binom{2\ell}{\ell}$.
  We will denote by $\Scc$ the unital (with unit element $\freeunit$) $C^*$-algebra generated by $\{\semic_1,\ldots,\semic_{k}\}$, and $(\Scc, \tau)$ will be the corresponding $C^*$-probability space.
  The spectrum of a semicircular element $\semic$ is equal to the interval $[-2,2]$, in particular we have that $\|\semic\|_{\Scc}=2$.

\bibliographystyle{abbrv}
\bibliography{bib}

\end{document}